 \newcommand{\subparagraph}{}
\newif\ifOneColumn
\newif\ifArxivVersion
\newif\ifNotArxivVersion
\def \myEquationStyle {\displaystyle}
\newcommand{\myFrac}[2]{\dfrac{#1}{#2}}
\def \myEquationStyle {\textstyle}
\newcommand{\myFrac}[2]{\tfrac{#1}{#2}}
\def \sspace{-2pt}
\def \ssspace{-2pt}
 \titlespacing{\section}{2pt}{-\sspace}{-\ssspace}
 \titlespacing{\subsection}{2pt}{-\sspace}{-\ssspace}
 \titlespacing{\subsubsection}{2pt}{-\sspace}{-\ssspace}
\newif\ifLemLpfNpfRelationship
\newif\ifpropGreedyOptimality
\newif\ifappendixProofs
\newif\ifgeneralApproachText
\newif\ifparameterTable
\newif\ifdetailedProofTheoremTwo
\newif\ifpropDownstream
\newif\ifpropOptimalAttackVectors
\newif\iflemOptimalAttackSetIndependentOfGamma
\newif\ifoldProofPropTypeThreeSecure 
\newif\ifsmallNotationTable
\crefname{section}{\mytextsection}{\mytextsection\mytextsection}
\Crefname{section}{Sec.}{§§}
\crefname{subsection}{\mytextsection}{\mytextsection\mytextsection}
\newtheoremstyle{theoremdd}
{5pt}
{5pt}
{\itshape}
{0pt}
{\bfseries}
{.}
{ }
{\thmname{#1}\thmnumber{ #2}\thmnote{ (#3)}}
\theoremstyle{theoremdd}
\newtheorem{theorem}{Theorem}
\newtheorem{lemma}{Lemma}
\newtheorem{claim}{Claim}
\newtheorem{proposition}{Proposition}
\newtheorem*{formulation*}{}
\newtheorem{remark}{Remark}
\newtheorem{example}{Example}
\crefname{figure}{Fig.}{Figures}
\crefname{theorem}{Theorem}{Theorems}
\crefname{proposition}{Proposition}{Propositions}
\crefname{lemma}{Lemma}{Lemmas}
\crefname{algorithm}{Algorithm}{Algorithms}
\def\thm@space@setup{%
  \thm@preskip=0.2cm plus 0.2cm minus 0.2cm
  \thm@postskip=\thm@preskip 
}
\renewenvironment{proof}[1][\proofname]{\par
  \pushQED{\qed}%
  \normalfont
  \topsep0pt \partopsep5pt 
  \trivlist
  \item[\hskip\labelsep
        \itshape
    #1\@addpunct{.}]\ignorespaces
}{%
  \popQED\endtrivlist\@endpefalse
  \addvspace{6pt plus 6pt} 
}
\newenvironment{assumption}[1][]{\setcounter{subassmcounter}{-1}\hspace{-0.2cm}\vspace{-0.15cm}\refstepcounter{assmcounter} 
\par\medskip
   \hspace{-0.4cm}\textbf{(A\theassmcounter #1)} \rmfamily}{
   }
\newenvironment{subassumption}[1][]{\refstepcounter{subassmcounter}
\def \parentCounter  {\theassmcounter}
\par\textbf{$(\textbf{A\parentCounter})_\mathbf{\thesubassmcounter}$} \rmfamily}{}
\newcommand{\assmref}[1]{\textbf{\textcolor{MidnightBlue}{(A\ref{#1})}}}
\newcommand{\assm}[1]{\textbf{\textcolor{MidnightBlue}{(A#1)}}}
\newcommand{\subassmref}[1]{\textbf{\textcolor{MidnightBlue}{$(\textbf{A0})_{\mathbf{\ref{#1}}}$}}}
\newtheoremstyle{named}{}{}{\itshape}{}{\bfseries}{.}{.5em}{\thmnote{#3's }#1}
\theoremstyle{named}
\newtheoremstyle{mynamed}{}{}{\itshape}{}{\bfseries}{.}{.5em}{#1 \thmnote{A }}
\theoremstyle{named}
\newcommand{\abs}[1]{\left\lvert{#1}\right\rvert}
\newcommand{\conj}[1]{\bar{#1}}
\pgfplotsset{compat=newest}
\algrenewcommand{\algorithmiccomment}[1]{\hskip3em$\slash\slash$ #1}
\newcommand{\lzeronorm}[1]{\left\lVert#1\right\rVert_0}
\newcommand{\lonenorm}[1]{\left\lVert#1\right\rVert_1}
\newcommand{\linfinityNorm}[1]{\left\lVert#1\right\rVert_\infty}
\DeclareMathOperator{\arccot}{arccot}
\def \emult {\small\odot}
\def \unity {\mathbf{1}_\NN}
\def \N {\mathcal{N}}
\def \E {\mathcal{E}}
\def \G {\mathcal{G}}
\def \P {\mathcal{P}}
\def \O {\mathcal{O}}
\def \Pmax {H}
\def \arcm {\mathrm{M}}
\def \R {\mathbb{R}}
\def \Z {\mathbb{Z}}
\def \C {\mathbb{C}}
\def \S {\mathcal{S}}
\def \D {\mathcal{D}}
\def \Da {\mathcal{D}_{\arcm}}
\def \Dd {\mathcal{U}_B}
\def \AD {\Dd}
\def \Re {\mathbf{Re}}
\def \Im {\mathbf{Im}}
\def \x {\mathrm{x}}
\def \j {\mathbf{j}}
\def \npf {\text{NPF}}
\def \lpf {\text{LPF}}
\def \cpf {\text{CPF}}
\def \NN {N}
\def \Nleaf {\N_{L}}
\def \Ns {\N_s}
\def \Nv {\N_v}
\def \pc { pc}
\def \qc { qc}
\def \pcdem { \mathrm{pc}^\mathrm{nom}}
\def \qcdem { \mathrm{qc}^\mathrm{nom}}
\def \scdem { \mathrm{sc}^\mathrm{nom}}
\def \W {W}
\def \c {C}
\def \pg { pg}
\def \qg { qg}
\def \sgset  {\mathrm{sp}}
\def \sgseta {\sgset^\mathrm{a}}
\def \sgsetd {\sgset^\mathrm{d}}
\def \sgmax {\overline{ \sgset}}
\def \sc { sc}
\def \sg { sg}
\def \sn { s}
\def \Vmax {\overline{V}}
\def \Vmin {\underline{V}}
\def \nuMax {\overline{\nu}}
\def \nuMin {\underline{\nu}}
\def \muMax {\overline{\mu}}
\def \muMin {\underline{\mu}}
\def \y { y}
\def \gammaMin {\underline{\gamma}}
\def \k {K}
\def \kmin {\underline{K}}
\def \kmax {\overline{K}}
\def \slr {\epsilon_0}
\def \slrrr {\epsilon}
\def \slovr {{\footnotesize\mathrm{VR}}}
\def \svoll {{\footnotesize\mathrm{LC}}}
\def \sll {{\footnotesize\mathrm{LL}}}
\def \Loss {\mathrm{L}}
\def \Losslpf {\widehat{\mathrm{L}}}
\def \Lossupf {\widecheck{\mathrm{L}}}
\def \llovr {\Loss_\slovr}
\def \lvoll {\Loss_\svoll}
\def \lloss {\Loss_\sll}
\def \lsupply {\Loss_\mathrm{S}}
\def \ldamage {\Loss_\mathrm{D}}
\def \lcurt {\Loss_\mathrm{AC}}
\def \rr {R}
\def \xx {X}
\def \zz {Z}
\def \nl {\Pmax}
\def \child {\mathcal{N}^c}
\def \level {h}
\def \l {\mathcal{L}}
\def \a {\delta}
\def \aa {\a}
\def \aastar {\a^{\star}}
\def \aastarlpf {\widehat{\a}^{\star}}
\def \aanew {\widetilde{\a}}
\def \u {u}
\def \ulpf {\widehat{\u}}
\def \ad {\u}
\def \adstar {\u^{\star}}
\def \adstarlpf {\widehat{\u}^{\star}}
\def \addash {\widetilde{\u}}
\def \adnew {\widetilde{\u}}
\def \ghm{\G^I}
\def \ght{\G^H}
\def \Fnpf {\mathcal{F}}
\def \X {\mathcal{X}}
\def \Xnpf {\mathcal{X}}
\def \Xlpf {\widehat{\mathcal{X}}}
\def \Xupf {\widecheck{\mathcal{X}}}
\def \Xcpf {\mathcal{X}_\cpf}
\def \Lnpf {\mathcal{L}}
\def \Llpf {\widehat{\mathcal{L}}}
\def \Lupf {\widecheck{\mathcal{L}}}
\def \Lunpf {\mathcal{L}^u}
\def \Lulpf {\widehat{\mathcal{L}}^u}
\def \Luupf {\widecheck{\mathcal{L}}^u}
\def \dad {\mathrm{DAD}}
\def \dadnpf {[\dad]}
\def \dadlpf {[\widehat{\dad}]}
\def \dadupf {[\widecheck{\dad}]}
\def \adnpf {[\mathrm{AD}]}
\def \adlpf {[\widehat{\mathrm{AD}}]}
\def \adupf {[\widecheck{\mathrm{AD}}]}
\def \mp {\mathrm{a}}
\def \sp {\mathrm{d}}
\def \admpnpf {[\mathrm{AD}]^\mp}
\def \admplpf {[\widehat{\mathrm{AD}}]^\mp}
\def \admpupf {[\widecheck{\mathrm{AD}}]^\mp}
\def \adspnpf {[\text{AD}]^\sp}
\def \adsplpf {[\widehat{\mathrm{AD}}]^\sp}
\def \adspupf {[\widecheck{\mathrm{AD}}]^\sp}
\def \adspcpf {[\widetilde{\mathrm{AD}}]^\sp}
\def \psilpf {\widehat{\psi}}
\def \psiupf {\widecheck{\psi}}
\def \psistarnpf {\psi^\star}
\def \psistarlpf {\widehat{\psi}^\star}
\def \psistarupf {\widecheck{\psi}^\star}
\def \Psistarlpf {\widehat{\Psi}^\star_\arcm}
\def \Psistarupf {\widecheck{\Psi}^\star_\arcm}
\def \psistarcpf {\widetilde{\psi}^\star}
\def \psicpf {\widetilde{\psi}}
\def \philpf {\widehat{\phi}}
\def \phiupf {\widecheck{\phi}}
\def \phistarnpf {\phi^\star}
\def \phistarlpf {\widehat{\phi}^\star}
\def \phistarupf {\widecheck{\phi}^\star}
\def \phistarcpf {\widetilde{\phi}^\star}
\def \phicpf {\widetilde{\phi}}
\def \nunpf {\nu}
\def \nulpf {\widehat{\nu}}
\def \nuupf {\widecheck{\nu}}
\def \nucpf {\widetilde{\nu}}
\def \nustarcpf {\widetilde{\nu}^\star}
\def \Snpf {S}
\def \Slpf {\widehat{S}}
\def \Supf {\widecheck{S}}
\def \Scpf {\widetilde{S}}
\def \Sstarcpf {\widetilde{S}^\star}
\def \Pnpf {P}
\def \Plpf {\widehat{P}}
\def \Qnpf {Q}
\def \Qlpf {\widehat{Q}}
\def \xnpf {\x}
\def \xlpf {\widehat{\x}}
\def \xupf {\widecheck{\x}}
\def \xcpf {\widetilde{\x}}
\def \ellnpf {\ell}
\def \elllpf {\widehat{\ell}}
\def \ellupf {\widecheck{\ell}}
\def \ellcpf {\widetilde{\ell}}
\def \ellstarcpf {\widetilde{\ell}^\star}
\def \ellstarnpf {{\ell}^\star}
\def \sgsetnpf {\sgset}
\def \sgsetlpf {\widehat{\sgset}}
\def \sgsetupf {\widecheck{\sgset}}
\def \sgsetdstarlpf {\sgsetlpf^{\mathrm{d}\star}}
\def \sgsetdstarupf {\sgsetupf^{\mathrm{d}\star}}
\def \pgnpf {\pg}
\def \pglpf {\widehat{\pg}}
\def \sgsetanpf {\sgset^\mathrm{a}}
\def \sgsetalpf {\widehat{\sgset}^\mathrm{a}}
\def \sgsetaupf {\widecheck{\sgset}^\mathrm{a}}
\def \sgsetanew {\widetilde{\sgset}^\mathrm{a}}
\def \sgsetastarnpf{\sgset^\mathrm{a\star}}
\def \sgsetacpf{\widetilde{\sgset}^\mathrm{a}}
\def \sgsetdnpf {\sgset^\mathrm{d}}
\def \sgsetdlpf {\widehat{\sgset}^\mathrm{d}}
\def \sgsetdupf {\widecheck{\sgset}^\mathrm{d}}
\def \gammastarnpf {\gamma^\star}
\def \gammastarlpf {\widehat{\gamma}^\star}
\def \scnpf {\sc}
\def \sclpf {\widehat{\sc}}
\def \scupf {\widecheck{\sc}}
\def \sgnpf {\sg}
\def \sglpf {\widehat{\sg}}
\def \sgupf {\widecheck{\sg}}
\def \gammanpf {\gamma}
\def \gammalpf {\widehat{\gamma}}
\def \gammaupf {\widecheck{\gamma}}
\def \f {f}
\def \fstar {f^\star}
\def \fnew {{f'}}
\def \fcpf {\widetilde{f}}
\def \xstarnpf {\xnpf^{\star}}
\def \ssum {\textstyle\sum}
\def \pprod {\textstyle\prod}
\def \mmax {\textstyle\max}
\def \mmin {\textstyle\min}
\def \aargmax {\textstyle\argmax}
\def \aargmin {\textstyle\argmin}
\def \UPF {\epsilon\text{-LPF}}
\def \Ni {\N^i}
\def \m {m}
\def \mi {\m^i}
\def \Mi {M^i}
\def \gi {g^i}
\def \J {J}
\def \aalpf {\widehat{\aa}}
\def \aaupf {\widecheck{\aa}}
\def \aailpf {\aalpf^i}
\def \aaklpf {\aalpf^k}
\def \aaiupf {\aaupf^i}
\def \aaistarlpf {\aalpf^{i\star}}
\def \Dim {\D_{\arcm}^i}
\def \Dimlpf {\widehat{\D}_{\arcm}^i} 
\def \Dimupf {\widecheck{\D}_{\arcm}^i}
\def \Dastarlpf{\widehat{\D}_{\arcm}^{\star}}
\def \Dastarupf{\widecheck{\D}_{\arcm}^{\star}}
\def \rxratio {\sfrac{\normalsize\textbf{r}}{\normalsize\textbf{x}}}
\def \wcratio {\sfrac{\normalsize\mathbf{\W}}{\normalsize\mathbf{\c}}}
\def \mytextsection {\textsection\hspace{-0.0cm}}
\def \securePreceq {\preccurlyeq}
\def \sgsetdstar {\sgset^{\mathrm{d}\star}}
\def \phistar {\phi^\star}
\def \aastarnew {\widetilde{\aa}^\star}
\def \phistarnew {\widetilde{\phi}^\star}
\def \Lunpfnew {\Lnpf^{\adnew}}
\def \Lulpfnew {\Llpf^{\adnew}}
\def \adstaronelpf {\ad^{\star1}}
\def \adstartwolpf {\ad^{\star2}}
\newcommand{\myComment}[1]{\Comment{\begin{footnotesize}
#1
\end{footnotesize}}}
\newcommand{\argmin}{\operatornamewithlimits{argmin}}
\newcommand{\argmax}{\operatornamewithlimits{argmax}}
\titleformat{\subsubsection}[runin]
  {\normalfont\normalsize\bfseries\itshape}{\thesubsubsection)}{2pt}{}[:]
\newcommand{\tcb}[1]{\color{black} #1 \color{black}}
\title{Security Assessment of Electricity Distribution Networks under DER Node Compromises}
\author{Devendra Shelar and Saurabh Amin
\thanks{Manuscript submitted on August 3, 2016}
\thanks{Mailing address: Department
of Civil and Environmental Engineering, Massachusetts Institute of Technology, 77 Massachusetts Avenue 1-241, Cambridge, MA 02139 USA (e-mail: \texttt{shelard,amins}@mit.edu, phone: 857-253-8964).}
\thanks{This work was supported by EPRI grant for ``Modeling the Impact of ICT Failures on the Resilience of Electric Distribution Systems'' (contract ID: 10000621), and NSF project ``FORCES'' (award $\#$: CNS-1239054).}
}
\begin{document}
\maketitle

\begin{abstract}
This article focuses on the security assessment of electricity Distribution Networks (DNs) with vulnerable Distributed Energy Resource (DER) nodes. The adversary model is simultaneous compromise of DER nodes by strategic manipulation of generation set-points. The loss to the defender (DN operator) includes loss of voltage regulation and cost of induced load control under supply-demand mismatch caused by the attack. A 3-stage Defender-Attacker-Defender (DAD) game is formulated: in Stage 1, the defender chooses a security strategy to secure a subset of DER nodes; in Stage 2, the attacker compromises a set of vulnerable DERs and injects false generation set-points; in Stage 3, the defender responds by controlling loads and uncompromised DERs. Solving this trilevel optimization problem is hard due to nonlinear power flows and mixed-integer decision variables. To address this challenge, the problem is approximated by a tractable formulation based on linear power flows. The set of critical DER nodes and the set-point manipulations characterizing the optimal attack strategy are computed. An iterative greedy approach to compute attacker-defender strategies for the original nonlinear problem is proposed. These results provide guidelines for optimal security investment and defender response in pre- and post-attack conditions, respectively.
 
\end{abstract}

\IEEEpeerreviewmaketitle

\section{Introduction}
Integration of distributed energy resources (DERs) such as solar photovoltaic (PV) and solar thermal power generation with electricity distribution networks (DNs) is a major aspect of smart grid development. Some reports estimate that, by 2050, solar PVs will contribute up to $23.7\;\%$ of the total electricity generation in the US. Large-scale deployment of DERs can be utilized to improve grid reliability, reduce dependence on bulk generators (especially, during peak demand), and decrease network losses (at least, up to a certain penetration level)~\cite{hiskens}. Harnessing these capabilities requires secure and reliable operation of cyber-physical components such as smart inverters, DER controllers, and communication network between DERs and remote control centers. Thus, reducing security risks is a crucial aspect of the design and operation of DNs~\cite{tabuada, sinopoli, bulloDetectionAndIdentification,  karlJohansson, karlJohanssonSandberg}. This article focuses on the problem of security assessment of DNs under threats of DER node disruptions by a malicious adversary.  
 
We are specifically interested in limiting the loss of voltage regulation and supply-demand mismatch that can result from the simultaneous compromise of multiple DERs nodes on a distribution feeder. It is well known that the active power curtailment and reactive power control are two desirable capabilities that can help maintain the  operational requirements in DNs with large-scale penetration of DERs with intermittent nature~\cite{hiskens, garcia, turitsyn}. We investigate the specific ways in which these capabilities need to be built into the DER deployment designs, and show that properly chosen security strategies can protect DNs against a class of security attacks.
 
Our work is motivated by recent progress in three topics: \textbf{(T1)} Interdiction and cascading failure analysis of power grids (especially, transmission networks)~\cite{baldick, salmeron,yaoTrilevelOptimization}; \textbf{(T2)} Cyber-physical security of networked control systems~\cite{cardenas, tabuada, sinopoli, bulloDetectionAndIdentification, karlJohansson,zhangReviewer2}; and \textbf{(T3)} Optimal power flow (OPF) and control of distribution networks with DERs~\cite{low, hiskens, turitsyn}.
 
Existing work in \textbf{(T1)} employs state-of-the-art computational methods for solving large-scale, mixed integer programs for interdiction/cascade analysis of transmission networks assuming direct-current (DC) power flow models. Since our focus is on security assessment of DNs, we also need to model reactive power demand, in addition to the active power flows. In this work, we consider standard DN model with constant power loads and DERs~\cite{low, turitsyn}, but we restrict our attention to  tree networks. This enables us to obtain structural results on optimal attack strategies. We show that these structural results also provide guidelines for investment in deploying IT security solutions, especially in geographically diverse DNs.

The adversary model in this paper considers simultaneous DER node compromises by false-data injection attacks. Thanks to the recent progress in \textbf{(T2)}, similar models have been proposed for a range of cyber-physical systems~\cite{sinopoli, bulloDetectionAndIdentification}. Our model is motivated by the DER failure scenarios proposed by power system security experts~\cite{nescor}. These scenarios consider shutdown of DER systems when an external threat agent compromises the DERs by a direct attack, or by manipulating the power generation set-points sent from the control center to individual DER nodes/controllers; see \cref{fig:IEEE37NodeNetwork1}. \tcb{Indeed, the security threats to DNs are real. The recent cyber attack on Ukraine's power grid shows that an external attacker can compromise multiple DN components by exploiting commonly known IT vulnerabilities~\cite{ukraine}. Another real-world attack that is directly related to the attack model introduced in this paper was highlighted in a 2015 Congressional Research Service report~\cite{richardCampbell}. This attack was conducted by computer hackers to obtain a back-door entry to the grid. They exploited the IT systems that enable integration of DERs/renewable energy sources. }

\ifOneColumn
\def \myWidth {12cm}
\else
\def \myWidth {7cm}
\fi 
\begin{figure}[h!]
\includegraphics[width=7cm]{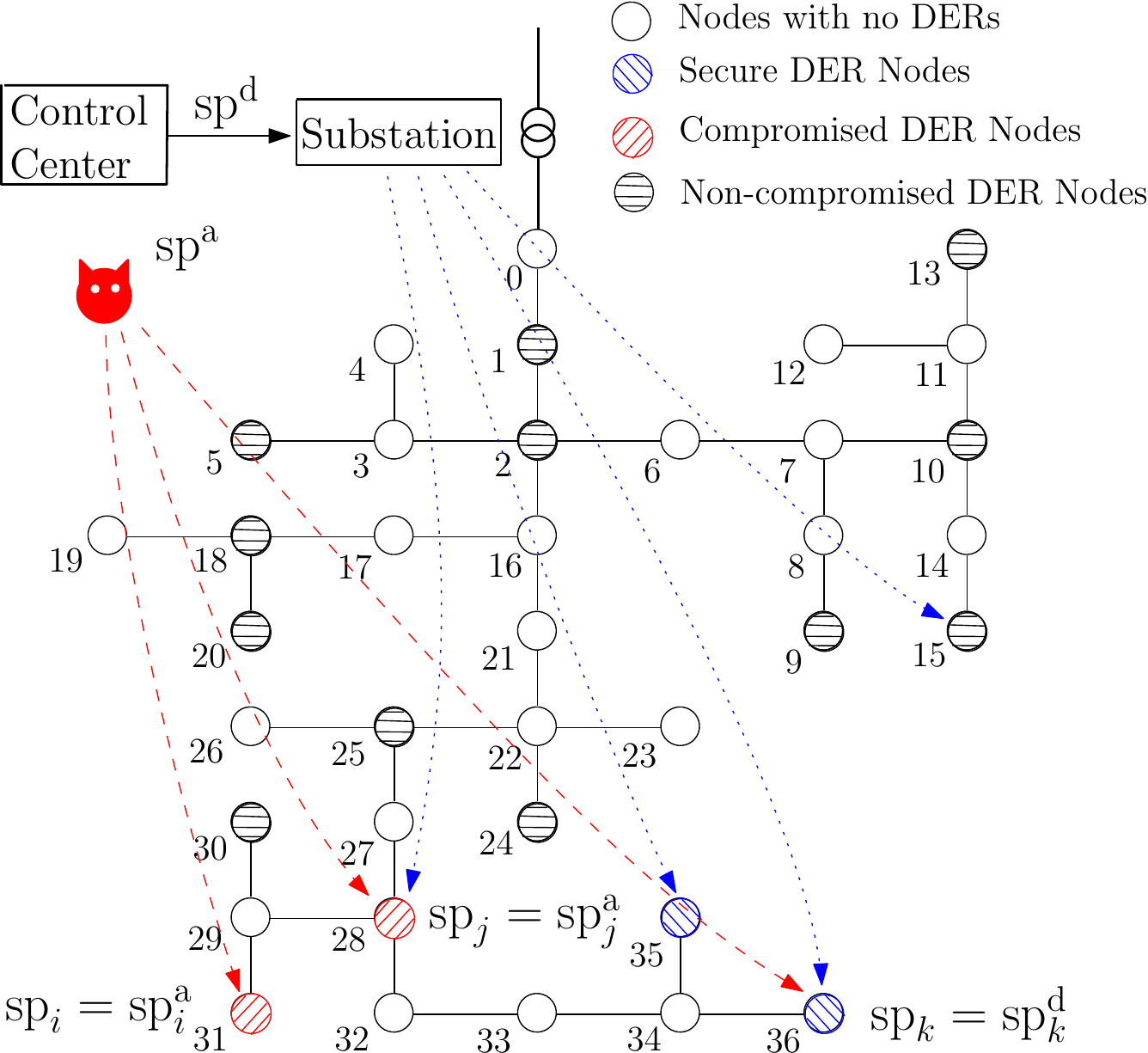}
\caption{Illustration of the DER failure scenario proposed in~\cite{nescor} on a modified IEEE 37-node network. }
\label{fig:IEEE37NodeNetwork1}
\end{figure}

%
%
In our model, the attacker's objective is to impose loss of voltage regulation to the defender (i.e., network operator), and also induce him to exercise load control in order to reduce the supply-demand mismatch immediately after the attack. The defender's primary concern in post-attack conditions is to reduce the costs due to loss of voltage regulation and load control. Hence, in our model, the line losses are assigned a relatively lesser weight. For a fixed attack, solving for a defender response via load control and control of uncompromised DERs is similar to the recent results in \textbf{(T3)}, i.e., using convex relaxations of the OPF problem.  

Our main contribution is analysis of a three-stage sequential security game posed in \cref{sec:modeling}. In Stage~1, the defender invests in securing a subset of DER nodes but cannot ensure security of all nodes due to  his  budget constraint; in Stage~2, the resource-constrained attacker compromises a subset of vulnerable DER nodes and manipulates their set-points; in Stage~3, the defender responds by regulating the supply-demand mismatch. This defender-attacker-defender (DAD) game models both strategic investment decisions (Stage 1) and operation of DN during attacker-defender interaction (Stages 2-3). Solving the  DAD game is a hard problem due to the nonlinear power flow, DER constraints, and mixed-integer decision variables.
 
In \Cref{sec:sequentialGameLPF}, we provide tractable approximations of the sub-game induced for a fixed defender security strategy, i.e., the attacker-defender interaction in Stages~2-3; see \cref{thm:lpfNpfUpfRelationship}. These approximations can be efficiently solved, and hold under the assumption of no reverse power flows, small impedances, and small line losses. Next we show structural results for the master-problem (i.e., optimal attack for fixed defender response), and the sub-problem (i.e., optimal defender response for fixed attack). For the master-problem, we derive the false set-points that the attacker will introduce in any compromised DER (\Cref{thm:attackerSetpoints}),  and also propose computational methods to solve for attack vectors, i.e., DER nodes whose compromise will cause maximum loss to the defender (\cref{prop:greedyOptimality,prop:downstream}). For the sub-problem, we utilize the convex relaxations of OPF to compute optimal defender response for a fixed attack (\cref{lem:convexRelaxationOptimalSolution}), and under a restricted set of conditions, provide a range of new set-points for the uncompromised DERs (\cref{prop:pvSetpoints}). These results lead to a greedy approach, which efficiently computes the optimal attack and defender response (\cref{algo:greedyApproach}).  We prove optimality of the greedy approach for DNs with identical resistance-to-reactance ratio (\cref{thm:greedyApproach}), and show that the approach efficiently obtains optimal attack strategy and defender response for a broad range of conditions (\cref{sec:caseStudy}).  Thanks to the structural results on optimal attack strategy, our greedy approach has significantly better computational performance than the standard techniques to solve bilevel optimization problems (e.g., Bender’s decomposition~\cite{salmeron}). Finally, we provide a characterization of the optimal security strategy for Stage 1 decision by the defender, albeit for symmetric DNs (\cref{sec:securityDesign}, \cref{thm:optimalSecureDesign}).

\tcb{In the following, the reader should note that the proofs of \cref{lem:lpfAndUpf,lem:lpfNpfRelationship,lem:convexRelaxationOptimalSolution,lem:pvCompromiseEffects,lem:optimalAttackSetIndependentOfGamma}, \Cref{prop:allPowerFlows,prop:pvSetpoints,prop:greedyOptimality,prop:downstream,prop:generalApproach,prop:complexityOfDadlpf} and 
\cref{thm:optimalSecureDesign} are provided in the online supplementary material~\cite{shelaraminOnlineSupplementary}. }

\section{Problem Formulation}
\label{sec:modeling}
\subsection{Distribution network model}

We summarize the standard network model of radial electric distribution systems~\cite{turitsyn,baran,shelaramin}. Consider a tree network of nodes and distribution lines $\G = (\N \cup \{0\},\E)$, where $\N$ denotes the set of all nodes except the substation (labeled as node $0$), and let $\NN \coloneqq \abs{\N}$. Let $V_i \in \C$ denote the complex voltage at node $i$, and $\nu_i \coloneqq\abs{V_i}^2$ denote the square of voltage magnitude. We assume that the magnitude of substation voltage $\abs{V_0}$ is constant.  Let $I_{j} \in \C$ denote the current flowing from node $i$ to node $j$ on line $(i,j)\in\E$, and $\ell_{j} \coloneqq \abs{I_{j}}^2$ the square of the magnitude of the current. A distribution line $(i,j)\in\E$ has a complex impedance $z_{j} = r_{j} +\j x_{j}$, where $r_{j} > 0$ and $x_{j} > 0$ denote the resistance and inductance of the line $(i,j)$, respectively, and $\j = \sqrt{-1}$. 

The voltage regulation requirements of the DN under \emph{nominal} no attack conditions govern that:
\begin{equation}\myEquationStyle
\label{eq:voltageConstraint}
\forall\quad i\in\N,\quad \nuMin_i \le \nu_i \le \nuMax_i,
\end{equation}
where $\nuMin_i = \abs{\Vmin_i}^2$ and $\nuMax_i = \abs{\Vmax_i}^2$ are the \emph{soft} lower and upper bounds for maintaining voltage quality at node~$i$. Additionally, voltage magnitudes under \emph{all} conditions satisfy:
\begin{equation}
\label{eq:hardLowerVoltageBound}
\forall\quad i\in\N,\quad \muMin \le \nu_i \le \muMax,
\end{equation}
where $\muMin$ and $\muMax$ are the \emph{hard} voltage safety bounds for any nodal voltage, and $0 < \muMin < \min_{i\in\N}\nuMin_i \le   \max_{i\in\N}\nuMax_i < \muMax$. 

\subsubsection{Load model} We consider constant power loads~\cite{farivar}. \footnote{\tcb{We do not consider frequency dependent loads as our analysis is limited to attacks that do not cause disturbances in system frequency; see \cref{subsec:assumptions} for our justification of constant system frequency assumption.} } Let $\sc_i \coloneqq \pc_i+\j\qc_i$ denote the power consumed by a load at node $i$, where $\pc_i$ and $\qc_i$ are the real and reactive components. Let  $\scdem_i \coloneqq \pcdem_i+\j\qcdem_i$ denote the \emph{nominal} power demanded by a node $i$, where $\pcdem_i$ and $\qcdem_i$ are the real and reactive components of $\scdem_i$. Under our assumptions, for all $i\in\N$, $\pc_i \le \pcdem_i$ and $\qc_i \le \qcdem_i$, i.e., the actual power consumed at each node is upper bounded by the nominal demand:
\begin{equation}\myEquationStyle
\label{eq:maxLoad} \forall\quad i\in\N,\quad \sc_i \le \scdem_i.
\end{equation}

\subsubsection{DER model}\footnote{We use the term DER to denote the complete DER-inverter assembly attached to a node of DN.} Let $\sg_i \coloneqq \pg_i + \j \qg_i$ denote the power generated by the DER connected to node $i$, where $\pg_i$ and $\qg_i$ denote the active and reactive power, respectively. Following ~\cite{low},~\cite{turitsyn}, $\sg_i$ is bounded by the apparent power capability of the inverter, which is a given constant $\sgmax_i$. 
We denote the DER set-point by $\sgset_i =\Re(\sgset_i) + \j \Im(\sgset_i)$, where $\Re(\sgset_i)$ and $\Im(\sgset_i)$ are the real and reactive components. The power generated at each node is constrained as follows: 
\begin{equation}\myEquationStyle
\forall\quad i\in\N, \quad \sg_i \le \sgset_i \in \S_i,  \label{eq:dgConstraint1}
\end{equation}
where $\S_i \coloneqq \{\sgset_i \in \C \quad|\quad \Re(\sgset_i) \ge 0 \text{ and } \abs{\sgset_i} \le \sgmax_i\}$. $\S \coloneqq  \prod_{i\in\N}\S_i$ denotes the set of configurable set-points. 

We denote  the net power consumed at node~$i$ by $\sn_i \coloneqq \sc_i - \sg_i$. A DN can be fully specified by the tuple $\langle\G, \abs{V_0}, z, \scdem, \sgmax\rangle$, where $z, \scdem, \sgmax$ are row vectors of appropriate dimensions, and are assumed to be constant.

 


\subsubsection{Power flow equations}
The 3-phase balanced nonlinear power flow (NPF) on line $(i,j)\in\E$ is given by~\cite{baran}: 
\begin{subequations}
\label{eq:powerFlowEquations}
\begin{alignat}{2}\myEquationStyle
\label{eq:NPFconservation} \Snpf_{j} &=\myEquationStyle \ssum_{k:(j,k)
\in\E}\Snpf_{k} +\scnpf_j - \sgnpf_j + z_{j}\ellnpf_{j} \\
\label{eq:voltageSquare2}\myEquationStyle \nunpf_j &=\myEquationStyle \nunpf_i - 2\Re(\conj{z}_{j}S_{j})+ \abs{z_{j}}^2\ellnpf_{j}\\
\myEquationStyle \ellnpf_{j} &=\myEquationStyle \frac{\abs{\Snpf_{j}}^2}{\nunpf_i},\label{eq:currentMagnitudeNpf} 
\end{alignat}
\label{eq:NPF}
\end{subequations}
\noindent where $\Snpf_{j} = \Pnpf_{j} + \j \Qnpf_{j}$ denotes the complex power flowing from node $i$ to node $j$  on line $(i,j)\in \E$, and $\bar{z}$ is the complex conjugate of $z$;  \eqref{eq:NPFconservation} is the power  conservation equation; \eqref{eq:voltageSquare2} relates the voltage drop and the power flows; and ~\eqref{eq:currentMagnitudeNpf} is the current-voltage-power relationship. For the NPF model  \eqref{eq:NPF}, we define a state as follows: 
\begin{equation*}
\label{eq:stateVector}
\xnpf \coloneqq \begin{bmatrix}\nunpf,\ellnpf,\scnpf,\sgnpf,\Snpf\end{bmatrix},
\end{equation*}
where $\xnpf \in \R_{+}^{2\NN}\times\C^{3\NN}$, and $\nunpf$, $\ellnpf$, $\scnpf$, $\sgnpf$, and $\Snpf$ are row vectors of appropriate dimensions. Let $\Fnpf$ denote the set of all states $\xnpf$ that satisfy \eqref{eq:hardLowerVoltageBound}, \eqref{eq:maxLoad}, \eqref{eq:dgConstraint1} and the NPF model \eqref{eq:NPF}, and define the set of all states with \emph{no reverse power flows} (see \cref{subsec:assumptions} for additional assumptions) as follows:
\begin{equation*}
\Xnpf \coloneqq \{\xnpf \in \Fnpf | \Snpf \ge 0\}.
\end{equation*}

The linear power flow (LPF) approximation of \eqref{eq:NPF} is:
\begin{subequations}\vspace*{-0.05cm}
\begin{alignat}{2}\myEquationStyle
 \myEquationStyle \Slpf_{j} &= \myEquationStyle \ssum_{k:(j,k)\in\E} \Slpf_{k} +\sclpf_j - \sglpf_j \label{eq:LPFConservation} \\
 \myEquationStyle \nulpf_j &= \myEquationStyle \nulpf_i - 2\Re(\conj{z}_{j}\Slpf_{j}) \label{eq:linearVoltageSquare2}\\
\myEquationStyle \elllpf_{j} &= \myEquationStyle  \frac{\abs{\Slpf_{j}}^2}{\nulpf_i},\label{eq:currentMagnitudeLpf} 
\end{alignat}\label{eq:LPF}
\end{subequations}where $\xlpf \coloneqq [\nulpf, \elllpf, \sclpf, \sglpf,\Slpf]$ is a state of the LPF model, and analogous to the NPF model, define the set of LPF states $\xlpf$ with no reverse power flows as $\Xlpf$.

\subsection{Notation and definitions}
\label{subsec:notationDefinitions}
All vectors are row vectors, unless otherwise stated. For two vectors $c$ and $d$, $c\ \emult\ d$ denotes their Hadamard product. 

Let $K_{j} \coloneqq \myFrac{r_{j}}{x_{j}}$ be the resistance-to-reactance ($\rxratio$) ratio for line $(i,j)\in\E$, and let $\underline{K}$ and $\overline{K}$ denote the minimum and maximum of the $K_{j}$s over all $(i,j) \in \E$. We say that DERs at nodes $j$ and $k$ are homogeneous with respect to each other if their  set-point configurations as well as their apparent power capabilities are identical, i.e., $\sgset_j = \sgset_k$ and $\sgmax_j = \sgmax_k$. Similarly, two loads at nodes $j$ and $k$ are homogeneous if $\scdem_j = \scdem_k$.

\begin{figure}
\centering
\resizebox{!}{2.5cm}{
\begin{tikzpicture}
	\tikzstyle{every node}=[font=\small]
	\tikzstyle{fde}=[circle,draw]
\def\y{0pt}
\def\yy{-30pt}
\def\zy{-10pt}
\def\x{40pt}
\node (0)[fde,draw,xshift=0pt,yshift=0*\y] {$0$};
\node (a)[fde,draw,xshift=\x,yshift=\y] {$a$};
\node (b)[fde,draw,xshift=2*\x,yshift=2*\y] {$b$};
\node (c)[fde,draw,xshift=3*\x,yshift=3*\y] {$c$};
\node (i)[fde,draw,xshift=4*\x,yshift=4*\y] {$i$};
\node (m)[fde,draw,xshift=5*\x,yshift=5*\y] {$m$};
\node (e)[fde,draw,xshift=2*\x,yshift=\yy] {$e$};
\node (f)[fde,draw,xshift=3*\x,yshift=\yy] {$d$};
\node (k)[fde,draw,xshift=4*\x,yshift=\yy] {$k$};

\node (g)[fde,draw,xshift=2*\x,yshift=-\yy] {$g$};
\node (j)[fde,draw,xshift=3*\x,yshift=-\yy] {$j$};

\path (b) edge (e);
\path (a) edge (g);\path (g) edge (j);
\path (b) edge (f);\path (f) edge (k);
\path (0) edge (a);\path (a) edge (b);\path (b) edge (c);\path (c) edge (i);\path (i) edge (m);


\draw [red] plot [smooth cycle] coordinates {(-5pt,10pt) (35pt,13pt) (75pt,42pt) (125pt,42pt) (135pt,30pt) (125pt,20pt) (85pt,20pt) (45pt,-10pt) (-5pt,-10pt) (-15pt,0pt)};
\draw [blue,dashed] plot [smooth cycle] coordinates {(0pt,10pt) (160pt,10pt)  (160pt,-10pt) (0pt,-10pt) };
\end{tikzpicture}}
\caption{Precedence description of the nodes for a tree network. Here, $j\prec_i k$, $e =_i k$, $b\prec k$, $\P_j = \{a,g,j\}$, $\P_i \cap \P_j = \{a \}$.} 
\label{fig:precedenceDescription}
\end{figure}
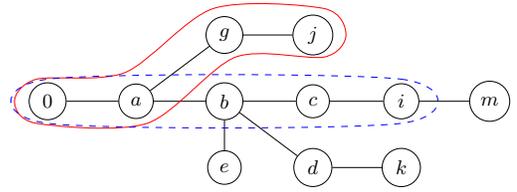

For any given node $i\in \N$, let $\P_i$ be the path from the root node to node $i$. Thus, $\P_i$ is an ordered set of nodes starting from the root node and ending at node~$i$, excluding the root node; see \cref{fig:precedenceDescription}. 
We say that node~$j$ is an \emph{ancestor} of node~$k$ ($j\prec k$), or equivalently, $k$ is a successor of $j$  \textit{iff} $\P_j \subset \P_k$. We define the \emph{relative ordering} $\preceq_i$, with respect to a ``pivot" node $i$ as follows:
\begin{itemize}
\item[-] $j$ \textit{precedes} $k$ ($ j\preceq_i k$) \textit{iff} $\P_i \cap \P_j \subseteq \P_i\cap \P_k$.

\item[-] $j$ \textit{strictly precedes} $k$ ($ j\prec_i k$) \textit{iff} $\P_i \cap \P_j \subset \P_i\cap \P_k$.

\item[-] $j$ is at the \textit{same precedence level} as $k$ ($j =_i k$) \textit{iff} \\
$\P_i \cap \P_j = \P_i \cap \P_k $.
\end{itemize} 


We define the common path impedance between any two nodes $i, j \in \N$ as the sum of impedances of the lines in the intersection of paths $\P_i$ and $\P_j$, i.e., $\zz_{ij} \coloneqq \ssum_{k\in \P_i\cap\P_j} z_{k}$, and denote  the resistive (real) and inductive (imaginary) components of $\zz_{ij}$ by 
$\rr_{ij}$ and $\xx_{ij}$, respectively. 
 
Finally, we define some useful terminology for the tree network $\G$. Let $\nl$ denote the height of $\G$, and let $\N_{\level}$ denote the set of nodes on level~$\level$ for $h = 1,2,\cdots,\nl$. For any node~$i\in\N$, $\level_i$ denotes the level of node $i$; $\child_i$ the set of children nodes of node~$i$;  $\Lambda_i$ the set of nodes in the subtree rooted at node $i$;  $\Lambda_i^j$ the set of nodes in the subtree rooted at node $i$ until level $\level_j$, where $j\in\Lambda_i$; $\Nleaf$ the set of leaf nodes, i.e., $\Nleaf \coloneqq \{j\in\N\ |\ \not\exists\ k\in\N \text{ s.t. } (j,k) \in\E \} $.

\subsection{Defender-Attacker-Defender security game}
\label{subsec:attackerDefenderGame}

%
We consider a 3-stage sequential game between a defender (network operator) and an attacker (external threat agent). 
\begin{itemize}
\item[-] \textbf{Stage 1: } The defender chooses a security strategy\\
 $u\in\Dd$ to secure a subset of DERs;

\item[-]\textbf{Stage 2: } The attacker chooses from the set of DERs that were not secured by the defender in Stage 1, and manipulates their set-points according to a strategy \\
$\psi \coloneqq  \begin{bmatrix}
\sgseta,\delta
\end{bmatrix}\in\Psi_{\arcm}(\ad)$;

\item[-]\textbf{Stage 3: } The defender responds by choosing the set-points of the uncompromised DERs and, if possible, impose load control at one or more nodes according to a strategy~
$\phi \coloneqq \begin{bmatrix}
\sgsetd, \gamma 
\end{bmatrix}\in\Phi(\ad,\psi)$. 
\end{itemize} 

\tcb{The $\dadnpf$ game is a sequential game of perfect information, i.e. each player is perfectly informed about the actions that have been chosen by the previous players. The equilibrium concept is the classical Stackelberg equilibrium. }  

In this game, $\AD$ and $\Phi(\ad,\psi)$ denote the set of defender actions in Stage 1 and 3, respectively; and $\Psi_{\arcm}(\ad)$ denotes the set of attacker strategies in Stage 2. Formally, the defender-attacker-defender $\dadnpf$ game is as follows:
\begin{equation}\label{eq:dadnpf} 
\hspace{-0.3cm}\dadnpf\;\Lnpf \coloneqq  \mmin_{\ad\in\AD} \; \mmax_{\psi\in\Psi_{\arcm}} \;  \mmin_{\phi\in\Phi} \;   \Loss(\x(\ad,\psi,\phi))  
\end{equation}
\vspace{-0.3cm}
\begin{subequations}\label{eq:dadGameConstraints}
\begin{alignat}{8}\myEquationStyle \text{s.t. }\quad &\xnpf(\ad,\psi,\phi) &&\in \X \label{eq:npfDomainConstraint}\\
 &\sc(\ad,\psi,\phi) &&= \gamma \emult \scdem \label{eq:directLoadControl}\\
&\sg(\ad,\psi,\phi) &&= \ad \emult \sgsetd + (\textbf{1}_\NN - \ad) \nonumber \\
&&&\quad \emult [\delta\emult\sgseta + (\mathbf{1}_\NN - \delta)\emult\sgsetd], \label{eq:pvSetpoints} 
\end{alignat}
\end{subequations} where \eqref{eq:directLoadControl} specifies that the \emph{actual power consumed} at node~$i$ is equal to the power demand scaled by the corresponding load control parameter $\gamma_i\in[\gammaMin_i, 1]$ chosen by the defender. 

The constraint \eqref{eq:pvSetpoints} models the net effect of defender choice $\ad_i$ in Stage 1, the attacker choice $(\sgseta_i,\delta_i)$ in Stage 2, and the defender choice $\sgsetd_i$ in Stage 3 on the \emph{actual power generated} at node~$i$. Thus, \eqref{eq:pvSetpoints} is the \emph{adversary model} of $\dadnpf$ game: the DER~$i$ is compromised \emph{if and only if} it was not secured by the defender ($\ad_i = 0$) \emph{and} was targeted by the attacker ($\aa_i=1$). Specifically, if $i$ is compromised, $\sgset_i = \sgseta_i$, where $\sgseta_i = \Re(\sgseta_i) + \j \Im(\sgseta_i)$ is the false set-point chosen by the attacker. The set-points of non-compromised DERs are governed by the defender, i.e., if DER~$i$ is not compromised $\sgset_i = \sgsetd_i$. 

\tcb{Note that the physical restriction~\eqref{eq:dgConstraint1} applies to all DER nodes, including the compromised ones. If the attacker's set-point violates this constraint, it will not be admitted by the inverter as a valid set-point. Such an attack will not affect the attack model~\eqref{eq:pvSetpoints}, and consequently it will not change the actual power generated by the DER.} Also, our adversary model assumes that the DERs' power output, $\sg$, quickly attain the set-points specified by \eqref{eq:pvSetpoints}. Thus we do not consider dynamic set-point  tracking. \footnote{Note that, under this adversary model, the impact of DER compromise is different than the impact of a natural event, e.g. cloud cover, during which $\pg = \textbf{0}$. The reactive power contribution may be non-negative during a natural event; however, as we show in \Cref{subsec:fixedDefenderResponse}, a compromised DER contributes reactive power equal to the negative of apparent power capability. }   

\tcb{During the nominal operating conditions, the network operator minimizes the line losses due to power flow on the distribution lines ($\lloss$). Typical OPF formulations mainly account for this cost. However, this objective function is not representative of the loss incurred by operator (defender) during the aforementioned attack on the DN.} We define loss function in $\dadnpf$ as follows:
\begin{equation}\label{eq:lossFunction}
\Loss(\xnpf(\ad, \psi, \phi)) \coloneqq  \llovr(\xnpf) + \lvoll(\xnpf) + \lloss(\xnpf),
\end{equation}
where $\llovr(\x)$ and $\lvoll(\x)$ model the \emph{monetary cost} to the defender due to the loss in voltage regulation and the cost of load curtailment/shedding (i.e., loss due to partially satisfied demand), respectively. The term denotes $\lloss$ the total line losses. These costs are defined as follows:
\begin{subequations}\label{eq:lossFunctionDefinitions}
\begin{alignat}{8}\myEquationStyle
\label{eq:lovrDefinition}
\llovr(\xnpf) &\coloneqq \linfinityNorm{W \emult (\nuMin - \nu)_+}\\
\label{eq:vollDefinition}
\lvoll(\xnpf) &\coloneqq  \lonenorm{\c\emult \left(1-\gamma\right) \emult \pcdem}\\
\lloss(\xnpf) &\coloneqq \lonenorm{r\emult\ellnpf},
\end{alignat}
\end{subequations} 
where $\W,\c \in \R^\NN_+$. The weight $\W_i$ is the cost of unit voltage bound violation and $\c_i$ is the cost of shedding unit load (or demand dissatisfaction) at node $i$, and $r$ denotes the vector of resistances. Note that $\llovr$ is the maximum of the weighted non-negative difference between the lower bound $\nuMin_i$ and nodal voltage square~$\nu_i$. We expect that during the attack, the defender's primary concern will be to satisfy the voltage regulation requirements, and minimize the inconvenience to the customers due to load curtailment. Thus, we assume that the weights $\W_i$ and $\c_i$ are chosen such that $\lloss$ is relatively small compared to $\llovr$ and $\lvoll$. 

Note that we added the $\lloss(\x)$ term in~\eqref{eq:lossFunction} primarily to ensure that the loss function $\Loss(\x)$ remains strictly convex function of the net demand $\sn = \sc - \sg$.  The strict convexity allows us to have a unique solution for the inner problem for fixed attacker's actions. In our computational study in \cref{sec:caseStudy}, we choose the weights $W$ and $C$ such that the line loss is negligible compared to $\llovr$ and $\lvoll$. 

\tcb{However, more generally, the loss function $\Loss(\x)$ should reflect the monetary costs incurred by the defender in maintaining the supply-demand balance and in restoring the safe operating conditions after the attack. Such a general model will contain following terms: (a) the cost of supplying additional power from the substation node to match the difference between actual power consumed by the loads and the effective DER generation ($\lsupply(\x)$); (b) the cost due to the loss of voltage regulation ($\llovr(\x)$); (c) the cost of  curtailing or shedding certain loads ($\lvoll(\x)$); (d) the cost of reactive power (VAR) control and the cost of energy spillage for the uncompromised DERs ($\lcurt(\x)$); and (e) the costs of equipment damage due to the attack ($\ldamage(\x)$).} 

\tcb{For the sake of simplicity, we do not consider $\lcurt(\x)$ and $\ldamage(\x)$ in our formulation. The choice of ignoring $\lcurt(\x)$ can be justified if we assume that the DER owners participate in VAR control, perhaps in return of a pre-specified compensation by the operator/defender. Alternatively, the DERs may be required to contribute reactive power during contingency scenarios (i.e., supply-demand mismatch during the attack). The main difficulty in modeling $\ldamage(\x)$ is that it requires relating the state vector to the probability of equipment failures. Since our focus is on security assessment of DNs, as  opposed to network reinforcement using investment in physical protection devices, we ignore this cost in our analysis. Finally, we also ignore the contribution of $\lsupply(\x)$ to the loss function, as it is likely to be dominated by $\llovr$ and $\lvoll$. }

\subsubsection*{Stage 1 [Security Investment]} The set of defender actions is: 
\begin{equation*}\myEquationStyle
\Dd \coloneqq \{\ad \in \{0,1\}^\N \ |\ \lzeronorm{u} \le B\},
\end{equation*} 
where $B \le \abs{\N}$ denotes a security budget. Since, securing control-center's communication to every DER node in a geographically diverse DN might be costly/impractical, we impose that the maximum number of nodes the defender can secure is $B$. A defender's choice  $\ad \in \Dd$ implies that a DER at node~$i$ is secure if $\ad_i = 1$ (i.e. DER at node~$i$ cannot be compromised), and vulnerable to attack if $\ad_i = 0$. Let $\Ns(\ad) \coloneqq \{i\in\N |\ad_i = 1\}$ and $\Nv(\ad) \coloneqq \N\backslash\Ns(\ad)$ denote the set of secure and vulnerable nodes, for a given $\ad$. \footnote{Note that by a ``secure" node, we mean that the DER at that node is not prone to compromise by the attacker. From a practical viewpoint, the defender can secure a DER node by investing in node security solutions such as intrusion prevention systems (IPS) \cite{cardenas}. These security solutions are complementary to the device hardening technologies that can secure the DER-inverter assembly. Our focus is on security against a threat agent interested in simultaneously compromising multiple DERs. Thus, we restrict our attention to node security solutions. }

\tcb{There are several factors which limit the defender's ability to ensure full security of DERs. First, to ensure the security of control software and network communications that support DER operations, we need cost-effective and interoperable security solutions that can be widely adopted by different entities (e.g., DER manufacturers, service providers, and owners). Secondly, the DNs are likely to inherit some of the vulnerabilities of COTS IT devices that may directly or indirectly affect DER operations. Third, the defenders (operators) need to justify the business case to deploy security solutions. Existing work on security investments in such networked environments, indicates that the operators tend to underestimate security risks~\cite{aminGalina}. Consequently, in the absence of proper regulatory impositions, they tend to underinvest in well-known security solutions. In our model, we capture the limitations imposed by these factors by introducing a security budget $B$ which restricts the maximum number of nodes the defender can secure in Stage~$1$.}


\subsubsection*{Stage 2 [Attack]} Let $\Psi_{\arcm}(\ad) \coloneqq \S(u) \times \Da(u)$ denotes the set of attacker actions for a defender's choice $\ad$, where 
\begin{align*}
\begin{aligned}\myEquationStyle
\S(u) &\coloneqq \pprod_{i\in\Nv(u)} \S_i \times \pprod_{j\in\Ns(u)} \{0+0\j \}\\
\Da(u) &\coloneqq \{\aa\in \{0,1\}^\N \ | \ \delta \le \unity-\ad, \;  \lzeronorm{\aa} \le \arcm \},
\end{aligned}
\end{align*}
and $\arcm \le \abs{\Nv}$ is the maximum number of DERs that the attacker can compromise. This limit accounts for the attacker's resource constraints (and/or restrict his influence based on his knowledge of DER vulnerabilities). The attacker \emph{simultaneously} compromises a subset of vulnerable DER nodes by introducing incorrect set-points (see the adversary model \eqref{eq:pvSetpoints}), and increase the loss $\Loss$ (see \eqref{eq:lossFunction}). The attacker's choice is denoted by $\psi \coloneqq \begin{bmatrix}
\sgseta, \aa
\end{bmatrix} \in \Psi_{\arcm}(u)$, where $\sgseta$ denotes the vector of  incorrect set-points chosen by the attacker, and $\aa\in \Da $ denotes the attack vector that indicates the subset of DERs compromised. A DER at node~$i$ is compromised if $\aa_i = 1$, and not compromised if $\aa_i=0$. 

\tcb{We assume that the attacker has full information about the DN, i.e., she knows $\langle\G, \abs{V_0}, z, \scdem, \sgset\rangle$ and maximum fraction of controllable load at each node. The attacker also knows the set of DERs secured by the defender in Stage 1 of the game, voltage regulation bounds, and defender's cost parameters (i.e. the weight $W_i$ for voltage bound violation and the cost of unit load shedding $C_i$ for each node~$i$). By assuming such an \emph{informed attacker}, we are able to focus on how the attacker uses the knowledge of the physical system toward achieving her objective. Thus, we take a conservative approach and do not explicitly consider particular mechanisms of how a security vulnerability might be exploited by the attacker. Admittedly, our attack model may be unrealistic in some scenarios; however, it allows us to identify the critical DER nodes, and characterize optimal security investment and defender response; see \Cref{sec:securityDesign}. }

\tcb{Next, we justify the attacker's resource constraint~$\arcm$. First, the DERs are likely to be heterogeneous in their capacity, design, and manufacturer type. The attacker may not have the specific knowledge to exploit vulnerabilities in all DER systems deployed on a DN. Secondly, in practice, the process of DER integration is gradual and so is the progress on implementing security solutions in the  control processes that support DER operations. The attacker's capability to compromise DERs depends on how the available threat channels vary which such a technological change. Third, the security of DNs is likely to be affected by the security practices adopted by owners of DERs. For example, the attacker's capability will be limited if the DER operations are secured by a regulated distribution utility who faces compliance checks or mandatory disclosure of known incidents. In contrast, he is more likely to gain a backdoor entry if the DN has substantial participation from a variety of third party DER owners who may not follow prudent security practices. In our analysis, we model the attacker's capability by introducing a parameter $\arcm$, which is the maximum number of DERs that the attacker can compromise. }

\subsubsection*{Stage 3 [Defender Response]} Let $\gammaMin_i \ge 0$  denote the maximum permissible fraction of load control at node~$i$, and define the set of Stage~3 defender actions:  \begin{equation*}
\Phi(\ad,\psi) \coloneqq \S \times \Gamma,  
\end{equation*}
where $\myEquationStyle
\Gamma \coloneqq \prod_{i\in\N}[\gammaMin_i,1]$. The defender chooses new set-points $\sgsetd$ of non-compromised DERs, and load control parameters $\gamma_i$ to reduce the loss $\Loss$. The defender action is modeled as a vector $\phi \coloneqq \begin{bmatrix}
\sgsetd, \gamma
\end{bmatrix} \in \Phi(\ad,\psi)$, where $\sgsetd$ (resp. $\gamma$) denotes the vector of $\sgsetd_i$ (resp. $\gamma_i$). 

\tcb{We make the standard assumption that the defender knows the nominal demand (i.e., the demand in pre-attack conditions) using measurements collected from the DN nodes. We also assume that the defender can distinguish between compromised and non-compromised DERs. In heavy loading conditions, the defender expects the output of a non-compromised DER to lie in the first quadrant (see \cref{fig:voltVarControl} in \cref{subsec:fixedDefenderResponse}), i.e.  it contributes positive active and reactive power to the DN. A simple technique to detect compromised DERs is whether the inverter output lies in the fourth quadrant.}

\subsection{Assumptions about the DN model}\label{subsec:assumptions}

In general, $\dadnpf$ is a non-convex, non-linear,  tri-level optimization problem with mixed-integer decision variables. Hence, it is a computationally hard problem. Our goals are: \begin{itemize}
\item[(i)]  to provide structural insights about the optimal attacker and defender strategies of the $\dadnpf$ game; 
\item[(ii)]  to approximate the non-linear (hard) problem by formulating computationally tractable variants based on linear power flow models. 
\end{itemize} To address these goals we make the following assumptions:
\begin{subassumption}\label{assm:voltageLimits}
\textbf{Voltage quality: } In no attack (nominal)  conditions, both $\Xnpf$ and $\Xlpf$ satisfy the voltage quality  bounds~\eqref{eq:voltageConstraint}. 
\end{subassumption} 
\begin{subassumption}\label{assm:hardLowerBoundOfVoltage}
\textbf{Safety: } Safety bounds \eqref{eq:hardLowerVoltageBound} are always satisfied,  i.e., \small $\forall\; (\ad,\psi,\phi) \in \Dd\times\Psi\times\Phi$,  $\forall\; \xnpf(\ad,\psi,\phi)\in\Xnpf$, $\muMin\unity \le \nu \le \muMax\unity$. \normalsize
\end{subassumption} 
\begin{subassumption}\label{assm:noReversePowerFlows}\textbf{No reverse power flows: } Power flows from node $0$ towards the downstream nodes, i.e., $\Slpf \ge 0$. This implies that $\forall\;\xlpf\in\Xlpf, \; \nulpf \le \nu_0\unity$; similarly, for NPF model. 
\end{subassumption} 
\begin{subassumption}
\label{assm:lowImpedance}\textbf{Small impedance: } All power flows are in the per unit (\textit{p.u.}) system, i.e., $\nu_0 = 1$ and $\forall\;(i,j)\in\E,\;\abs{S_j} < 1$. Furthermore, the resistances and reactances are small, i.e., 
\begin{equation*}
 \forall\  (i,j)\in \E, r_{j} \le \myFrac{\muMin^2}{4\muMin+8} < 1,\ x_{j} \le \myFrac{\muMin^2}{4\muMin+8}< 1, 
\end{equation*}
and the common path resistances and reactances are also smaller than 1, i.e., $\rr_{ii} \le 1 \text{ and } \xx_{ii} \le 1 \ \forall\ i\in\N$. 
\end{subassumption}
\begin{subassumption}\label{assm:smallLineLosses}
\textbf{Small line losses: } The line losses are very small compared to power flows, i.e., $\forall\;\xnpf\in\Xnpf,\;z\emult\ell \le \slr S$, where $\slr$ is a small positive number. \footnote{Equivalently, $\slr$ is an upper bound on the maximum ratio of the magnitudes of line losses and the power flows, i.e., $\slr =  \max_{(i,j) \in \E,\Pnpf_j \ne 0,\Qnpf_j \ne  0}\max\big(\sfrac{r_j\ellnpf_j}{\Pnpf_j}, \sfrac{x_j\ellnpf_j}{\Qnpf_j}\big)$. 
\tcb{Thus, $\slr$ can be determined by setting the values of  loads to the corresponding nominal demands, and then computing the line losses and power flows for nominal conditions.}}
\end{subassumption}

\subassmref{assm:voltageLimits}-\subassmref{assm:hardLowerBoundOfVoltage} are standard assumptions. \subassmref{assm:noReversePowerFlows} assumes that the  DER penetration level is such that the net demand is always positive. In real-world DNs, both $r_j$s and $x_j$s are typically around 0.01 \subassmref{assm:lowImpedance}. Also, residential load power factors  ($\sfrac{\pc_j}{\abs{\sc_j}}$) are in range of  0.88-0.95. For these values, one can show that $\slr \approx 0.05$~\subassmref{assm:smallLineLosses}. We will denote \subassmref{assm:voltageLimits}-\subassmref{assm:smallLineLosses} by \hspace{0.5cm} \mbox{\setcounter{assmcounter}{-1}\begin{assumption}\label{assm:throughout}.\end{assumption}} 
 


%

\tcb{In addition to the aforementioned assumption, we also assume that \emph{(a)} the node $0$ is an infinite bus; \emph{(b)}  the voltage $\nu_0$ is constant, and \emph{(c)} the system frequency is constant. 

These assumptions are standard in the steady state power flow analyses, and can be justified as follows: Our focus is on the security assessment of DNs that have substation nodes with high enough ramp rates in supplying $\sim 50\;MW$ power (typical for medium-voltage (MV) substations). That is,  any supply-demand imbalance of the order of $50\;MW$ can be cleared relatively quickly by the substation; hence the infinite substation bus assumption.} 

\tcb{The assumption \emph{(b)} is typical in OPF formulations and we make it for the sake of mathematical convenience. Indeed, as a consequence of attack, there will be a net reduction in the substation voltage relative to the pre-attack value $\nu_0$. This effect is due to a higher net demand after the Stage 3 of the game. To meet this  additional demand, higher currents will flow through the distribution lines, resulting in even higher drops in the nodal voltages than what we obtained using the computational approach detailed in \Cref{subsec:fixedDefenderResponse}. Thus, our estimate of the optimal loss is actually a lower bound on the true value of optimal loss that the defender would face when the substation voltage drops after the attack.}

\tcb{To justify assumption \emph{(c)}, we argue that even large-scale penetration of DERs is not likely to achieve a generation capacity beyond $50\;MW$ from a single DN. Even in the worst case, i.e. when all the DERs are simultaneously disconnected, their impact on the system frequency will be negligible. } 
%

Next, we choose $\slrrr$ as follows
\begin{equation}
\slrrr \coloneqq (1-\slr)^{-\Pmax} - 1,
\end{equation}
where $H$ is the height of the tree DN and $\slr$ is chosen as above. Now, consider another linear power flow model (which we call the $\UPF$ model): 
\begin{subequations}
\begin{alignat}{2}\myEquationStyle
\label{eq:UPFConservation} \Supf_{j} &= \ssum_{k:(j,k)\in\E}\Supf_{k} + (1+\slrrr)(\scupf_j - \sgupf_j) \\
\label{eq:linearVoltageSquareUpf}\nuupf_j &= \nuupf_i - 2\Re(\conj{z}_{j}\Supf_{j})\\
\ellupf_{j} &= \myFrac{\abs{\Supf_{j}}^2}{\nuupf_i},\label{eq:currentMagnitudeUpf} 
\end{alignat}\label{eq:UPF}
\end{subequations} 
\noindent and $\myEquationStyle\xupf \coloneqq \begin{bmatrix}
\nuupf, \ellupf, \scupf, \sgupf, \Supf\end{bmatrix}$ is a state of $\UPF$ model, and $\Xupf$ is the set of all states $\xupf$ with no reverse power flows.  (Note that for $\slrrr = 0$, \eqref{eq:UPF} becomes \eqref{eq:LPF}.)

\tcb{We also note that both $\lpf$ and $\UPF$ models ignore the line losses term $z_j\ell_j$ in the power balance equation (5a),  and the term $\abs{z_j}^2\ell_j$ in the  voltage drop equation (5b). The power flows obtained by ignoring these terms approximate the non-linear power flow (NPF) model calculations under the assumption $(\mathbf{A0})_3$, i.e., the line impedances are very small $\abs{z_j} \ll 1$. Under the assumption $(\mathbf{A0})_2$, i.e. no reverse power flows, the $\lpf$ provides a lower bound on the line power flows, and an upper bound on the nodal voltages of the standard DistFlow model~\cite{farivar},\cite{low}. The main use of $\UPF$ model is that it provides an \emph{upper bound} on the line power flows and a lower bound on the nodal voltages; see \cref{prop:allPowerFlows} in \Cref{subsec:upperAndLowerBoundsOnLnpf}.}

We will consider two variants of the $\dadnpf$ game \eqref{eq:dadnpf}-\eqref{eq:dadGameConstraints}: 
\begin{subequations}
\begin{equation*}
\dadlpf\;\Llpf \coloneqq  \mmin_{\ad\in\AD} \; \mmax_{\psi\in\Psi_{\arcm}} \;  \mmin_{\phi\in\Phi} \;   \Losslpf(\xlpf(\ad,\psi,\phi)) \label{eq:dadlpf}
\end{equation*}
\vspace{-0.3cm}
\begin{align*}\label{eq:lpfDomainConstraint}
\begin{aligned}\myEquationStyle \text{s.t. }\quad &\xlpf(\ad,\psi,\phi) \in \Xlpf, \eqref{eq:directLoadControl}, \eqref{eq:pvSetpoints},
\end{aligned}
\end{align*}
\end{subequations}
and
\begin{subequations}
\begin{equation*}
\color{black}\dadupf\;\Lupf \coloneqq  \mmin_{\ad\in\AD} \; \mmax_{\psi\in\Psi_{\arcm}} \;  \mmin_{\phi\in\Phi} \;   \Lossupf(\xupf(\ad,\psi,\phi)) \label{eq:dadupf}
\end{equation*}
\vspace{-0.3cm}
\begin{align*}\label{eq:upfDomainConstraint}
\begin{aligned}\myEquationStyle \text{s.t. }\quad &\xupf(\ad,\psi,\phi) \in \Xupf, \eqref{eq:directLoadControl}, \eqref{eq:pvSetpoints}, 
\end{aligned}
\end{align*}
\end{subequations} \normalsize
where $\Losslpf(\xlpf) \coloneqq \llovr(\xlpf) + \lvoll(\xlpf)$, and $\Lossupf(\xupf) \coloneqq \llovr(\xupf) + \lvoll(\xupf)$ are the loss functions for $\dadlpf$ and $\dadupf$, respectively. Note that the loss functions $\Losslpf$ and $\Lossupf$ do not have the line losses term. 
\normalsize
The optimal loss $\Loss$ of  $\dadlpf$ and $\dadupf$ are denoted by  $\Llpf$ and $\Lupf$, respectively. Our results in \mytextsection\ref{sec:sequentialGameLPF}-\ref{sec:securityDesign} show that \textbf{(a)} $\dadlpf$ (resp. $\dadupf$) help  provide under (resp. over) approximation of $\dadnpf$; and \textbf{(b)} the derivation of structural properties of optimal strategies in both $\dadlpf$ and $\dadupf$ is analogous to one another.

\tcb{We will, henceforth, abuse the notation, and use $\Psi$ and $\Phi$ to denote $\Psi_{\arcm}(\ad)$ and $\Phi(\ad,\psi)$, respectively. For a summary of notations, see \Cref{tab:notationsTable} in the Appendix. }

\section{Attacker-Defender Sub-game}
\label{sec:sequentialGameLPF}

In this section, we consider the sub-game (Stages~2 and~3) induced by a fixed defender security strategy $\ad$ in Stage 1:  
\begin{align*}
\begin{aligned}\myEquationStyle
\adnpf \quad \Lunpf\  \coloneqq\  &\text{max}_{\psi\in\Psi} \  \text{min}_{\phi\in\Phi} \ \Loss(\x(\ad, \psi,\phi)) \quad \text{s.t. }\quad  \eqref{eq:dadGameConstraints} \end{aligned}
\end{align*}
\normalsize
Analogous to the variants of $\dadnpf$, $\dadlpf$ and $\dadupf$, we define two variants of the sub-game $\adnpf$:  $\adlpf$ (resp. $\adupf$) with $\Xlpf$ (resp. $\Xupf$) in \eqref{eq:npfDomainConstraint}. The optimal losses of $\adlpf$ and $\adupf$ are denoted by $\Lulpf$ and  $\Luupf$), respectively. 

For simplicity and \emph{without loss of generality}, we focus on case for $\ad = \textbf{0}$; i.e., no node is secured by the defender in Stage~1. With further abuse of notation, for a  strategy profile $(\textbf{0}, \psi, \phi)$, we denote $\xnpf(\textbf{0}, \psi, \phi)$ by $\xnpf(\psi,\phi)$ as the solution of NPF model. Similarly, redefine $\xlpf(\psi,\phi)$ and $\xupf(\psi,\phi)$. We also drop the superscript~$u$ from $\Lunpf$, $\Lulpf$ and $\Luupf$. 
 
Following the computational approach in the literature to solve (bilevel) interdiction problems  \cite{baldick}, \cite{kevinwood}, we define the master-problem $\admpnpf$ (resp. sub-problem $\adspnpf$) for fixed $\phi \in \Phi$ (resp. fixed $\psi \in \Psi$): 
\begin{align*}
\begin{aligned}\myEquationStyle
&\admpnpf \quad  \psistarnpf(\phi) &&\in  \aargmax_{\psi\in\Psi} &&\Loss(\xnpf(\psi,\phi)) \quad &&\text{s.t.}\quad \eqref{eq:dadGameConstraints},\\
&\adspnpf \quad \phistarnpf(\psi) &&\in  \aargmin_{\phi\in\Phi} &&\Loss(\xnpf(\psi,\phi))\quad &&\text{s.t.}\quad \eqref{eq:dadGameConstraints}.  
\end{aligned}
\end{align*}
Similarly, define master- and sub- problems $\admplpf$ and $\adsplpf$ (resp. $\admpupf$ and $\adspupf$)  for the variants $\adlpf$ (resp. $\adupf$). 

\Cref{subsec:upperAndLowerBoundsOnLnpf} focuses on bounding the optimal loss for $\adnpf$ with the losses in $\adlpf$ and $\adupf$. The master- and sub- problems are addressed in \cref{subsec:fixedAttackerStrategy} and \cref{subsec:fixedDefenderResponse}, respectively. This leads to a computationally efficient iterative approach in \cref{subsec:greedy} to solve the sub-games $\adnpf$, $\adlpf$, $\adupf$.  \cref{fig:technicalResultsFlowChart} provides an  outline of results in this section. 

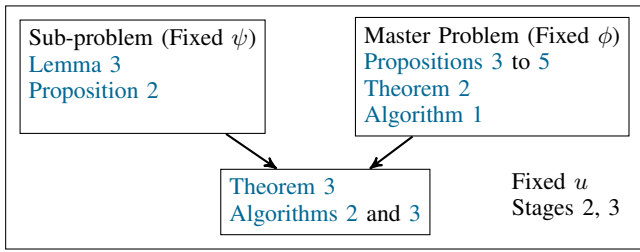
\begin{figure}[h]
\centering
\resizebox{!}{!}{
\begin{tikzpicture}
	\tikzstyle{every node}=[font=\small]
	\tikzstyle{fde}=[circle,draw]
	\tikzset{
	    >=stealth',
	    punkt/.style={
	           rectangle,
	           rounded corners,
	           draw=black, very thick,
	           text width=6.5em,
	           minimum height=2em,
	           text centered},
	    pil/.style={
	           ->,
	           thick,
}
	}
\def\y{0pt}
\def\yy{-30pt}
\def\zy{-10pt}
\def\x{40pt}

\node[draw,text depth = 3cm,minimum width=8.5cm,font=\normalsize,align=left] (main){};

\node[draw,align=left](three) at ([xshift=5.2em,yshift=-2.7em]main.north west){ Sub-problem (Fixed $\psi$) \\ 
 \cref{lem:convexRelaxationOptimalSolution} \\ \cref{prop:pvSetpoints} \\ };
 
\node[draw,align = left](two) at ([xshift=-5.8em,yshift=-2.7em]main.north east){Master Problem (Fixed $\phi$) \\ \cref{prop:greedyOptimality,prop:downstream,prop:generalApproach} \\ \cref{thm:attackerSetpoints} \\ \cref{algo:greedyAlgorithm}};

\node[draw,align=left](final) at ([xshift=0em,yshift=1.8em]main.south){ \cref{thm:greedyApproach} \\ 
\cref{algo:greedyAlgorithm2,algo:greedyApproach}};

\node[align=left](one) at ([xshift=-3em,yshift=2em]main.south east) {Fixed $\ad$\\ Stages 2, 3}; 


\path (three) edge[pil] (final);
\path (two) edge[pil] (final);

\end{tikzpicture}}
\caption{Outline of technical results in \cref{sec:sequentialGameLPF}.}\label{fig:technicalResultsFlowChart}
\end{figure}


\subsection{Upper and Lower Bounds on $\Lnpf$}
\label{subsec:upperAndLowerBoundsOnLnpf}

\begin{theorem}\label{thm:lpfNpfUpfRelationship} Let $(\psistarnpf,\phistarnpf)$, $(\psistarlpf,\phistarlpf)$ and $(\psistarupf,\phistarupf)$ be optimal solutions to $\adnpf$, $\adlpf$ and $\adupf$, respectively; and denote the optimal losses by $\Lnpf$, $\Llpf$, $\Lupf$,  respectively. Then,  
\begin{equation}\myEquationStyle\label{eq:lpfNpfUpfRelationship}
\Llpf \le \Lnpf \le \Lupf + \myFrac{\muMin\NN}{2\muMin+4}.
\end{equation} 
\end{theorem}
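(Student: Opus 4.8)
The plan is to reduce each of the two inequalities in \eqref{eq:lpfNpfUpfRelationship} to a \emph{pointwise} comparison of the loss across the three models for a common strategy profile, and then lift that comparison through the $\max$--$\min$ operators. The crucial observation is that the three sub-games $\adnpf$, $\adlpf$, $\adupf$ share exactly the same decision sets $\Psi$ and $\Phi$; only the state map $(\psi,\phi)\mapsto\x(\psi,\phi)$ differs, since $\sc=\gamma\emult\scdem$ and $\sg$ are fixed by $(\psi,\phi)$ through \eqref{eq:directLoadControl}--\eqref{eq:pvSetpoints} independently of which power-flow equations are used. Under Assumption~(A0) every profile yields states satisfying the hard bounds and no reverse flow, so the feasible regions coincide across models, and (focusing on $\ad=\mathbf 0$ as in the text) $\Lnpf=\max_{\psi\in\Psi}\min_{\phi\in\Phi}\Loss(\xnpf(\psi,\phi))$, and similarly for $\Llpf,\Lupf$. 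I will repeatedly use the elementary fact that if $f(\psi,\phi)\le g(\psi,\phi)+c$ for all $(\psi,\phi)$ and a constant $c\ge0$, then $\max_\psi\min_\phi f \le \max_\psi\min_\phi g + c$ (evaluate the inner $\min$ of $f$ at the minimizer of $g$, then take the outer $\max$).

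For the lower bound $\Llpf\le\Lnpf$, fix any $(\psi,\phi)$. By \cref{prop:allPowerFlows} the $\lpf$ model under-estimates the line flows and hence over-estimates the nodal voltages, so $\nulpf\ge\nunpf$ componentwise. Since $\nu\mapsto(\nuMin-\nu)_+$ is nonincreasing componentwise, this gives $\llovr(\xlpf)\le\llovr(\xnpf)$. The curtailment cost $\lvoll$ depends only on $\gamma$ and is therefore identical in both models, while $\lloss(\xnpf)=\lonenorm{r\emult\ellnpf}\ge 0$. Adding these, $\Losslpf(\xlpf)=\llovr(\xlpf)+\lvoll \le \llovr(\xnpf)+\lvoll+\lloss(\xnpf)=\Loss(\xnpf)$ holds pointwise; lifting through $\max_\psi\min_\phi$ (with $c=0$) yields $\Llpf\le\Lnpf$.

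For the upper bound, fix $(\psi,\phi)$ again. By \cref{prop:allPowerFlows} the $\UPF$ model over-estimates the line flows and under-estimates the voltages, $\nuupf\le\nunpf$, so $\llovr(\xnpf)\le\llovr(\xupf)$, and $\lvoll$ is again common. It remains to bound the line-loss term that $\UPF$ discards. The tree has $\NN$ edges, so $\lloss(\xnpf)=\sum_{(i,j)\in\E} r_j\ellnpf_j$ with $\ellnpf_j=\abs{\Snpf_j}^2/\nunpf_i$. Using $\abs{\Snpf_j}<1$ and $r_j\le \myFrac{\muMin^2}{4\muMin+8}$ from \subassmref{assm:lowImpedance}, together with $\nunpf_i\ge\muMin$ from \subassmref{assm:hardLowerBoundOfVoltage}, each summand is at most $\myFrac{\muMin^2}{4\muMin+8}\cdot\myFrac{1}{\muMin}=\myFrac{\muMin}{4\muMin+8}$, whence $\lloss(\xnpf)\le \myFrac{\muMin\NN}{4\muMin+8}\le \myFrac{\muMin\NN}{2\muMin+4}$. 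Therefore $\Loss(\xnpf)\le \llovr(\xupf)+\lvoll+\myFrac{\muMin\NN}{2\muMin+4}=\Lossupf(\xupf)+\myFrac{\muMin\NN}{2\muMin+4}$ pointwise, and lifting through $\max_\psi\min_\phi$ gives $\Lnpf\le\Lupf+\myFrac{\muMin\NN}{2\muMin+4}$.

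The main obstacle is not the two estimates above per se, but the voltage-sandwiching $\nulpf\ge\nunpf\ge\nuupf$ on which both rest: this must hold simultaneously for \emph{every} admissible profile and uniformly over the common feasible region, which is precisely the content of \cref{prop:allPowerFlows} and ultimately relies on the no-reverse-flow and small-impedance assumptions to control the discarded second-order terms $z_j\ellnpf_j$ and $\abs{z_j}^2\ellnpf_j$. Granting that proposition, the remaining ingredients---the sign of $\lloss$, the model-invariance of $\lvoll$, the explicit line-loss bound, and the monotone lifting through $\max$--$\min$---are routine.
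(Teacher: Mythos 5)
Your proof is correct and follows essentially the same route as the paper's: a pointwise comparison of the three losses for a common profile via \cref{prop:allPowerFlows}, the explicit bound $\lloss(\xnpf)\le\myFrac{\muMin\NN}{2\muMin+4}$ from $(\mathbf{A0})_1$ and $(\mathbf{A0})_3$, and then lifting through the $\max$--$\min$ operators (which the paper does by the explicit chain of inequalities that your general monotonicity lemma encapsulates). Your per-edge line-loss estimate is in fact tighter by a factor of $2$ than the paper's, but this changes nothing.
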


To prove  \cref{thm:lpfNpfUpfRelationship}, we first state \Cref{lem:lpfAndUpf,lem:lpfNpfRelationship}, and \cref{prop:allPowerFlows} that relates $\xnpf(\psi,\phi)$, $\xlpf(\psi,\phi)$, and $\xupf(\psi,\phi)$: 
\begin{lemma}\label{lem:lpfAndUpf}
Consider a fixed $(\psi, \phi)\in\Psi\times\Phi$. The following holds: $\scnpf = \sclpf = \scupf$, $\sgnpf = \sglpf = \sgupf$, and
\begin{subequations}
\begin{alignat}{8}
\label{eq:lpfAndUpf}\Supf &= (1+\slrrr)\Slpf \\
\nuupf - \nu_0\unity &= (1+\slrrr)(\nulpf - \nu_0\unity) \label{eq:nulpfupfRelationship}
\end{alignat}
\end{subequations}
\begin{subnumcases}{\hspace{-3.4cm}\forall\ (i,j)\in\E\ }
 \hspace{-0.7cm} & $\Snpf_j = \sum_{k\in\Lambda_j} \sn_k + z_k\ellnpf_k$\label{eq:recursiveNPFConservation}\\  \hspace{-0.7cm} & $\Slpf_j = \sum_{k\in\Lambda_j} \sn_k$\label{eq:recursiveLPFConservation}
\end{subnumcases}

\begin{subnumcases}{\hspace{-0.8cm}\forall\ j\in\N\ }
\hspace{-0.7cm} &$\nulpf_j = \nu_0 - 2\sum_{k\in\N} \Re(\conj{\zz}_{jk} \sn_k)$ \label{eq:recursiveVoltageSquareLpf} \\
  \hspace{-0.7cm} &$\nuupf_j = \nu_0 - 2(1+\slrrr)\sum_{k\in\N} \Re(\conj{\zz}_{jk} \sn_k)$\label{eq:recursiveVoltageSquareUpf}\\
\hspace{-0.7cm} &$\nulpf_j = \nu_0 - 2\sum_{k\in\P_j} \Re(\conj{z}_{k} \Slpf_k)$ \label{eq:recursiveVoltageSquareLpfToRoot} \\
\hspace{-0.7cm} &$\nuupf_j = \nu_0 - 2\sum_{k\in\P_j} \Re(\conj{z}_{k} \Supf_k)$. \label{eq:recursiveVoltageSquareUpfToRoot}
\end{subnumcases}
\normalsize
\end{lemma}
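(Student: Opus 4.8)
The plan is to establish the listed identities in dependency order: start from the fact that the net nodal demand is shared by all three power-flow models, then derive the conservation (power-balance) identities by a bottom-up tree recursion, and finally the voltage identities by telescoping along root-paths. First I would observe that, for a fixed profile $(\psi,\phi)$, the actual consumption $\sc$ and generation $\sg$ are determined entirely by the algebraic constraints \eqref{eq:directLoadControl} and \eqref{eq:pvSetpoints}, neither of which references the power-flow model. Hence $\scnpf=\sclpf=\scupf$ and $\sgnpf=\sglpf=\sgupf$, so the net demand $\sn=\sc-\sg$ is common to the NPF, LPF and UPF models; this is the quantity on which every subsequent identity rests.

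Next I would prove the conservation identities \eqref{eq:recursiveLPFConservation} and \eqref{eq:recursiveNPFConservation} by induction up the tree, using $\Lambda_j=\{j\}\cup\bigcup_{k\in\child_j}\Lambda_k$. In the LPF case a leaf has $\Slpf_j=\sn_j$ by \eqref{eq:LPFConservation} with $\Lambda_j=\{j\}$; for an internal node \eqref{eq:LPFConservation} gives $\Slpf_j=\sn_j+\sum_{k\in\child_j}\Slpf_k$, and the inductive hypothesis collapses this to $\sum_{k\in\Lambda_j}\sn_k$. The NPF recursion \eqref{eq:recursiveNPFConservation} is identical except that \eqref{eq:NPFconservation} carries the loss term $z_j\ellnpf_j$, so the summand becomes $\sn_k+z_k\ellnpf_k$. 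Running the same recursion on \eqref{eq:UPFConservation}, whose injection is scaled by $(1+\slrrr)$, yields $\Supf_j=(1+\slrrr)\sum_{k\in\Lambda_j}\sn_k=(1+\slrrr)\Slpf_j$, which is \eqref{eq:lpfAndUpf}.

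For the voltage identities I would first telescope the linear voltage-drop laws along $\P_j$. Summing \eqref{eq:linearVoltageSquare2} over the edges of $\P_j$ and using $\nulpf_0=\nu_0$ gives the root-path form \eqref{eq:recursiveVoltageSquareLpfToRoot}, and the same telescoping of \eqref{eq:linearVoltageSquareUpf} gives \eqref{eq:recursiveVoltageSquareUpfToRoot}. To reach the common-path-impedance forms, I would substitute $\Slpf_l=\sum_{k\in\Lambda_l}\sn_k$ into \eqref{eq:recursiveVoltageSquareLpfToRoot} and interchange the order of summation, collecting $\sum_{l\in\P_j\cap\P_k}z_l=\zz_{jk}$ to obtain \eqref{eq:recursiveVoltageSquareLpf}; the identical manipulation with $\Supf_l=(1+\slrrr)\Slpf_l$ produces \eqref{eq:recursiveVoltageSquareUpf}. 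Subtracting \eqref{eq:recursiveVoltageSquareLpf} from \eqref{eq:recursiveVoltageSquareUpf} then gives \eqref{eq:nulpfupfRelationship}.

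The only step that is not a routine induction or telescope is the summation interchange, and this is where I expect the bookkeeping to need care: for a fixed node $k$ one must verify that the path-nodes $l\in\P_j$ satisfying $k\in\Lambda_l$ are exactly the common ancestors $l\in\P_j\cap\P_k$. This follows from the tree characterization that $l\in\P_j\cap\P_k$ precisely when $l$ lies on the paths to both $j$ and $k$; granting this, the collected impedances are exactly $\conj{\zz}_{jk}=\sum_{l\in\P_j\cap\P_k}\conj{z}_l$, which closes the argument.
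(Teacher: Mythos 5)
Your proof is correct and follows essentially the same route as the paper, whose entire proof of this lemma is the single sentence ``recursively apply the power flow models \eqref{eq:NPF}, \eqref{eq:LPF}, and \eqref{eq:UPF}''; your bottom-up induction for the conservation identities, root-path telescoping for the voltages, and the summation interchange using $\{l\in\P_j : k\in\Lambda_l\}=\P_j\cap\P_k$ are exactly the details that sentence leaves implicit. Nothing is missing.
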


\begin{lemma}\label{lem:lpfNpfRelationship}
For a fixed $ (\psi, \phi) \in \Psi \times \Phi$, 
\begin{equation}\myEquationStyle
\forall\quad (i,j)\in\E, \quad \Snpf_j \le \myFrac{\Slpf_j}{(1-\slr)^{\nl-\abs{\P_j} + 1}}.
\end{equation}
\end{lemma}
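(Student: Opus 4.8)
The plan is to establish the bound by downward induction on the level $\level_j$, propagating the inequality from the leaves of $\G$ toward the substation. The ingredients are the power-conservation equations \eqref{eq:NPFconservation} and \eqref{eq:LPFConservation}, which, writing the common net demand $\sn_j = \sc_j - \sg_j$ (equal across the two models by \cref{lem:lpfAndUpf}), read $\Snpf_j = \sum_{k\in\child_j}\Snpf_k + \sn_j + z_j\ellnpf_j$ and $\Slpf_j = \sum_{k\in\child_j}\Slpf_k + \sn_j$, together with the small-line-loss bound $z_j\ellnpf_j \le \slr\,\Snpf_j$ from \subassmref{assm:smallLineLosses}. All inequalities are read componentwise on the active and reactive parts, and I use \subassmref{assm:noReversePowerFlows} in the form that the net nodal demand is nonnegative, $\sn_j \ge 0$ (hence $\Slpf_j \ge 0$). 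Writing $n_j := \nl - \abs{\P_j} + 1$ for the target exponent, the key bookkeeping fact is that every child $k\in\child_j$ lies one level deeper, so $\abs{\P_k} = \abs{\P_j}+1$ and thus $n_k = n_j - 1$.

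Leaf nodes (at any level) form the base case: a leaf has no children, so conservation gives $\Snpf_j = \sn_j + z_j\ellnpf_j \le \sn_j + \slr\Snpf_j$, and rearranging yields $(1-\slr)\Snpf_j \le \sn_j = \Slpf_j$, i.e. $\Snpf_j \le \Slpf_j/(1-\slr)$. Since $n_j \ge 1$, $0 < 1-\slr < 1$, and $\Slpf_j \ge 0$, we have $\Slpf_j/(1-\slr) \le \Slpf_j/(1-\slr)^{n_j}$, which is the claim. For a non-leaf $j$, I apply the loss bound to conservation to get $(1-\slr)\Snpf_j \le \sum_{k\in\child_j}\Snpf_k + \sn_j$, and then substitute the induction hypothesis $\Snpf_k \le \Slpf_k/(1-\slr)^{n_k} = \Slpf_k/(1-\slr)^{n_j-1}$ for each child, which is available because the children sit one level deeper and are processed first.

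The final maneuver absorbs the loose term $\sn_j$ under the common factor: since $(1-\slr)^{-(n_j-1)} \ge 1$ and $\sn_j \ge 0$, we have $\sn_j \le \sn_j/(1-\slr)^{n_j-1}$, so the right-hand side is at most $\big(\sum_{k\in\child_j}\Slpf_k + \sn_j\big)/(1-\slr)^{n_j-1} = \Slpf_j/(1-\slr)^{n_j-1}$ by \eqref{eq:LPFConservation}; dividing by $(1-\slr)$ gives $\Snpf_j \le \Slpf_j/(1-\slr)^{n_j}$ and closes the induction. I expect the main obstacle to be exactly this last step: it hinges on $\sn_j \ge 0$, so I must justify net-demand nonnegativity from \subassmref{assm:noReversePowerFlows} (if $\sn_j$ could be negative, the factor $(1-\slr)^{-(n_j-1)} \ge 1$ would push $\sn_j$ the wrong way and the induction would fail). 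A secondary check is the \emph{componentwise} reading of the complex inequalities, so that the loss bound splits as $r_j\ellnpf_j \le \slr\Pnpf_j$ and $x_j\ellnpf_j \le \slr\Qnpf_j$ and can be applied to the active and reactive flows separately.
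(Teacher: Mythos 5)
Your proof is correct and follows essentially the same route as the paper's: leaf-to-root induction using $z_j\ellnpf_j \le \slr\Snpf_j$ to obtain $(1-\slr)\Snpf_j \le \sum_{k\in\child_j}\Snpf_k + \sn_j$, then absorbing $\sn_j$ under the children's common factor. You are in fact more explicit than the paper on the two points it glosses over — the need for $\sn_j \ge 0$ (read off from \subassmref{assm:noReversePowerFlows} as positive net nodal demand) to absorb the loose term, and the handling of leaves at non-maximal depth via $(1-\slr)^{-1} \le (1-\slr)^{-n_j}$.
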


\begin{proposition} \label{prop:allPowerFlows}
For a fixed strategy profile $ (\psi, \phi)  \in \Psi\times\Phi$, 
\begin{equation*}
\Slpf \le \Snpf \le \Supf, \quad \nulpf \ge \nunpf \ge \nuupf, \quad \elllpf \le \ellnpf \le \ellupf. 
\end{equation*}

Hence, 
\small 
\begin{align}
\myEquationStyle
\begin{rcases}
\hspace{-0.3cm} \llovr(\xlpf) \le \llovr(\xnpf) \le \llovr(\xupf) \\
\hspace{-0.3cm} \lvoll(\xlpf) = \lvoll(\xnpf) = \lvoll(\xupf) \\ 
\hspace{-0.3cm} \lloss(\xlpf) \le \lloss(\xlpf) \le  \lloss(\xupf)
\end{rcases} \implies \Loss(\xlpf) \le \Loss(\xnpf) \le \Loss(\xupf). \label{eq:optimalValueRelationship}
\end{align}
\normalsize
\end{proposition}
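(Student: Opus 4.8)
The plan is to establish the three state-variable orderings in the order $S\to\nu\to\ell$ — each later quantity depends on the earlier ones through the conservation and voltage-drop equations — and then read off the loss inequalities termwise. Throughout I fix the profile $(\psi,\phi)$, so by \cref{lem:lpfAndUpf} the injections coincide across models and the net demands $\sn_k=\sc_k-\sg_k$ are common to all three; under \subassmref{assm:noReversePowerFlows} every $\Slpf_k,\Snpf_k,\Supf_k$ has nonnegative real and imaginary parts, and $r_k,x_k>0$, which I use repeatedly.

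First I would prove $\Slpf\le\Snpf\le\Supf$ componentwise. The lower bound is immediate from the subtree identities \eqref{eq:recursiveNPFConservation}--\eqref{eq:recursiveLPFConservation}: $\Snpf_j-\Slpf_j=\sum_{k\in\Lambda_j}z_k\ellnpf_k$, whose components $r_k\ellnpf_k$ and $x_k\ellnpf_k$ are nonnegative. For the upper bound I would combine \cref{lem:lpfNpfRelationship} with the definition of $\slrrr$: since $\abs{\P_j}=\level_j\le\nl$, the exponent $\nl-\abs{\P_j}+1$ lies in $[1,\nl]$, and because $0<1-\slr<1$ we get $(1-\slr)^{\nl-\abs{\P_j}+1}\ge(1-\slr)^{\nl}$, whence $\Snpf_j\le\Slpf_j/(1-\slr)^{\nl}=(1+\slrrr)\Slpf_j=\Supf_j$, the last equality being \eqref{eq:lpfAndUpf}.

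Next, the voltage ordering $\nulpf\ge\nunpf\ge\nuupf$. Telescoping the NPF drop equation \eqref{eq:voltageSquare2} along $\P_j$ gives $\nunpf_j=\nu_0-2\sum_{k\in\P_j}\Re(\conj{z}_k\Snpf_k)+\sum_{k\in\P_j}\abs{z_k}^2\ellnpf_k$. Comparing with \eqref{eq:recursiveVoltageSquareUpfToRoot} yields $\nunpf_j-\nuupf_j=2\sum_{k\in\P_j}\Re(\conj{z}_k(\Supf_k-\Snpf_k))+\sum_{k\in\P_j}\abs{z_k}^2\ellnpf_k\ge0$, since $\Re(\conj{z}_k(\Supf_k-\Snpf_k))=r_k\Re(\Supf_k-\Snpf_k)+x_k\Im(\Supf_k-\Snpf_k)\ge0$ by the already-proven $\Snpf\le\Supf$. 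The direction $\nulpf\ge\nunpf$ is the main obstacle: comparing with \eqref{eq:recursiveVoltageSquareLpfToRoot} gives $\nulpf_j-\nunpf_j=2\sum_{k\in\P_j}\Re(\conj{z}_k(\Snpf_k-\Slpf_k))-\sum_{k\in\P_j}\abs{z_k}^2\ellnpf_k$, in which the line-loss correction enters negatively and fights the positive first term. To show the first term still dominates I would substitute $\Snpf_k-\Slpf_k=\sum_{l\in\Lambda_k}z_l\ellnpf_l$, giving $\Re(\conj{z}_k(\Snpf_k-\Slpf_k))=\sum_{l\in\Lambda_k}(r_kr_l+x_kx_l)\ellnpf_l$; retaining only the diagonal contribution $l=k$ (present because $k\in\Lambda_k$) already yields $2\sum_{k\in\P_j}\Re(\conj{z}_k(\Snpf_k-\Slpf_k))\ge2\sum_{k\in\P_j}\abs{z_k}^2\ellnpf_k\ge\sum_{k\in\P_j}\abs{z_k}^2\ellnpf_k$, and the remaining off-diagonal terms are nonnegative, so $\nulpf_j-\nunpf_j\ge0$.

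Finally, $\elllpf\le\ellnpf\le\ellupf$ follows from the current relations \eqref{eq:currentMagnitudeNpf}, \eqref{eq:currentMagnitudeLpf}, \eqref{eq:currentMagnitudeUpf}: the ordering $\Slpf\le\Snpf\le\Supf$ gives $\abs{\Slpf_j}\le\abs{\Snpf_j}\le\abs{\Supf_j}$, while $\nulpf\ge\nunpf\ge\nuupf>0$ orders the denominators oppositely, so each ratio $\abs{S_j}^2/\nu_i$ is monotone in the desired direction. The loss inequalities then follow termwise: $\nulpf\ge\nunpf\ge\nuupf$ reverses under $(\nuMin-\cdot)_+$ and the weighted $\linfinityNorm{\cdot}$ to give the $\llovr$ chain; $\lvoll$ depends only on the common $\gamma$ (part of the fixed $\phi$), hence the equalities; and $\elllpf\le\ellnpf\le\ellupf$ with $r\ge0$ gives the $\lloss$ chain. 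Summing the three produces $\Loss(\xlpf)\le\Loss(\xnpf)\le\Loss(\xupf)$, as claimed.
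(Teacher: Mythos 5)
Your proof is correct, and its second half — $\Snpf\le\Supf$ via \cref{lem:lpfNpfRelationship} and the bound $(1-\slr)^{\nl-\abs{\P_j}+1}\ge(1-\slr)^{\nl}$, the telescoped comparison giving $\nunpf\ge\nuupf$, the numerator/denominator monotonicity for $\ell$, and the termwise loss inequalities — matches the paper's argument essentially step for step. Where you genuinely diverge is the first half: the paper does not prove $\Slpf\le\Snpf$ and $\nulpf\ge\nunpf$ at all, but cites them from an external reference on convex relaxations of DistFlow, whereas you derive both directly from the identities of \cref{lem:lpfAndUpf}. Your derivation is sound: $\Snpf_j-\Slpf_j=\ssum_{k\in\Lambda_j}z_k\ellnpf_k\ge 0$ componentwise is immediate, and for the voltage inequality the observation that $\Re\bigl(\conj{z}_k(\Snpf_k-\Slpf_k)\bigr)=\ssum_{l\in\Lambda_k}(r_kr_l+x_kx_l)\ellnpf_l$ contains the diagonal term $\abs{z_k}^2\ellnpf_k$ (since $k\in\Lambda_k$) is exactly what is needed to absorb the $+\abs{z_k}^2\ellnpf_k$ correction in the NPF voltage-drop equation, with a factor of $2$ to spare. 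What your route buys is a self-contained proof that makes explicit which structural facts are used (tree topology, $r_k,x_k>0$, and \subassmref{assm:noReversePowerFlows} to pass from the componentwise ordering of $S$ to the ordering of $\abs{S}^2$); what the paper's route buys is brevity by delegating the two standard DistFlow inequalities to the literature. Either is acceptable.
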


\Cref{prop:allPowerFlows} implies that  any attack $\psi$ that increases $\Llpf$ in $\adlpf$ (relative to the no attack case), also increases $\Lnpf$ in $\adnpf$ and $\Lupf$ in $\adupf$, respectively. The converse need not be true, i.e., an attack that increases $\Lnpf$ in $\adnpf$ (resp. $\Lupf$ in $\adupf$) need not increase $\Llpf$ in $\adlpf$ (resp. $\Lnpf$ in $\adnpf$). Similarly, any defender response $\phi$ that reduces $\Lupf$ (resp. $\Lnpf$), also reduces $\Lnpf$ (resp. $\Llpf$). Again, the converse statements do not apply here.

\begin{proof}[Proof of \textbf{\Cref{thm:lpfNpfUpfRelationship}}]


For any $\xnpf\in\Xnpf$, \small 
\begin{align}\label{eq:lineLossUpperBound}
\begin{aligned}
\hspace{-0.5cm}\lloss(\xnpf) \stackrel{\eqref{eq:currentMagnitudeNpf}}{=}  \ssum\limits_{(i,j)\in\E}\myFrac{r_j(\Pnpf_j^2+\Qnpf_j^2)}{\nunpf_i} \stackrel{\subassmref{assm:hardLowerBoundOfVoltage},  \subassmref{assm:lowImpedance}}{\le} \myFrac{2}{\muMin}\ssum\limits_{(i,j)\in\E} r_j \stackrel{\subassmref{assm:lowImpedance}}{\le} \myFrac{\muMin\NN}{2\muMin+4}.
\end{aligned}
\end{align} \normalsize
Hence, 
\begin{align*} \small 
\begin{aligned}\myEquationStyle
\Lupf & = \Lossupf(\xupf(\psistarupf,\phistarupf(\psistarupf))) \\
&\ge \Lossupf(\xupf(\psistarnpf,\phistarupf(\psistarnpf))) && (\text{by optimality of } \psistarupf) \\
&\ge \Lossupf(\xnpf(\psistarnpf,\phistarupf(\psistarnpf))) && (\text{by \cref{prop:allPowerFlows}})\\
&\stackrel{\eqref{eq:lineLossUpperBound}}{\ge}  \Loss(\xnpf(\psistarnpf,\phistarupf(\psistarnpf))) - \myFrac{\muMin\NN}{2\muMin+4} &&  \\
&\ge  \Loss(\xnpf(\psistarnpf,\phistarnpf(\psistarnpf))) - \myFrac{\muMin\NN}{2\muMin+4}  && (\text{by optimality of } \phistarnpf) \\
& = \Lnpf  - \myFrac{\muMin\NN}{2\muMin+4}.
\end{aligned}
\end{align*} \normalsize

Similarly, one can show $\Lnpf \ge \Llpf$. 
\end{proof}

\cref{thm:lpfNpfUpfRelationship} implies that the value of the sub-game $\adnpf$ with NPF can be lower (resp. upper) bounded by the value of $\adlpf$ (resp. $\adupf$). Our subsequent results show that both $\adlpf$ and $\adupf$ admit computationally efficient solutions. 

\subsection{Optimal defender response to fixed attacker strategy $\psi$}
\label{subsec:fixedAttackerStrategy}

We consider the sub-problem $\adspnpf$ of computing optimal defender response $\phistarnpf(\psi)$ for a fixed attack $\psi$.

The following Lemma shows that $\adspnpf$ is a Second-Order Cone Program (SOCP), and hence, can be solved efficiently. 
\begin{lemma} \label{lem:convexRelaxationOptimalSolution} Let $\Xcpf \coloneqq conv(\Xnpf)$ denote the set of states $\xnpf$ satisfying \eqref{eq:hardLowerVoltageBound}-\eqref{eq:dgConstraint1}, \eqref{eq:NPFconservation}, \eqref{eq:voltageSquare2}, and the relaxation of \eqref{eq:currentMagnitudeNpf}: 

For a fixed $\psi \in \Psi$, the problem of minimizing $\Loss(\xnpf(\psi,\phi))$ subject to $\xnpf\in\Xcpf$, \eqref{eq:directLoadControl}, \eqref{eq:pvSetpoints} is a SOCP. Its optimal solution is also optimal for $\adspnpf$.  

\end{lemma}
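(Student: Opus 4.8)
The plan is to prove the two assertions in turn: that the relaxed program is an SOCP, and that the relaxation is exact, so that any minimizer lands in $\Xnpf$ and therefore solves $\adspnpf$.

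For the SOCP claim I would check that, with $\psi$ fixed, every constraint defining $\Xcpf$ together with \eqref{eq:directLoadControl}, \eqref{eq:pvSetpoints} is affine or second-order-cone representable. The conservation \eqref{eq:NPFconservation} and voltage-drop \eqref{eq:voltageSquare2} relations are affine in $(\nunpf,\ellnpf,\scnpf,\sgnpf,\Snpf)$; the bounds \eqref{eq:hardLowerVoltageBound}, \eqref{eq:maxLoad} and the load/generation relations \eqref{eq:directLoadControl}, \eqref{eq:pvSetpoints} are affine in $(\gamma,\sgsetd)$; the set-point restriction $\abs{\sgsetd_i}\le\sgmax_i$ from $\S_i$ is a second-order cone; and the relaxed current law, $\nunpf_i\,\ellnpf_j \ge \Pnpf_j^2+\Qnpf_j^2$, is a rotated second-order cone. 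I would then put the objective \eqref{eq:lossFunction} in epigraph form: $\llovr$ is replaced by a scalar $t$ with the affine cuts $t\ge \W_i(\nuMin_i-\nunpf_i)$ and $t\ge 0$, while $\lvoll$ and $\lloss$ are already affine (the former because $\gamma\le 1$ makes each summand nonnegative). This exhibits the problem as minimizing a linear objective over affine and conic constraints, i.e. an SOCP.

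For exactness, the structural fact I would use is that once $\phi$ (hence $\sn=\sc-\sg$) is fixed, the recursion \eqref{eq:recursiveNPFConservation} of \cref{lem:lpfAndUpf} makes each $\Snpf_k$ an affine, nondecreasing function of $\ellnpf$, and substituting into \eqref{eq:voltageSquare2} makes each $\nunpf_n$ an affine, \emph{nonincreasing} function of $\ellnpf$: lowering any $\ellnpf_j$ lowers the flows on the path $\P_j$ and raises every nodal voltage. Then argue by contradiction. Suppose an optimal $\xnpf^\star$ of the relaxation has a strictly slack line, $\nunpf_i^\star\,\ellnpf_j^\star > \abs{\Snpf_j^\star}^2$. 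Lower $\ellnpf_j^\star$ by a small $\epsilon>0$ and recompute $\Snpf,\nunpf$ from \eqref{eq:NPFconservation}, \eqref{eq:voltageSquare2}. By continuity the slack line stays feasible; every other rotated cone stays feasible because its left-hand side weakly grows (voltages rose) while its right-hand side weakly shrinks ($\abs{\Snpf}^2$ fell); and \eqref{eq:hardLowerVoltageBound} together with no reverse flow holds since voltages only move up toward $\nu_0$. Now $\lloss=\lonenorm{r\emult\ellnpf}$ strictly decreases (as $r_j>0$), $\llovr$ weakly decreases, and $\lvoll$ is unchanged, so $\Loss$ strictly decreases, contradicting optimality. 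Hence every cone is tight, \eqref{eq:currentMagnitudeNpf} holds at $\xnpf^\star$, and $\xnpf^\star\in\Xnpf$. Since $\Xnpf\subseteq\Xcpf$, the relaxation value lower-bounds the $\adspnpf$ optimum, and attainment inside $\Xnpf$ forces equality, so $\xnpf^\star$ is optimal for $\adspnpf$.

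The main obstacle is this exactness step, specifically the feasibility check under the single-coordinate perturbation. Two points need care. First, the monotonicity of $\nunpf$ in $\ellnpf$ is not immediate from \eqref{eq:voltageSquare2}, since lowering $\ellnpf_j$ both relieves the $-2\Re(\conj{z}_k\Snpf_k)$ burden and lowers the direct $\abs{z_k}^2\ellnpf_k$ term; one must verify that the former dominates, which the factor $2$ against $1$ along the shared path $\P_n\cap\P_j$ secures. Second, the no-reverse-flow requirement $\Snpf\ge 0$ must survive the decrease along $\P_j$; this is guaranteed in the strictly positive forward-flow, small-impedance regime of \assmref{assm:throughout}, but can fail at degenerate optima where some $\Pnpf_j$ or $\Qnpf_j$ vanishes, and those boundary cases must be handled separately. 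Finally, the strict monotonicity of $\lloss$ in $\ellnpf$---the very reason the line-loss term is kept in \eqref{eq:lossFunction}---is what upgrades ``weakly improving'' to ``strictly improving'' and closes the contradiction.
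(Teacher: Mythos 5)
Your proposal is correct and follows essentially the same route as the paper: establish the conic/convex representability of the relaxed problem (the paper defers this to Theorem~1 of~\cite{farivar}, you spell out the epigraph and rotated-cone form), then prove exactness by contradiction, decreasing $\ellnpf_j$ on a slack line and using the strict monotonicity of $\lloss$ in $\ellnpf$ together with the monotonic response of $\Snpf$ and $\nunpf$ to show all other cone constraints remain feasible while the loss strictly drops. The only cosmetic difference is that the paper lowers the slack coordinate all the way to tightness rather than by a small $\epsilon$; the underlying argument is identical.
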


For fixed $\psi$ (attack) and fixed load control parameter $\gamma$ (e.g. when changing $\gamma$ is not allowed), \cref{prop:pvSetpoints} below provides a range of optimal defender set-points $\sgsetlpf^{d\star}$ and $\sgsetupf^{d\star}$ for LPF and $\UPF$ models, respectively. Note that, if $\gamma$ is fixed, $\lvoll(\xlpf)$ is also fixed. Then, the defender set-points can be chosen  by using $\llovr(\xlpf)$ as a loss function, instead of $\Losslpf(\xlpf)$. Similar argument holds for $\Lossupf(\xupf)$. 

 \def \di {d_i}
 \def \myti {\theta_i}
 \def \dstari {{d_i^\star}}
 \def \mytstari {{\theta_i^\star}}
 \def \mytti {\widetilde{\theta}_i}
\begin{proposition}\label{prop:pvSetpoints}

If we fix $\gamma\in \Gamma$ in $\adsplpf$, then $\forall i\in\N$,
\begin{equation*}\label{eq:pvSetpointsGeneral} \myEquationStyle
 \aa_i = 0 \implies \abs{\sgsetlpf^{d\star}_i} = \sgmax_i, \quad  \angle \sgsetlpf_i^{d\star} \in [\arccot \kmax,\arccot \kmin]. 
\end{equation*} 
\normalsize 
\noindent Furthermore, if the DN has identical $\rxratio \equiv \k$ ratio, then 
\begin{equation}\label{eq:optimalDefenderSetpoints}\myEquationStyle
 \aa_i = 0 \implies \myEquationStyle\abs{\sgsetlpf^{d\star}_i} = \sgmax_i,\quad  \angle \sgsetlpf_i^{d\star}  = \arccot K. 
\end{equation}
\normalsize 
\noindent Similar results hold for $\adspupf$. 

\end{proposition}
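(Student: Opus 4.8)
The plan is to reduce the sub-problem to the geometry of maximizing a family of linear functionals over a half-disk, and then to exploit the fact that the ``preferred'' generation directions for all nodes lie in one common angular cone. First, since $\gamma$ is fixed the term $\lvoll(\xlpf)$ is constant, so minimizing $\Losslpf$ is equivalent to minimizing $\llovr(\xlpf) = \linfinityNorm{\W\emult(\nuMin - \nulpf)_+}$, as already noted preceding the statement. I would then substitute the closed-form voltage expression \eqref{eq:recursiveVoltageSquareLpf},
\[
\nulpf_j = \nu_0 - 2\ssum_{k\in\N}\Re(\conj{\zz}_{jk}\,\sn_k),
\]
and split the sum into the contributions of the loads and of the compromised DERs (all fixed, since $\gamma$ and $\psi$ are fixed) and the contributions of the uncompromised DERs ($\aa_k = 0$), for which $\sg_k = \sgsetd_k$ is the decision variable. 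Writing each uncompromised contribution as $g_{jk} \coloneqq \Re(\conj{\zz}_{jk}\sgsetd_k) = \abs{\zz_{jk}}\,\abs{\sgsetd_k}\cos(\angle\sgsetd_k - \angle\zz_{jk})$, every $\nulpf_j$ becomes an affine, coordinatewise nondecreasing function of the $g_{jk}$, and $\llovr$ is nonincreasing in each $\nulpf_j$.

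Second, I would bound the common-path angle. Since every line has $r_j/x_j = K_j\in[\kmin,\kmax]$, applying the mediant inequality to $\rr_{jk} = \sum_{l\in\P_j\cap\P_k} r_l$ and $\xx_{jk} = \sum_{l\in\P_j\cap\P_k} x_l$ gives $\rr_{jk}/\xx_{jk}\in[\kmin,\kmax]$, whence $\angle\zz_{jk} = \arccot(\rr_{jk}/\xx_{jk})\in[\arccot\kmax,\arccot\kmin]$ for every pair $j,k$. Thus all the preferred directions $\angle\zz_{jk}$ lie in the single cone $[\arccot\kmax,\arccot\kmin]\subset(0,\pi/2)$.

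Third is the dominance argument, which is the crux. Fix an uncompromised node $k$ at an optimal response. If $\angle\sgsetd_k < \arccot\kmax$, then $\angle\sgsetd_k < \angle\zz_{jk}$ for all $j$, so nudging the angle up toward $\arccot\kmax$ strictly increases $\cos(\angle\sgsetd_k - \angle\zz_{jk})$, hence $g_{jk}$, for \emph{every} $j$ simultaneously; this raises all voltages and weakly lowers $\llovr$. The symmetric argument excludes $\angle\sgsetd_k > \arccot\kmin$. Once $\angle\sgsetd_k\in[\arccot\kmax,\arccot\kmin]$, every angular gap satisfies $\abs{\angle\sgsetd_k - \angle\zz_{jk}}<\pi/2$, so each $g_{jk}$ is strictly increasing in $\abs{\sgsetd_k}$ and increasing the magnitude to $\sgmax_k$ again raises all voltages. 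Because the whole cone lies in the open first quadrant, $\Re(\sgsetd_k)>0$ along these moves, so they remain feasible in $\S_k$. This yields an optimal response with $\abs{\sgsetlpf_k^{d\star}} = \sgmax_k$ and $\angle\sgsetlpf_k^{d\star}\in[\arccot\kmax,\arccot\kmin]$. The main obstacle is precisely that $\llovr$ is a weighted \emph{maximum} over nodes, so one cannot optimize the set-point for a single node in isolation; the resolution is the shared cone of Step~2, which guarantees that driving the set-point into the cone and out to the boundary improves every node's voltage at once and therefore cannot increase the max.

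Finally, for the identical-ratio case the mediant bound collapses: $\angle\zz_{jk} = \arccot K$ for all $j,k$, so the single direction $\arccot K$ maximizes every $g_{jk}$ simultaneously and the optimal direction is forced to $\arccot K$, giving \eqref{eq:optimalDefenderSetpoints}. For $\adspupf$, \cref{lem:lpfAndUpf} (specifically \eqref{eq:nulpfupfRelationship}) shows $\nuupf_j$ is the same affine, coordinatewise nondecreasing function of the same $g_{jk}$ up to the positive factor $(1+\slrrr)$, so the entire argument carries over verbatim. I would also flag the degenerate case in which $\llovr$ is already $0$: there the magnitude is not uniquely pinned down, and the statement should be read as describing an optimal response rather than every optimizer.
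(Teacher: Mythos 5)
Your proposal is correct and follows essentially the same route as the paper's proof: both write $\nulpf_j$ as an affine function of the uncompromised set-point via \eqref{eq:recursiveVoltageSquareLpf}, observe that increasing the magnitude raises every nodal voltage simultaneously (forcing $\abs{\sgsetlpf_i^{d\star}}=\sgmax_i$), and show the angular derivative $-\rr_{ij}\sin\theta+\xx_{ij}\cos\theta$ has a common sign for all $j$ outside $[\arccot\kmax,\arccot\kmin]$ because $\rr_{ij}/\xx_{ij}\in[\kmin,\kmax]$. Your explicit statement of the mediant inequality and your flagging of the degenerate case $\llovr=0$ (where the paper's strict-improvement contradiction needs $\llovr>0$) are welcome refinements but do not change the argument.
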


\subsection{Optimal attack under fixed defender response $\phi$}
\label{subsec:fixedDefenderResponse}

Now, we focus on the master problem $\admpnpf$, i.e., the problem of computing optimal attack for a fixed defender response $\phi$. The following Theorem characterizes the optimal attacker set-point, denoted by  $\sgsetnpf^{a\star}_i = \Re(\sgsetnpf^{a\star}_i) + \j\Im(\sgsetnpf^{a\star}_i)$, when $\aa_i = 1$ (i.e. DER at node~$i$ is targeted by the attacker).   
\begin{theorem}\label{thm:attackerSetpoints} Consider $\admpnpf$ for a fixed $\delta \in \Da$ (i.e., the DERs compromised by the attacker are specified by $\aa$ and the only decision variables in $\admpnpf$ are $\sgsetanpf$). Then
\begin{equation}\label{eq:attackerSetpoints}
\forall\; i\in\N \text{ s.t. } \aa_i = 1,\quad \sgsetnpf^{a\star}_i = 0 -\j\sgmax_i. 
\end{equation} 

\vspace{0.05cm}Same holds for both $\admplpf$ and $\admpupf$.

\normalsize
\end{theorem}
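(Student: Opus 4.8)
The plan is to reduce the master problem to a separable, coordinate-wise minimization over the attacker's admissible set-points, and then to invoke monotonicity of the power flow in the net demand. First I would observe that since $\phi$ (and hence the load-control vector $\gamma$) is held fixed in $\admpnpf$, the consumed power $\sc$ and therefore the term $\lvoll$ are constants; thus the attacker influences $\Loss$ only through $\llovr$ and $\lloss$. Next, by the adversary model~\eqref{eq:pvSetpoints} with $\ad = \textbf{0}$, the actual generation at a compromised node equals the injected false set-point, $\sg_i = \sgseta_i$, and this set-point must lie in the admissible set $\S_i = \{\Re(\cdot)\ge 0,\ \abs{\cdot}\le\sgmax_i\}$, since by~\eqref{eq:dgConstraint1} an inverter rejects any set-point violating the physical limits. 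Hence the only free variables are $\sgseta_i \in \S_i$ for each $i$ with $\aa_i = 1$, and the net demand at such a node is $\sn_i = \sc_i - \sgseta_i$ with $\sc_i$ fixed.

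I would then establish the separability for the linear case. Using the explicit voltage formula~\eqref{eq:recursiveVoltageSquareLpf},
\[
\nulpf_j = \nu_0 - 2\ssum_{k\in\N}\big(\rr_{jk}\Re(\sn_k) + \xx_{jk}\Im(\sn_k)\big),
\]
and noting that $\rr_{jk},\xx_{jk}\ge 0$, every nodal voltage $\nulpf_j$ is non-decreasing in each of $\Re(\sgseta_k)$ and $\Im(\sgseta_k)$, and the contribution of each compromised node enters additively. Thus the attacker can minimize \emph{every} $\nulpf_j$ simultaneously by minimizing both $\Re(\sgseta_i)$ and $\Im(\sgseta_i)$ independently over $\S_i$. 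Over $\S_i$ the minimum real part is $0$ and the minimum imaginary part is $-\sgmax_i$, and the key point is that the single point $\sgseta_i = 0 - \j\sgmax_i$ attains both at once (it sits on the imaginary axis and on the boundary circle), so there is no trade-off between the two coordinates. Since $\llovr = \linfinityNorm{W\emult(\nuMin-\nu)_+}$ is monotone non-increasing in the vector $\nu$, coordinate-wise minimization of $\nu$ maximizes $\llovr$; with $\lvoll$ fixed this proves optimality of~\eqref{eq:attackerSetpoints} for $\admplpf$, and the $\admpupf$ case follows identically after the rescaling~\eqref{eq:nulpfupfRelationship}.

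For the nonlinear master problem $\admpnpf$ the explicit linear voltage formula is unavailable because of the current terms $\ell$, and this is where the main work lies. The plan is to note that $\sgseta_i = -\j\sgmax_i$ still coordinate-wise maximizes the net demand $\sn_i = \sc_i - \sgseta_i$, and then to invoke the monotonicity of the DistFlow recursion on a tree, namely that a (weak) increase of every $\Re(\sn_k)$ and $\Im(\sn_k)$ decreases all nodal voltages and increases all squared currents $\ell$; this can be established via a comparison argument (cf.~\cref{lem:pvCompromiseEffects}). Granting this, both $\llovr$ and $\lloss = \lonenorm{r\emult\ell}$ are maximized by this attack, and since $\lvoll$ is constant, $\Loss$ is maximized. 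The main obstacle is precisely this monotonicity in the nonlinear model: unlike the linear case it cannot be read off a closed-form expression, and must be propagated through the coupled power-flow equations along the tree (the power-balance recursion~\eqref{eq:NPFconservation} from the leaves to the root and the voltage-drop equation~\eqref{eq:voltageSquare2} from the root to the leaves), where the fixed-point coupling $\ellnpf_j = \abs{\Snpf_j}^2/\nunpf_i$ must be controlled.
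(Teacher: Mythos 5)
Your treatment of $\admplpf$ and $\admpupf$ is correct and is essentially the paper's own argument: with $\phi$ fixed, $\lvoll$ is constant, the LPF voltages are affine and componentwise non-decreasing in $\Re(\sgseta_k)$ and $\Im(\sgseta_k)$, and the two coordinate-wise minima over $\S_i$ are attained simultaneously at $0-\j\sgmax_i$, so there is no trade-off; the paper phrases this as monotonicity of $\Plpf$, $\Qlpf$, $\nulpf$ in $\pglpf_i$ rather than via the closed form \eqref{eq:recursiveVoltageSquareLpf}, but the content is identical, and the $\UPF$ case follows by the same rescaling you cite.

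For $\admpnpf$, however, what you give is a plan rather than a proof, and the step you yourself flag as ``the main obstacle'' --- monotonicity of the nonlinear DistFlow solution in the net demand --- is precisely the substantive content of the paper's argument; it does not follow from \cref{lem:pvCompromiseEffects}, which concerns only the linear models. The paper closes this gap with a fixed-point comparison. For fixed $\phi$ and fixed $\aa$, the quantities $\Snpf$ and $\nunpf$ are affine in $(\sgseta,\ellnpf)$ by \eqref{eq:NPFconservation}--\eqref{eq:voltageSquare2}, so $\ellnpf$ is characterized as a fixed point $\ell=\f(\sgseta,\ell)$ of the map with components $\f_j \coloneqq \abs{\Snpf_j}^2/\nunpf_i$ for $(i,j)\in\E$. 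One then checks (i) that lowering $\Re(\sgseta_i)$ (or $\Im(\sgseta_i)$) increases $\f$ pointwise for every fixed $\ell$, and (ii) that $0\le\partial_{\ell_k}\f_j<1$, where the upper bound rests quantitatively on the hard voltage bound \subassmref{assm:hardLowerBoundOfVoltage} and the smallness of $r_j$ and $x_j$ in \subassmref{assm:lowImpedance}; together these imply the fixed point is unique and moves monotonically upward, whence all $\ellnpf$ and $\Snpf$ increase, all $\nunpf$ decrease, and $\Loss$ strictly increases, contradicting optimality of any set-point with $\Re(\sgseta_i)>0$ or $\Im(\sgseta_i)>-\sgmax_i$. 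Without some such quantitative control of the coupling $\ellnpf_j=\abs{\Snpf_j}^2/\nunpf_i$, the monotonicity you are ``granting'' need not hold (the fixed point need not be unique, let alone monotone), so this step cannot be waved through; it is exactly where the standing assumptions \assmref{assm:throughout} enter the nonlinear case.
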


\begin{proof}
If $\aa_i = 1$, then $\pgnpf_i = \pglpf_i = \Re(\sgset_i) = \Re(\sgset^{a\star}_i)$.

We first prove the simpler case for $\admplpf$. From \eqref{eq:LPF}, one can check that as functions of $\pglpf_i$, $\Plpf$ is strictly decreasing, $\Qlpf$ is constant, and $\nulpf$ is strictly increasing. Hence, $\Losslpf(\psi,\phi_f)$ is strictly increasing in $\pglpf_i$ (because $\llovr$ is non-decreasing as $\nulpf$ is decreasing; $\lvoll$ is constant). Hence, to minimize the loss $\Loss$, the attacker chooses  $\Re(\sgsetlpf^{a\star}_i) = 0$. Similarly, $\Im(\sgsetlpf^{a\star}_i) = -\j\sgmax_i$. Similarly, we can show that in $\admpupf$, $\sgsetupf^{a\star} = \mathbf{0}-\j\sgmax$.  

\ifArxivVersion
For the proof of $\sgsetastarnpf = \mathbf{0}-\j\sgmax$, please refer to the supplementary material at the end of the document. 
\else
To complete the proof we need to argue that the same conclusion (i.e., $\sgsetastarnpf = \mathbf{0}-\j\sgmax$) also holds for $\admpnpf$; we refer the reader to~\cite{shelaraminOnlineSupplementary} for this argument. 
\fi 
\end{proof} 

\begin{figure}[htbp!]
\centering
\includegraphics[height=6cm]{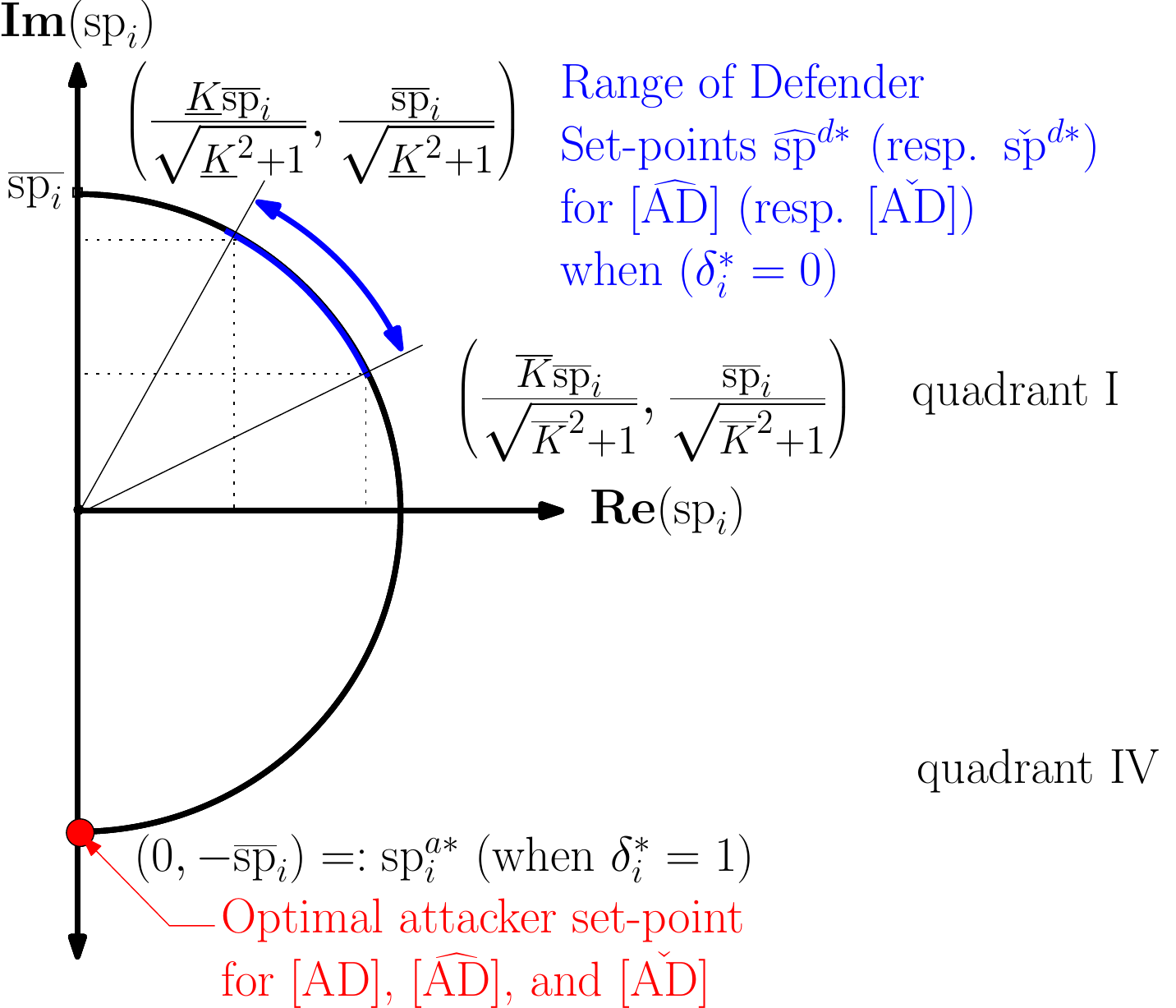}
\caption{Optimal attacker set-points (\Cref{thm:attackerSetpoints}) and range for optimal defender set-points (\Cref{prop:pvSetpoints}).}
\label{fig:voltVarControl}
\end{figure}

\Cref{fig:voltVarControl} shows the optimal attacker set-point $\sgset^{a\star}_i$ for $\aa^\star_i = 1$, and the defender set-points for the DERs for $\aa^\star_j = 0$. 

Thanks to  \Cref{thm:attackerSetpoints}, $\sc$ and $\sg$ are determined by $\aa$ and $\phi$ (since optimal $\sgsetastarnpf$ is given by \eqref{eq:attackerSetpoints}). Thus, for given $(\aa,\phi)$, loss function can be denoted as $\Loss(\xnpf(\begin{bmatrix}
\textbf{0}-\j\sgmax,\aa
\end{bmatrix},\phi))$; and $\adnpf$ can be restated as follows: 
\begin{align*}
\begin{aligned}
\Lnpf\quad = \quad & \mmax_{\aa\in\Da}\; \mmin_{\phi\in\Phi}\;  \Loss(\xnpf(\aa,\phi)) \myEquationStyle \quad  \text{ s.t. } \quad  \eqref{eq:dadGameConstraints},  \eqref{eq:attackerSetpoints}. 
\end{aligned}
\end{align*}
Same holds for $\adlpf$ (resp. $\adupf$) and $\admplpf$ (resp. $\admpupf$). Note that the attacker actions on DERs may not be limited to an incorrect set-point attack. For example, the attacker can simply choose to disconnect the DER nodes by choosing $\sgseta = \mathbf{0} + \mathbf{0}\j$. However, \cref{thm:attackerSetpoints} shows that the attacker will induce more loss to the defender by causing the DERs to withdraw maximum reactive power rather than simply disconnecting them. 

 Let $\Delta_j(\nulpf_i)$ (resp. $\Delta_\aa(\nulpf_i)$) be the change in voltage at node $i$ caused due to compromise of DER at node~$j$ (resp. compromise of DERs due to attack vector $\aa$.) Similarly, define $\Delta_j(\nuupf_i)$ and $\Delta_\aa(\nuupf_i)$. We now state a useful result: 

\begin{lemma}\label{lem:pvCompromiseEffects}
If $\phi$ is fixed, then 
\begin{subnumcases}{\hspace{-1cm}\forall\  i,j\in\N}
\hspace{-0.8cm}& $\Delta_j(\nulpf_i) = 2\Re(\conj{Z}_{ij}(\sgsetd_j + \j\sgmax_j))$ \label{eq:recursiveLPFVoltageSquare}\\
\hspace{-0.8cm}& $\Delta_j(\nuupf_i) = 2(1+\slrrr)\Re(\conj{Z}_{ij}(\sgsetd_j + \j\sgmax_j))$ \label{eq:recursiveUPFVoltageSquare}
\end{subnumcases}
\begin{subnumcases}{\hspace{-3.1cm}\forall\ \aa \subseteq \Da}
\hspace{-0.8cm} & $\Delta_\aa(\nulpf_i) = \ssum_{j:\aa_j=1} \Delta_j(\nulpf_i)$ \label{eq:sumVoltageSquareLPF}\\
\hspace{-0.8cm} & $\Delta_\aa(\nuupf_i) = \ssum_{j:\aa_j=1} \Delta_j(\nuupf_i)$. \label{eq:sumVoltageSquareUPF}
\end{subnumcases}
\end{lemma}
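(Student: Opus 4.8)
The plan is to exploit the fact that, in both the LPF and $\UPF$ models, the squared nodal voltages are \emph{affine} functions of the net nodal powers $\sn = \sc - \sg$, so that the effect of compromising a DER reduces to a clean superposition argument. Concretely, \cref{lem:lpfAndUpf} supplies the closed forms $\nulpf_i = \nu_0 - 2\sum_{k\in\N}\Re(\conj{\zz}_{ik}\sn_k)$ and $\nuupf_i = \nu_0 - 2(1+\slrrr)\sum_{k\in\N}\Re(\conj{\zz}_{ik}\sn_k)$, namely \eqref{eq:recursiveVoltageSquareLpf} and \eqref{eq:recursiveVoltageSquareUpf}. Because $\phi=[\sgsetd,\gamma]$ is held fixed, the consumptions $\sc = \gamma\emult\scdem$ and the set-points $\sgsetd_k$ of all non-targeted DERs stay constant; hence compromising a single node $j$ perturbs exactly one summand, the $k=j$ term.

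First I would compute the change in net power at a compromised node. In the non-compromised baseline the generation is the defender's set-point $\sg_j = \sgsetd_j$, whereas by \cref{thm:attackerSetpoints} the optimal attack forces $\sg_j = 0 - \j\sgmax_j$. Since $\sn_j = \sc_j - \sg_j$ and $\sc_j$ is unchanged, the net power increases by $\Delta\sn_j = \sgsetd_j - (0-\j\sgmax_j) = \sgsetd_j + \j\sgmax_j$. Substituting this single-term perturbation into \eqref{eq:recursiveVoltageSquareLpf} and reading $\Delta_j(\nulpf_i)$ as the resulting voltage \emph{drop} at node $i$ yields $\Delta_j(\nulpf_i) = 2\Re(\conj{\zz}_{ij}(\sgsetd_j + \j\sgmax_j))$, which is \eqref{eq:recursiveLPFVoltageSquare}; repeating verbatim with \eqref{eq:recursiveVoltageSquareUpf} introduces the extra factor $(1+\slrrr)$ and gives \eqref{eq:recursiveUPFVoltageSquare}.

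The additivity claims \eqref{eq:sumVoltageSquareLPF}--\eqref{eq:sumVoltageSquareUPF} then follow by linearity: for an attack vector $\aa$, every targeted node $j$ with $\aa_j=1$ independently shifts its own summand by $\Delta\sn_j = \sgsetd_j + \j\sgmax_j$, and since $\nulpf_i$ (resp. $\nuupf_i$) is affine in $\{\sn_k\}_{k\in\N}$, the total drop is exactly the sum of the per-node drops, i.e. $\Delta_\aa(\nulpf_i) = \sum_{j:\aa_j=1}\Delta_j(\nulpf_i)$, and likewise for the $\UPF$ model.

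I expect the only real care-point---rather than a genuine obstacle---to be bookkeeping on signs and conventions: verifying that the defender set-point $\sgsetd_j$ (not the attacker set-point) is the correct baseline generation, that $\Delta_j$ is measured as a voltage decrease so that the stated expressions come out with a positive sign, and that fixing $\phi$ genuinely decouples the node-$j$ perturbation from every other summand so that superposition is exact. No approximation enters here; the statement is an exact identity for the two linear models.
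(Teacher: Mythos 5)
Your proposal is correct and follows essentially the same route as the paper's proof: the paper likewise invokes \cref{thm:attackerSetpoints} to compute the set-point change $\sgsetd_j - (0-\j\sgmax_j) = \sgsetd_j + \j\sgmax_j$ and then appeals to the linearity of the closed-form voltage expressions \eqref{eq:recursiveVoltageSquareLpf}--\eqref{eq:recursiveVoltageSquareUpf} from \cref{lem:lpfAndUpf}, both for the single-node formula and for the superposition over an attack vector. Your additional care about signs and the decoupling under fixed $\phi$ is sound and matches the paper's (terser) argument.
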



For a fixed $\phi\in\Phi$, let $\Dimlpf(\phi)$ be the set of optimal attack vectors that maximize voltage bounds violation under $\lpf$ at a pivot node, say~$i$. Formally, 
\begin{align}\small
\begin{aligned}
\hspace{-0.3cm}\Dimlpf(\phi) \coloneqq \argmax_{\aa\in\Da} \W_i(\nuMin_i-\nulpf_i) \; \text{ s.t. } \xlpf(\aa,\phi) \in\Xlpf, \eqref{eq:directLoadControl}, \eqref{eq:pvSetpoints}
\end{aligned}
\end{align}\normalsize
Also, let 
\begin{equation} \myEquationStyle \label{eq:defCandidateOptimalAttackSet}
\Dastarlpf(\phi) \coloneqq \bigcup_{i\in\N} \  \Dimlpf(\phi) 
\end{equation} denote the set of candidate optimal attack vectors, and $\aailpf \in \Dimlpf(\phi)$ denote any vector in $\Dimlpf$. Similarly, define $\Dimupf(\phi)$, $\Dastarupf(\phi)$, and $\aaiupf$.

Using \Cref{lem:pvCompromiseEffects}, \Cref{algo:greedyAlgorithm} computes optimal $\aastarlpf$ to maximize $\llovr$ for a fixed defender action $\phi\in\Phi$ \cite{shelaramin}. In each iteration, the Algorithm selects one node as a pivot node. For a pivot node, say~$i$, a set of target nodes $\aailpf$ is determined by selecting $\arcm$ nodes with largest $\Delta_j(\nulpf_i)$ (see \Cref{algo:helperProcedures} in Appendix). Applying \Cref{lem:pvCompromiseEffects}, the final nodal voltage at the current pivot node~$i$ is given by $\nulpf_i - \Delta_{\aailpf}(\nulpf_i)$. The attack strategy that maximizes $\llovr$ is the set $\aaklpf$ corresponding to a pivot node~$k$ that admits maximum voltage bound violation when DERs specified by $\aaklpf$ are compromised. \tcb{\Cref{algo:greedyAlgorithm} repeatedly calls procedure \Cref{algo:helperProcedures}, considering each node as the pivot node, and hence, requires $\O(n^2 \log n)$ time.} 
 
 \begin{algorithm}[h!]
 
 \begin{algorithmic}[1]
 \small
 \State $\aastarlpf(\phi) \gets $\Call{OptimalAttackForFixedResponse}{$\phi$} 
 \Procedure{OptimalAttackForFixedResponse}{$\phi$}
 \State Compute state vector for no attack $\xlpf(\textbf{0},\phi) \in \Xlpf$ 
 \For {$i \in \N$} 

		\State $\aailpf \gets  \textsc{GetPivotNodeOptimalAttack}(i,\sgsetd)$, and calculate $\Delta_{\aailpf}(\nulpf_i)$ using  Lemma.~\ref{lem:pvCompromiseEffects}
		
		\State Calculate new voltage value $\nulpf_i' \gets  \nulpf_i - \Delta_{\aailpf}(\nulpf_i)$
 
 \EndFor
 
 \State $k \gets  \argmax_{i\in\N}\W_i(\nuMin_i - \nulpf_i') $ 
 
 \State \Return $\aalpf \gets \aaklpf$ (Pick $\aaklpf$ which maximally violates \eqref{eq:voltageConstraint})
 \EndProcedure
 \Procedure{GetPivotNodeOptimalAttack}{$i$, $\sgsetd$}
   	\State $(\J,\N_{\gi},\m') \gets $ \Call{OptimalAttackHelper}{$i$, $\sgsetd$}
  	\State Randomly choose $\arcm-m'$ nodes from  $\N_{\gi}$ to form $\N'$
  	\State \Return $\aailpf \in\Da$ such that $\aailpf_k = 1 \iff k\in \J \cup \N'$
  \EndProcedure
  
 \end{algorithmic}
 \caption{Optimal Attack for Fixed Defender Response}
 \label{algo:greedyAlgorithm}
 \normalsize
 \end{algorithm}
 
The following proposition argues that \Cref{algo:greedyAlgorithm} computes the optimal attack vectors for $\admplpf$ and $\admpupf$. 

 \begin{proposition} \label{prop:greedyOptimality}
For a fixed $\phi\in\Phi$, if $\aalpf$ is the optimal attack vector computed by \Cref{algo:greedyAlgorithm}, then $\aalpf$ is also an optimal attack vector of $\admplpf$. 
  Same holds for $\admpupf$.    
 \end{proposition}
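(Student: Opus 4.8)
The plan is to reduce $\admplpf$ to a purely combinatorial selection problem and then exploit the additive, $\ell_\infty$-type structure of the objective. First I would observe that fixing $\phi$ fixes the load-control vector $\gamma$, so $\lvoll(\xlpf)$ is a constant; hence maximizing $\Losslpf = \llovr + \lvoll$ over the attacker's choices is the same as maximizing $\llovr$. By \cref{thm:attackerSetpoints} the optimal attacker set-points are already pinned to $-\j\sgmax_i$ on every compromised node, so the sole remaining decision variable is the attack vector $\aa\in\Da$, and the objective becomes $\llovr(\xlpf(\aa,\phi)) = \max_{i\in\N} \W_i(\nuMin_i - \nulpf_i(\aa))_+$.

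The second step is to solve the per-pivot inner problem. By \cref{lem:pvCompromiseEffects}, the voltage at a node $i$ under attack $\aa$ satisfies $\nulpf_i(\aa) = \nulpf_i(\mathbf{0}) - \sum_{j:\aa_j=1}\Delta_j(\nulpf_i)$, i.e. the depression $\Delta_\aa(\nulpf_i)$ decomposes additively over the compromised nodes. Consequently, for a fixed pivot $i$, minimizing $\nulpf_i(\aa)$ subject to $\lzeronorm{\aa}\le\arcm$ is exactly the problem of selecting the (at most) $\arcm$ nodes $j$ with the largest per-node contributions $\Delta_j(\nulpf_i)$ — a trivial sorting/greedy step, which is precisely what \textsc{GetPivotNodeOptimalAttack} returns. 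I would record that every $\Delta_j(\nulpf_i)\ge 0$ (by feasibility of $\sgsetd_j$ and positivity of the common-path impedance), so the full budget is always used and no genuine combinatorial trade-off arises beyond sorting; ties and zero contributions (nodes sharing no common path with $i$) are irrelevant to the value, which is why the procedure may break ties arbitrarily. Writing $\aailpf$ for this attack and $\nulpf_i' \coloneqq \nulpf_i(\aailpf)$, we then have $\nulpf_i' \le \nulpf_i(\aa)$ for every feasible $\aa$.

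The final step is the optimality argument, built on the fact that an $\ell_\infty$ maximum is attained at a single node. Let $\aa^\star$ be any optimizer of $\admplpf$ and let $k^\star$ be a node attaining the maximum in $\llovr(\xlpf(\aa^\star,\phi))$. Since $\aailpf$ for the pivot $k^\star$ minimizes the voltage there, $\nulpf_{k^\star}' \le \nulpf_{k^\star}(\aa^\star)$, so $\W_{k^\star}(\nuMin_{k^\star}-\nulpf_{k^\star}')_+ \ge \llovr(\xlpf(\aa^\star,\phi))$. Because \cref{algo:greedyAlgorithm} selects the pivot $k$ maximizing $\W_i(\nuMin_i-\nulpf_i')$ and returns $\aaklpf$ (and assuming the optimal loss is positive, else any attack is trivially optimal), and because $\llovr(\xlpf(\aaklpf,\phi))$ is a maximum over all nodes that is at least its value at the pivot $k$, chaining the inequalities gives $\llovr(\xlpf(\aaklpf,\phi)) \ge \W_k(\nuMin_k-\nulpf_k')_+ \ge \W_{k^\star}(\nuMin_{k^\star}-\nulpf_{k^\star}')_+ \ge \llovr(\xlpf(\aa^\star,\phi))$, using $\nulpf_k(\aaklpf)=\nulpf_k'$ from \cref{lem:pvCompromiseEffects}. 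Optimality of $\aa^\star$ forces equality throughout, so $\aaklpf$ is optimal.

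I expect the main obstacle to be the careful bookkeeping in the last step rather than any deep difficulty: one must not confuse the value of an attack measured only at its own pivot with the true objective (the maximum over all nodes), and must verify that ranging the pivot over all of $\N$ suffices precisely because the optimal solution's violation is realized at some node, which is covered when that node is used as pivot. The additivity in \cref{lem:pvCompromiseEffects} is what makes the inner maximization decouple into a sort; without it the per-pivot subproblem would not be solvable greedily. The $\admpupf$ claim follows verbatim, replacing each LPF quantity with its $\UPF$ counterpart and invoking \eqref{eq:recursiveUPFVoltageSquare} and \eqref{eq:sumVoltageSquareUPF}.
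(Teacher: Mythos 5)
Your proposal is correct and follows essentially the same route as the paper's proof: reduce to maximizing $\llovr$ (since $\lvoll$ is fixed by $\phi$), use the additive decomposition of \cref{lem:pvCompromiseEffects} to show the per-pivot selection is a sort, and then chain the inequalities between the algorithm's winning pivot and the node attaining the maximum in the optimal attack's objective, with the zero-loss case handled trivially. Your added observation that $\Delta_j(\nulpf_i)\ge 0$ (so the full budget is always used) is a small point of care beyond what the paper states explicitly, but the argument is otherwise the same.
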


 We now show that the effect of DER compromise at either node $j$ or $k$ on the node $i$ depends upon the locations of nodes $j$ and $k$ relative to node $i$. The following Proposition states that if node $j$ is upstream to node $k$ relative to the pivot node $i$ ($j\prec_ik$), then the DER compromise at node $k$ impacts on $\nulpf_i$ more than the DER compromise on node $j$; and if $j=_i k$, then the effect of DER compromise at $j,k$ on $\nulpf_i$ is identical. 
 \begin{proposition} \cite{shelaramin} Consider $\admplpf$. Let nodes $i,j,k\in\N$ where $i$ is the pivot node, $\sgsetd_j = \sgsetd_k$, and $\sgmax_j = \sgmax_k$. \ifpropDownstream If $j \prec_i k$ (resp. $j =_i k$), then $\Delta_j(\nulpf_i) < \Delta_k(\nulpf_i) $ (resp. $\Delta_j(\nulpf_i) = \Delta_k(\nulpf_i) $). \else If $j \preceq_i k$, then $\Delta_j(\nulpf_i) < \Delta_k(\nulpf_i) $. \fi  
 Same holds true for $\admpupf$. 
 \label{prop:downstream}
 \end{proposition}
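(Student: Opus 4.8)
The plan is to reduce the comparison of $\Delta_j(\nulpf_i)$ and $\Delta_k(\nulpf_i)$ to a comparison of the common-path impedances $\zz_{ij}$ and $\zz_{ik}$, and then translate the relative ordering into a containment of common paths. First I would invoke \cref{lem:pvCompromiseEffects}, equation \eqref{eq:recursiveLPFVoltageSquare}, to write each compromise effect explicitly. Expanding the real part with $\conj{\zz}_{ij} = \rr_{ij} - \j\xx_{ij}$ and $\sgsetd_j + \j\sgmax_j = \Re(\sgsetd_j) + \j(\Im(\sgsetd_j) + \sgmax_j)$ gives
\begin{equation*}
\Delta_j(\nulpf_i) = 2\big[\rr_{ij}\Re(\sgsetd_j) + \xx_{ij}\big(\Im(\sgsetd_j) + \sgmax_j\big)\big],
\end{equation*}
and likewise for $k$. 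Using the homogeneity hypotheses $\sgsetd_j = \sgsetd_k$ and $\sgmax_j = \sgmax_k$, all set-point data cancel in the difference, leaving
\begin{equation*}
\Delta_k(\nulpf_i) - \Delta_j(\nulpf_i) = 2\big[(\rr_{ik}-\rr_{ij})\Re(\sgsetd_j) + (\xx_{ik}-\xx_{ij})(\Im(\sgsetd_j)+\sgmax_j)\big].
\end{equation*}

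Next I would analyze the common paths. Since the intersection $\P_i \cap \P_j$ of two root-paths in a tree is exactly the root-path to the lowest common ancestor of $i$ and $j$, it is a contiguous prefix of $\P_i$; the same holds for $\P_i \cap \P_k$. By the definition of the relative ordering, $j \prec_i k$ means $\P_i \cap \P_j \subsetneq \P_i \cap \P_k$, i.e. the common path with $k$ contains every node (hence every line) of the common path with $j$ plus at least one additional line of $\P_i$. Because $\zz_{ij} = \ssum_{m\in\P_i\cap\P_j} z_m$ sums line impedances with strictly positive real and imaginary parts ($r_m, x_m > 0$), this containment yields $\rr_{ik} > \rr_{ij}$ and $\xx_{ik} > \xx_{ij}$. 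In the boundary case $j =_i k$ the two common paths coincide, so $\rr_{ik} = \rr_{ij}$ and $\xx_{ik} = \xx_{ij}$, the displayed difference is $0$, and $\Delta_j(\nulpf_i) = \Delta_k(\nulpf_i)$ follows at once.

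For the strict case it remains to sign the bracketed combination. The defender set-point obeys the inverter constraint $\sgsetd_j \in \S_j$, so $\Re(\sgsetd_j) \ge 0$ and $\abs{\sgsetd_j} \le \sgmax_j$, the latter forcing $\Im(\sgsetd_j) + \sgmax_j \ge 0$. Combined with $\rr_{ik}-\rr_{ij} > 0$ and $\xx_{ik}-\xx_{ij} > 0$, every term is non-negative, so $\Delta_k(\nulpf_i) \ge \Delta_j(\nulpf_i)$. The one obstacle — and the step I expect to require the most care — is promoting this to a \emph{strict} inequality, since both terms vanish simultaneously precisely when $\sgsetd_j = -\j\sgmax_j$, i.e. when the defender has already driven the uncompromised DER to the worst-case compromised set-point of \cref{thm:attackerSetpoints}. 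In the physically relevant regime where an uncompromised DER contributes non-negative reactive power ($\Im(\sgsetd_j) \ge 0$, consistent with the first-quadrant optimal set-points of \cref{prop:pvSetpoints}), one has $\Im(\sgsetd_j) + \sgmax_j \ge \sgmax_j > 0$, so the reactance term is strictly positive and $\Delta_j(\nulpf_i) < \Delta_k(\nulpf_i)$. Finally, the $\UPF$ claim is immediate: by \eqref{eq:recursiveUPFVoltageSquare}, $\Delta_j(\nuupf_i) = (1+\slrrr)\Delta_j(\nulpf_i)$ with $1 + \slrrr > 0$, so all equalities and strict inequalities carry over verbatim after scaling.
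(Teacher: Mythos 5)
Your proof follows essentially the same route as the paper's: apply \cref{lem:pvCompromiseEffects}, reduce the comparison to the common-path impedances via $\P_i\cap\P_j \subseteq \P_i\cap\P_k$, and use positivity of the line resistances and reactances, with the $\UPF$ case following by the $(1+\slrrr)$ scaling. You are in fact more careful than the paper about when the inequality is strict (the degenerate case $\sgsetd_j = -\j\sgmax_j$ makes both terms vanish), a point the paper's proof passes over silently; your appeal to the defender set-points of \cref{prop:pvSetpoints} closes that gap.
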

 
 \ifNotArxivVersion
 \Cref{prop:downstream} implies that, broadly speaking, compromising downstream DERs is advantageous to the attacker than compromising the upstream DERs. In other words, compromising DERs by means of clustered attacks are more beneficial to the attacker than distributed attacks. Consequently, our results (see \cref{sec:securityDesign}) on security strategy in Stage~1 show that the defender should utilize his security strategy to deter cluster attacks. 
 
 \fi 

We, now, state a result that connects the optimal attack strategies for $\admplpf$ and $\admpupf$.
\begin{proposition}\label{prop:generalApproach} 
For a fixed $\phi \in \Phi$, the following holds:

1) The sets of candidate optimal attack vectors that maximizes voltage bound violations  under $\lpf$ and $\UPF$ are identical, i.e., 
\begin{equation}
\Dastarlpf(\phi) \equiv \Dastarupf(\phi). 
\end{equation} 

2) Furthermore, assume that $\nuMin_i = \nuMin_j \eqqcolon \nuMin$ and $W_i = W_j \eqqcolon W\ \forall\ i,j\in\N$. Also, let the sets of optimal attack strategies for $\admplpf$ and $\admpupf$ be denoted by $\Psistarlpf(\phi)$ and $\Psistarupf(\phi)$, respectively. Let $\psistarlpf \in \Psistarlpf(\phi)$ and $\psistarupf \in \Psistarupf(\phi)$ be any two attack strategies. Now, if 
 \begin{equation}
 \llovr(\xlpf(\psistarlpf,\phi)) > 0  \quad\text{and}\quad  \llovr(\xupf(\psistarupf,\phi)) > 0, \label{eq:positiveLovrCond}
 \end{equation} then the sets of optimal attack strategies for $\admplpf$ and $\admpupf$  are identical, i.e.,  
 \begin{equation}
\Psistarlpf(\phi) \equiv \Psistarupf(\phi). 
 \end{equation}
 \end{proposition}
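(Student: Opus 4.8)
The plan is to treat the two claims separately, both times exploiting \cref{lem:lpfAndUpf} and \cref{lem:pvCompromiseEffects}, which say that the $\UPF$ voltage drops are a uniform $(1+\slrrr)$-rescaling of the $\lpf$ voltage drops. For Part~1, I would first note that the two feasible sets of attack vectors coincide: by \cref{lem:lpfAndUpf}, $\Supf = (1+\slrrr)\Slpf$, so $\Supf \ge 0 \iff \Slpf \ge 0$, whence $\xlpf(\aa,\phi)\in\Xlpf \iff \xupf(\aa,\phi)\in\Xupf$ for every $\aa\in\Da$. For the objective, \cref{lem:pvCompromiseEffects} gives $\Delta_\aa(\nuupf_i) = (1+\slrrr)\Delta_\aa(\nulpf_i)$. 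Since $\nulpf_i(\aa,\phi) = \nulpf_i(\textbf{0},\phi) - \Delta_\aa(\nulpf_i)$, maximizing $\W_i(\nuMin_i - \nulpf_i)$ over $\aa$ is the same as maximizing $\Delta_\aa(\nulpf_i)$, and the identical computation for $\UPF$ shows that maximizing $\W_i(\nuMin_i - \nuupf_i)$ is the same as maximizing $(1+\slrrr)\Delta_\aa(\nulpf_i)$. As $1+\slrrr>0$ is a single positive scalar, the two $\argmax$ sets coincide for each pivot $i$, i.e.\ $\Dimlpf(\phi) = \Dimupf(\phi)$; taking the union over $i$ in \eqref{eq:defCandidateOptimalAttackSet} yields $\Dastarlpf(\phi)\equiv\Dastarupf(\phi)$. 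This part uses neither uniformity nor the positivity condition, because each $\Dimlpf$ pins a single pivot and carries no positive-part operator.

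For Part~2, I would first reduce an optimal attack strategy to its attack vector: with $\phi$ (hence $\gamma$) fixed, $\lvoll$ is constant, so maximizing $\Losslpf$ (resp.\ $\Lossupf$) is equivalent to maximizing $\llovr$; and by \cref{thm:attackerSetpoints} the compromised set-points are pinned at $-\j\sgmax_i$, so an optimal strategy is determined by its attack vector. Under the hypothesis $\nuMin_i = \nuMin$, $\W_i = \W$, I can write $\llovr(\xlpf) = \W\,\big(\max_i(\nuMin - \nulpf_i)\big)_+$ and likewise for $\UPF$. The crucial step is to rewrite \eqref{eq:nulpfupfRelationship} of \cref{lem:lpfAndUpf} as the per-node identity $\nuMin - \nuupf_i = (1+\slrrr)(\nuMin - \nulpf_i) - \slrrr(\nuMin - \nu_0)$, an increasing affine map whose slope and intercept are the same for every node. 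Because this map is node-independent it commutes with $\max_i$, giving $\max_i(\nuMin - \nuupf_i) = (1+\slrrr)\,g(\aa) + c$, where $g(\aa)\coloneqq \max_i(\nuMin - \nulpf_i(\aa,\phi))$ and $c\coloneqq \slrrr(\nu_0 - \nuMin)$ is a node-independent constant.

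Finally I would dispatch the two positive-part operators using \eqref{eq:positiveLovrCond}. The $\lpf$ master-problem is $\max_\aa \W\,(g(\aa))_+$ and the $\UPF$ one is $\max_\aa \W\,\big((1+\slrrr)g(\aa)+c\big)_+$. Since $\llovr(\xlpf(\psistarlpf,\phi))>0$, the optimal value of $g$ is strictly positive, so the $(\cdot)_+$ is inactive at the $\lpf$ optimum and the set of $\lpf$-optimal attack vectors equals $\argmax_\aa g(\aa)$; likewise $\llovr(\xupf(\psistarupf,\phi))>0$ makes $(\cdot)_+$ inactive at the $\UPF$ optimum, and since $(1+\slrrr)g+c$ is an increasing affine transform of $g$, the $\UPF$-optimal set is \emph{also} $\argmax_\aa g(\aa)$. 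Both optimal attack-vector sets therefore coincide, which, via the set-point reduction above, gives $\Psistarlpf(\phi)\equiv\Psistarupf(\phi)$.

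The main obstacle is this last reconciliation of the nonzero intercept $c$ with the $\max_i$ and $(\cdot)_+$ operators. The intercept drops out of the $\argmax$ precisely because (a) uniformity of $(\nuMin,\W)$ makes the affine map node-independent, so it passes cleanly through $\max_i$, and (b) \eqref{eq:positiveLovrCond} deactivates the positive part at both optima. Without either hypothesis the two optimal sets can genuinely differ: e.g.\ if every nodal violation is $\lpf$-negative then $\llovr(\xlpf)\equiv 0$ renders all attacks $\lpf$-optimal, while the $\UPF$ problem still discriminates among them through the intercept $c$.
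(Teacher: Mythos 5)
Your proof is correct and follows essentially the same route as the paper's: Part~1 via the uniform $(1+\slrrr)$-scaling of the voltage deviations $\Delta_\aa(\nu_i)$ from \cref{lem:pvCompromiseEffects}, and Part~2 by exploiting the node-independent affine relation \eqref{eq:nulpfupfRelationship} together with uniformity of $(\nuMin,\W)$ to commute through $\max_i$ and condition \eqref{eq:positiveLovrCond} to deactivate $(\cdot)_+$. The only difference is presentational --- the paper argues Part~2 by contradiction through a chain of equivalences that re-centers the deviations at $\nu_0$, whereas you directly identify both optimal attack-vector sets with $\argmax_\aa g(\aa)$ by making the intercept $c=\slrrr(\nu_0-\nuMin)$ explicit; the two are logically equivalent.
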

  
As we will see in \cref{subsec:greedy}, \cref{prop:generalApproach} forms the basis of our overall computational approach.

\subsection{A greedy approach for solving $\adlpf$, $\adupf$ and $\adnpf$}\label{subsec:greedy}

We now utilize results for sub- and master-problems to solve $\adnpf$. Consider the following assumption:
\begin{assumption}\label{assm:identicalRX}
DN has identical $\rxratio \equiv \k$ ratio, i.e., $\forall j\in\N, \k_j = \k$. 
\end{assumption}
In this subsection, we present an algorithm to solve $\adlpf$ and $\adupf$ under \assmref{assm:throughout} and  \assmref{assm:identicalRX}, and then propose its extension, a greedy iterative approach, for solving $\adnpf$ under the  general case.

Under \assmref{assm:throughout} and \assmref{assm:identicalRX}, the optimal defender set-points $\sgsetdstarlpf$ and $\sgsetdstarupf$ are as specified by \Cref{prop:pvSetpoints}, and hence fixed. For fixed optimal $\sgsetdstarlpf$ (resp. $\sgsetdstarupf$), we can solve the problem $\adlpf$ (resp. $\adupf$) by using Benders Cut method \cite{kevinwood}. However, we present a computationally faster algorithm,   \Cref{algo:greedyAlgorithm2} that computes attacker's candidate optimal attack vectors $\Dastarlpf$ (resp. $\Dastarupf$) using \Cref{lem:pvCompromiseEffects}.  

\begin{lemma}\label{lem:optimalAttackSetIndependentOfGamma}
Under \assmref{assm:throughout}, \assmref{assm:identicalRX}, for any two fixed $\gammalpf^1, \gammalpf^2 \in \Gamma$, $\myEquationStyle
\Dastarlpf(\small\begin{bmatrix}
\sgsetdstarlpf, \gammalpf^1 
\end{bmatrix}\normalsize) = \Dastarlpf(\small\begin{bmatrix}
\sgsetdstarlpf, \gammalpf^2 
\end{bmatrix}\normalsize)$. Same holds true for $\Dastarupf$. 
\end{lemma}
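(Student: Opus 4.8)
The plan is to show that the load-control vector $\gamma$ can enter $\Dimlpf(\phi)$ only through the objective $\W_i(\nuMin_i-\nulpf_i)$, and that there its effect is an additive, $\aa$-independent constant that cannot alter the $\argmax$. First I would fix the pivot node $i$ and note that, under \assmref{assm:identicalRX}, \cref{prop:pvSetpoints} pins the optimal defender set-points to $\sgsetdstarlpf$ (magnitude $\sgmax_i$, angle $\arccot\k$), a vector that does not depend on $\gamma$. Thus comparing $\phi^1=[\sgsetdstarlpf,\gammalpf^1]$ with $\phi^2=[\sgsetdstarlpf,\gammalpf^2]$ amounts to comparing two strategies whose defender set-point component is literally the same fixed vector. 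Moreover, under \assmref{assm:throughout} the domain constraints defining $\Xlpf$ (no reverse flows, safety bounds) hold throughout, so the feasible attack set in the $\argmax$ is all of $\Da$ for either $\gamma$; the feasible region therefore contributes no $\gamma$-dependence.

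Next I would invoke the superposition structure of the LPF voltage. By \eqref{eq:recursiveVoltageSquareLpf} the voltage at $i$ is linear in the net nodal injections $\sn_k=\sc_k-\sg_k$, and a compromise at node $j$ alters only the \emph{generation} at $j$, driving $\sgset_j$ from $\sgsetdstarlpf_j$ to $\mathbf{0}-\j\sgmax_j$ (\cref{thm:attackerSetpoints}) while leaving every load $\sc_k=\gammalpf_k\scdem_k$ untouched. Consequently
\begin{equation*}
\nulpf_i(\aa,[\sgsetdstarlpf,\gammalpf]) = \nulpf_i(\mathbf{0},[\sgsetdstarlpf,\gammalpf]) - \Delta_\aa(\nulpf_i),
\end{equation*}
where, by \eqref{eq:recursiveLPFVoltageSquare} and \eqref{eq:sumVoltageSquareLPF}, $\Delta_\aa(\nulpf_i)=\ssum_{j:\aa_j=1}2\Re(\conj{Z}_{ij}(\sgsetdstarlpf_j+\j\sgmax_j))$ depends only on the common-path impedances $\zz_{ij}$, the fixed set-points $\sgsetdstarlpf$, and the capabilities $\sgmax$ --- in particular it carries \emph{no} $\gamma$. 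This is the crux of the argument.

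Then I would substitute into the $\Dimlpf$ objective to split off the $\gamma$-dependence:
\begin{equation*}
\W_i(\nuMin_i-\nulpf_i(\aa,\cdot)) = \W_i(\nuMin_i - \nulpf_i(\mathbf{0},\cdot)) + \W_i\Delta_\aa(\nulpf_i).
\end{equation*}
For a fixed pivot $i$ the first term is a constant independent of the attack vector $\aa$, and the second term is $\gamma$-free. Hence $\argmax_{\aa\in\Da}\W_i(\nuMin_i-\nulpf_i) = \argmax_{\aa\in\Da}\W_i\Delta_\aa(\nulpf_i)$, and this maximizer set is identical for $\gammalpf^1$ and $\gammalpf^2$; that is, $\Dimlpf([\sgsetdstarlpf,\gammalpf^1])=\Dimlpf([\sgsetdstarlpf,\gammalpf^2])$. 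Taking the union over all pivots $i\in\N$ via \eqref{eq:defCandidateOptimalAttackSet} yields $\Dastarlpf([\sgsetdstarlpf,\gammalpf^1])=\Dastarlpf([\sgsetdstarlpf,\gammalpf^2])$. The $\UPF$ case is identical after replacing \eqref{eq:recursiveLPFVoltageSquare}, \eqref{eq:sumVoltageSquareLPF} by \eqref{eq:recursiveUPFVoltageSquare}, \eqref{eq:sumVoltageSquareUPF}; the extra factor $(1+\slrrr)$ is a positive constant and likewise drops out of the $\argmax$.

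The step I expect to be delicate rather than genuinely hard is confirming the additive voltage decomposition with a truly $\gamma$-independent $\Delta_\aa(\nulpf_i)$: it relies on (i) linearity of the LPF map and (ii) the fact that the attacker perturbs generation, not demand, so the $\gamma$-scaled loads appear only in the common no-attack baseline $\nulpf_i(\mathbf{0},\cdot)$. Once that decomposition is in hand, the insensitivity of the $\argmax$ to the additive constant --- and hence to $\gamma$ --- is immediate.
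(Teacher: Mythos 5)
Your proposal is correct and follows essentially the same route as the paper: the paper's proof is a one-line observation that the computation of $\Dastarlpf(\phi)$ depends only on the $\Delta_j(\nulpf_i)$ values, which by \cref{lem:pvCompromiseEffects} are functions of $\sgsetd$ (and the impedances and capabilities) but not of $\gamma$. Your additive decomposition of the objective into a $\gamma$-dependent, attack-independent baseline plus the $\gamma$-free term $\Delta_\aa(\nulpf_i)$ is precisely the detail the paper leaves implicit.
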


Given $\sgsetdlpf\in\S$, it can be checked that \Cref{algo:greedyAlgorithm2}, in fact, computes $\Dimlpf(\sgsetdlpf)$, and $\Dastarlpf(\sgsetdlpf) = \bigcup_{i\in\N} \Dimlpf(\sgsetdlpf)$ is the set of candidate optimal attack vectors. \tcb{The cardinality of the set $\Dimlpf$ (Line 4) in the worst-case can be as high as $\O(e^{\frac{n}{e}})$. Therefore, computing $\Dastarlpf$ can take $\O(n\exp{(\frac{n}{e})})$ time in the worst-case.} 

\Cref{algo:greedyAlgorithm2} computes the set of attacks $\Dastarlpf(\sgsetdstarlpf)$, and iterates over each $\aalpf \in \Dastarlpf(\sgsetdstarlpf)$. In each iteration, since $\sgsetd = \sgsetdstarlpf$ is fixed, the sub-problem $\adsplpf$ reduces to an LP over the variable $\gamma$. Let $\gammastarlpf(\aalpf)$ be the solution to the LP. Then, $\phistarlpf(\aalpf) = \small\begin{bmatrix}
\sgsetdstarlpf, \gammastarlpf(\aalpf) \end{bmatrix}\normalsize$ is the optimal solution to $\adsplpf$. Choosing $\aastarlpf = \argmax_{\aalpf\in\Dastarlpf} \Loss(\xlpf(\aalpf, \phistarlpf(\aalpf))) $, \Cref{algo:greedyAlgorithm2} computes the solution to be $(\aastarlpf, \phistarlpf(\aastarlpf))$ to the problem $\adlpf$. Similarly, we can use \Cref{algo:greedyAlgorithm2} to solve  $\adupf$.

\small
\begin{algorithm}[h!]

\begin{algorithmic}[1]\small
\State $(\aastarlpf, \phistarlpf, \Llpf) \gets $ \Call{Greedy-One-Shot}{\null}
\Procedure{Greedy-One-Shot}{\null}

	\State $\Llpf = 0$, $\aastarlpf = \textbf{0}$, $\gammastarlpf = \textbf{1}$, $\sgsetdstarlpf$ as in \Cref{prop:pvSetpoints}
	\State Let $\Dimlpf = \footnotesize\textsc{GetPivotNodeOptimalAttackSet}\small (i,\sgsetdstarlpf)$ 
	\State $\Dastarlpf = \bigcup_{i\in\N} \Dimlpf$ 
	\State For each $\aalpf \in \Dastarlpf$, compute $\gammastarlpf(\aalpf)$ by solving  $\adsplpf$ as an LP in $\gamma$. Let $\phistarlpf(\aalpf) =  \begin{matrix}
			 \sgsetdstarlpf, \gammastarlpf(\aa)
				\end{matrix}))$ 
	\State Let $\aastarlpf\coloneqq \argmax\limits_{\aalpf \in \Dastarlpf} \Losslpf(\xlpf(\aalpf, \begin{matrix}
			\gammastarlpf(\aalpf), \sgsetdstarlpf
			\end{matrix}))$  
	\State \Return $\aastarlpf, \phistarlpf = \phistarlpf(\aastarlpf), \Llpf = \Losslpf(\xlpf(\aastarlpf,\phistarlpf)) $ 
\EndProcedure
\Procedure{GetPivotNodeOptimalAttackSet}{$i$, $\sgsetd$}
	\State $(\J, \N_{\gi}, \m') \gets  $ \Call{OptimalAttackHelper}{$i$, $\sgsetd$}
	\State \Return $\Dim \gets \{\aalpf\in\Da | \aalpf_k = 1 \text{ \emph{iff} } k\in \J \cup \N', \text{ where } \N' \subseteq \N_{\gi} \text{ and } \abs{\N'} = M-m' \}$	
\EndProcedure 

\end{algorithmic}
\caption{Solution to $\adlpf$ for DNs with identical $\rxratio$} \label{algo:greedyAlgorithm2}
\end{algorithm}
\normalsize

\begin{theorem}\label{thm:greedyApproach}
Under \assmref{assm:throughout}, \assmref{assm:identicalRX}, let $(\aalpf, \philpf)$ be  a solution computed by \Cref{algo:greedyAlgorithm2}. Then $(\aalpf, \philpf)$ is also an optimal solution to $\adlpf$. Similar result holds for $\adupf$. 
\end{theorem}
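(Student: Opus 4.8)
The plan is to reduce $\adlpf$ to a pure attack-selection problem and then show that the attacker's optimum is attained inside the candidate set $\Dastarlpf$ that \Cref{algo:greedyAlgorithm2} enumerates. First I would invoke \Cref{thm:attackerSetpoints} to fix every compromised DER's set-point at $0-\j\sgmax_i$, so that the attacker's only remaining degree of freedom is the attack vector $\aalpf$. Next, under \assmref{assm:identicalRX}, \Cref{prop:pvSetpoints} fixes the defender's set-points at $\sgsetdstarlpf$ irrespective of $\aalpf$ and of the load-control vector $\gamma$; hence in Stage~3 only $\gamma$ remains free, and (as noted just before the algorithm) the sub-problem $\adsplpf$ becomes an LP in $\gamma$. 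With these reductions the sub-game is $\Llpf=\max_{\aalpf\in\Da}\min_{\gamma\in\Gamma}\Losslpf(\xlpf(\aalpf,[\sgsetdstarlpf,\gamma]))$, whereas \Cref{algo:greedyAlgorithm2} returns $\max_{\aalpf\in\Dastarlpf}\min_{\gamma\in\Gamma}\Losslpf$. Since $\Dastarlpf\subseteq\Da$, the algorithm's value is trivially $\le\Llpf$, so the entire content of the theorem is the reverse inequality.

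For the reverse inequality I would exploit the additive, load-independent structure of the attack. By \Cref{lem:pvCompromiseEffects}, $\nulpf_i(\aalpf,\gamma)=\nulpf_i(\mathbf 0,\gamma)-\sum_{j:\aalpf_j=1}\Delta_j(\nulpf_i)$, with each $\Delta_j(\nulpf_i)$ independent of $\gamma$. Two consequences follow. First, for every fixed $\gamma$ the term $\lvoll$ is constant in $\aalpf$, so maximizing $\Losslpf(\cdot,\gamma)$ is the same as maximizing $\llovr(\cdot,\gamma)$; by \Cref{prop:greedyOptimality} that maximizer is a single-pivot attack lying in $\Dastarlpf$, and by \Cref{lem:optimalAttackSetIndependentOfGamma} the set $\Dastarlpf$ does not depend on $\gamma$, so $\max_{\aalpf\in\Da}\Losslpf(\aalpf,\gamma)=\max_{\aalpf\in\Dastarlpf}\Losslpf(\aalpf,\gamma)$ for all $\gamma$. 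Second, $g(\aalpf):=\min_{\gamma}\Losslpf$ is monotone non-decreasing in the effect vector $(\Delta_{\aalpf}(\nulpf_i))_{i\in\N}$. These facts let me transfer the easy fixed-response statement to the sequential game by a single exchange step.

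Concretely, take a global optimum $(\aalpf^\star,\gamma^\star)$ with binding node $k$, i.e.\ $\llovr(\aalpf^\star,\gamma^\star)=\W_k(\nuMin_k-\nulpf_k(\aalpf^\star,\gamma^\star))_+$, and let $\aaklpf\in\Dastarlpf$ be the pivot-$k$ optimal attack. Because $\aaklpf$ maximizes the load-independent drop at $k$, we have $\nulpf_k(\aaklpf,\gamma)\le\nulpf_k(\aalpf^\star,\gamma)$ for \emph{all} $\gamma$. Writing $\gamma^{k}$ for the defender's best response to $\aaklpf$, the same-$\gamma$ comparison gives $g(\aaklpf)=\Losslpf(\aaklpf,\gamma^{k})$ and $\Losslpf(\aalpf^\star,\gamma^{k})\ge g(\aalpf^\star)=\Llpf$; hence it suffices to establish $\llovr(\aaklpf,\gamma^{k})\ge\llovr(\aalpf^\star,\gamma^{k})$, which would force $g(\aaklpf)\ge\Llpf$ and close the inequality. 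To prove this I would use the identical-$\rxratio$ proportionality $\Delta_j(\nulpf_i)=2(1+\sin(\arccot\k))\,\sgmax_j\,\abs{\zz_{ij}}$ that \Cref{lem:pvCompromiseEffects} yields under \assmref{assm:identicalRX}, together with the precedence ordering of per-node effects from \Cref{prop:downstream}, to argue that the pivot-$k$ attack also dominates $\aalpf^\star$ at whichever node binds $\llovr$ under the re-optimized response $\gamma^{k}$.

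The main obstacle is precisely this last inequality, which encodes the coupling created by the defender's $\min$ over $\gamma$. Because load shedding raises every nodal voltage, the node that binds $\llovr$ can migrate as $\gamma$ changes, so a single-pivot attack that is optimal against one response need not appear optimal against the re-optimized response; the crux is to show that confining the attacker to $\Dastarlpf$ never lets the defender escape more cheaply than against the true optimum. The $\gamma$-independence of the attack effects (\Cref{lem:pvCompromiseEffects}) and of the candidate set (\Cref{lem:optimalAttackSetIndependentOfGamma}) are exactly what keep this binding-node migration controllable. Finally, the statement for $\adupf$ follows by the same argument, since \Cref{lem:lpfAndUpf} and \Cref{prop:generalApproach} give $\Supf=(1+\slrrr)\Slpf$ and $\Dastarupf\equiv\Dastarlpf$, so every step transfers verbatim.
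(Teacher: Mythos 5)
Your proposal follows the same route as the paper's proof: fix the attacker set-points by \Cref{thm:attackerSetpoints}, fix the defender set-points at $\sgsetdstarlpf$ by \Cref{prop:pvSetpoints}, reduce the Stage-3 response to an LP in $\gamma$, and argue that the max--min optimum is attained inside the $\gamma$-independent candidate set $\Dastarlpf$ so that exhaustive enumeration over $\Dastarlpf$ (what \Cref{algo:greedyAlgorithm2} does) is exact. All of your reductions up to the exchange step are correct, and you have correctly identified where the real content of the theorem lies: \Cref{prop:greedyOptimality} only certifies optimality of a candidate attack against a \emph{fixed} response $\phi$, whereas the theorem needs optimality after the defender re-optimizes $\gamma$ against the candidate. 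The paper's own proof dispatches this in one sentence by citing \Cref{lem:optimalAttackSetIndependentOfGamma} and \Cref{prop:greedyOptimality} directly (in effect treating the $\gamma$-independence of $\Dastarlpf$ as sufficient to commute the quantifiers), so in that sense you have been more careful than the recorded argument.

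The gap is that your crux inequality, $\llovr(\xlpf(\aaklpf,\gamma^k)) \ge \llovr(\xlpf(\aastarlpf,\gamma^k))$, is never actually established; you only assert that the identical-$\rxratio$ proportionality and \Cref{prop:downstream} ``would'' yield it. The pivot-$k$ attack $\aaklpf$ maximizes the ($\gamma$-independent) voltage drop at node $k$ only, so the one thing you can conclude is $\nulpf_k(\aaklpf,\gamma^k) \le \nulpf_k(\aastarlpf,\gamma^k)$, i.e.\ $\llovr(\xlpf(\aaklpf,\gamma^k)) \ge W_k(\nuMin_k - \nulpf_k(\aastarlpf,\gamma^k))_+$. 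If the $\max$ defining $\llovr(\xlpf(\aastarlpf,\gamma^k))$ is attained at some node $m \ne k$ --- which is exactly the binding-node migration you flag, since $k$ was chosen as the binding node under $\gamma^\star$, not under $\gamma^k$ --- then nothing in your argument bounds $W_m(\nuMin_m - \nulpf_m(\aastarlpf,\gamma^k))_+$ from above by anything $\aaklpf$ achieves, and choosing $k$ to be the binding node under $\gamma^k$ instead is circular because $\gamma^k$ depends on $k$. Closing this requires an additional structural argument (e.g.\ a monotonicity property of $g(\aa)=\min_\gamma\Losslpf$ in the full damage vector $(\Delta_{\aa}(\nulpf_i))_{i\in\N}$ together with a domination argument specific to the identical-$\rxratio$ form $\Delta_j(\nulpf_i) \propto \sgmax_j\abs{\zz_{ij}}$), and until that is supplied the proposal does not constitute a complete proof.
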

\begin{proof} 
\def \aaistarlpf {\widehat{\aa}^{i\star}}
Under \assmref{assm:identicalRX}, $\sgsetd = \sgsetdstarlpf$ is fixed (\Cref{prop:pvSetpoints}). Then, for any $\gamma \in \Gamma$, by \Cref{lem:optimalAttackSetIndependentOfGamma} and \Cref{prop:greedyOptimality}, the optimal attack $\aastarlpf$ belongs to the set $\Dastarlpf(\sgsetdstarlpf)$.  \Cref{algo:greedyAlgorithm2} iterates over the attack vectors $\aa \in \Dastarlpf$, computes $\gammastarlpf(\aa)$ by solving an LP, and calculates the loss  $\Losslpf(\xlpf(\aa, \phistarlpf(\aa)))$. Finally, it returns the solution corresponding to the maximum loss. Similar logic applies for optimal solution of $\adupf$. 
\end{proof}

We, now, describe an iterative greedy approach to compute the solution to $\adnpf$  that uses the optimal attacker strategy for fixed defender response (refer \Cref{algo:greedyAlgorithm}). 

\Cref{algo:greedyApproach} initializes $\phi_c$ to the optimal defender response under no attack. In the first step of the iterative approach, the attacker assumes some  defender response $\phi_c$ to be fixed, and computes the optimal attack strategy $\aa_c(\phi_c)$ using the greedy  \Cref{algo:greedyAlgorithm}. Then in the second step, the defender computes a new defense strategy $\phi_c$ optimal for fixed $\aa_c$ by solving the SOCP, and updates the defender response. If $\Loss(\xnpf(\aa_c,\phi_c)) > \Loss(\xnpf(\aastar,\phistarnpf))$, then the current best solution $(\aastar,\phistarnpf)$ is updated to $(\aa_c,\phi_c)$. Then in the next iteration, the attacker uses this new defender response to update his attack strategy, and so on and so forth. If this $\aa_c$ has already been discovered in some previous iteration, the algorithm terminates successfully, with $\aastar,\phistarnpf$ as the required optimal attack plan, and the corresponding optimal defense. The algorithm terminates unsuccessfully if the number of iterations exceeds a maximum limit.


\def \ds {\Upsilon}


\begin{algorithm}[h]
\small
\begin{algorithmic}[1]
\def \lstar {\l^\star}
\def \phistar {\phi^\star}
\def \aac {\aa_c}
\def \phic {\phi_c}
\State $(\aastar, \phistar, \Lnpf) \gets $ \Call{Greedy-Iterative}{\null}
\Procedure{Greedy-Iterative}{}
\State Let $\aastar \gets \textbf{0}, \lstar  \gets  0, \aac \gets \textbf{0}, iter \gets  0, \ds  \gets  \emptyset, \phic, \phistar, \ds $
\State \label{state:computeDefenderResponse1} For $\aa = \aac$, compute $\phistarnpf$ by solving SOCP $\adspnpf$  (\cref{lem:convexRelaxationOptimalSolution})
\State $\phic \leftarrow \phistarnpf, \Lnpf^\star  \leftarrow \Loss(\xcpf(\aa,\phistarnpf))$

\For {$iter  \gets  0,1,\dots,maxIter$}
	
	\State $\aac  \gets $ \Call{OptimalAttackForFixedResponse}{$\phi_c$}
		
		\State \hspace{-0.1cm}\myComment{If  $\aac$ previously found, successfully terminate}	
		\State \textbf{if} $\aac \in \ds$ \textbf{then return } $\aastar, \phistar$   
		\State \textbf{else }$\ds = \ds \cup \{\aac \}$ \hspace{-0.6cm} \myComment {\footnotesize  Store the current best attack vector}
		
	\State \label{state:computeDefenderResponse2} Compute $\phic$ by solving SOCP $\adspnpf$ \cref{lem:convexRelaxationOptimalSolution}
	
	\If {$\Loss(\xcpf(\aac,\phic)) > \lstar$ }
		\State $\aastar  \gets  \aac, \phistar  \gets  \phic, \Lnpf^\star  \gets  \Loss(\xcpf(\aa,\phistarnpf))$
	\EndIf
	
\EndFor \hspace{1cm} \myComment { Maximum Iteration Limit reached}

\State Return $\aastar, \phistar, \Lnpf^\star$ \hspace{0.3cm} \myComment { Return the last best solution}

\EndProcedure \hspace{0.7cm} \myComment { Algo terminates unsuccessfully}
\end{algorithmic}
\caption{Iterative Algorithm for Greedy Approach}\label{algo:greedyApproach}
\normalsize
\end{algorithm}

Note that in each iteration, the size of $\ds$ increases by 1, hence, the algorithm is bound to terminate after exhausting all possible attack vectors. 

\Cref{prop:generalApproach} and \Cref{thm:greedyApproach} can be applied for  any $\ad \in \Dd$, since if the DN has identical $\rxratio$ ratio, $\sgsetd$ are also fixed.  

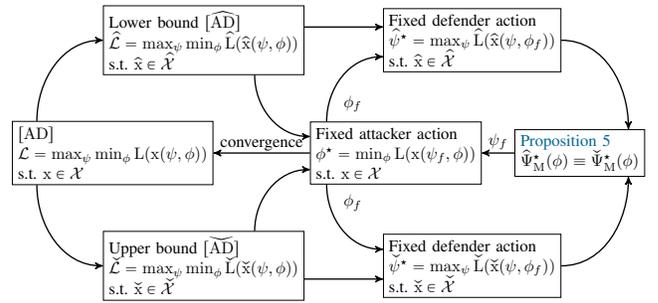
\begin{figure}[h]
\centering
\resizebox{8.5cm}{!}{
\begin{tikzpicture}
	\tikzstyle{every node}=[font=\normalsize]
	\tikzstyle{fde}=[circle,draw]
	\tikzset{
	    >=stealth',
	    punkt/.style={
	           rectangle,
	           rounded corners,
	           draw=black, very thick,
	           text width=6.5em,
	           minimum height=2em,
	           text centered},
	    pil/.style={
	           ->,
	           thick,
}
	}
	
\def\myXshift{4.5em}
\def\myXXXshift{4em}
\def\myXXshift{-3.5em}
\def\myYshift{2.2em}

\node[text depth = 3cm,minimum width=8.5cm,font=\normalsize,align=left] (main){};

\node[draw,align=left](lb) at ([xshift=\myXshift,yshift=\myYshift]main.north west){ Lower bound $\adlpf$\\
$\Llpf = \max_\psi\min_\phi\Losslpf(\xlpf(\psi,\phi))$\\
s.t. $\xlpf \in \Xlpf$ };

\node[draw,align=left](fdalb) at ([xshift=\myXXshift,yshift=\myYshift]main.north east){Fixed defender action\\
$\psistarlpf = \max_\psi\Losslpf(\xlpf(\psi,\phi_f))$\\
s.t. $ \xlpf \in \Xlpf $ };
 
 \node[draw,align=left](ub) at ([xshift=\myXshift,yshift=-\myYshift]main.south west){ Upper bound $\adupf$\\
 $\Lupf = \max_\psi\min_\phi\Lossupf(\xupf(\psi,\phi))$\\
 s.t. $\xupf \in \Xupf$ };
 
 \node[draw,align=left](fdaub) at ([xshift=\myXXshift,yshift=-\myYshift]main.south east){Fixed defender action\\
 $\psistarupf = \max_\psi\Lossupf(\xupf(\psi,\phi_f))$\\
 s.t. $ \xupf \in \Xupf $ };
 
  \node[draw,align=left](org) at ([xshift=-1em,yshift=0em]main.west){$\adnpf$ \\ 
  $\Lnpf = \max_\psi\min_\phi\Loss(\xnpf(\psi,\phi))$\\
  s.t. $\xnpf \in \Xnpf$ };
  
  \node[draw,align=left](faa) at ([xshift=\myXXXshift,yshift=0em]main.center){Fixed attacker action\\
$\phistarnpf = \min_\phi\Loss(\xnpf(\psi_f,\phi))$\\
s.t. $\xnpf \in \Xnpf$ };
  
    \node[draw,align=left](result) at ([xshift=3em,yshift=0em]main.east){
 \cref{prop:generalApproach}\\
$\Psistarlpf(\phi) \equiv \Psistarupf(\phi)$  };
 
 \node[align=left](phi1) at ([xshift=1.4em,yshift=3em]main.center){$\phi_f$};
 
  \node[align=left](phi2) at ([xshift=1.4em,yshift=-3em]main.center){$\phi_f$};

\draw[pil] (org.north west)+(1.5em,0) to [pil,out=90,in=180] (lb.west);
\draw[pil] (org.south west)+(1.5em,0) to [pil,out=270,in=180] (ub.west);

\draw[pil] ([yshift=0.8em]lb.east) to [] ([yshift=0.8em]fdalb.west);
\draw[pil] ([yshift=-0.8em]ub.east) to [] ([yshift=-0.8em]fdaub.west);

\draw[pil] (faa) -> (org) node[midway,yshift=0.5em]{convergence};
\draw[pil] (result) -> (faa) node[midway,yshift=0.7em]{$\psi_f$};

\draw[pil] ([yshift=0em]fdalb.east) to [out=0,in=90] ([xshift=-1em]result.north east);
\draw[pil] ([yshift=0em]fdaub.east) to [out=0,in=270] ([xshift=-1em]result.south east);

\draw[pil] ([xshift=-3em]lb.south east) to [out=270,in=180] ([yshift=1em]faa.west);
\draw[pil] ([xshift=-3em]ub.north east) to [out=90,in=180] ([yshift=-1em]faa.west);

\draw[pil] ([xshift=1em]faa.north west) to [out=90,in=180] ([yshift=-1em]fdalb.west);
\draw[pil] ([xshift=1em]faa.south west) to [out=270,in=180] ([yshift=1em]fdaub.west) ;

\end{tikzpicture}}
\caption{Overall computational approach.} \label{fig:generalApproach}
\end{figure}
  

 Our overall computational approach to solving the problem $\adnpf$, thus far, can be summarized as in \cref{fig:generalApproach}. Given an instance of the problem $\adnpf$, we first solve the problems $\adlpf$ and $\adupf$. For this, we employ an iterative procedure that iterates between the master- and sub- problems. For a fixed attacker action we determine the optimal defender response $\phi$ for the $\adspnpf$ using the convex relaxation of \eqref{eq:currentMagnitudeNpf}. Then, for the fixed defender response $\phi$, we compute the optimal attacker strategies $\psistarlpf$ and $\psistarupf$ by solving $\admplpf$ and $\admpupf$, respectively. \cref{prop:generalApproach} provides us an useful result that $\psistarnpf(\phi) \coloneqq \psistarlpf = \psistarupf$. This optimal attacker strategy $\psistarnpf(\phi)$ is then fed back to the master- problem $\admpnpf$. This procedure is repeated until we reach a convergence or we exceed the maximum iteration limit. 

\section{Securing DERs To Worst-Case Attacks}
\label{sec:securityDesign}

In this section, we consider the defender problem of optimal security investment in Stage 1. For simplicity, we restrict our attention to DNs that satisfy the following assumption: 
\begin{assumption}\label{assm:balancedTree}
\textbf{Symmetric Network.} For every $i\in\N$, for any two nodes $j,k\in\child_i$, $\Lambda_j$ and $\Lambda_k$ are symmetrically identical about node~$i$. That is, $z_j = z_k$, $\abs{\child_j} = \abs{\child_k}$, $\scdem_j = \scdem_k$, $\nuMin_j = \nuMin_k$, $\W_j = \W_k$, and $\c_j = \c_k$. However, all the DERs are homogeneous, i.e., $\forall\;j,k\in\N,\; \sgmax_j = \sgmax_k$. 
\end{assumption}

\def \ns {n_s}
\def \adone {\ad^1}
\def \adtwo {\ad^2}

Let $B$ be a fixed security budget. Let $\ad, \adnew \in \Dd$, $\ad \ne \adnew$, be two security strategies. Strategy $\ad$ is \emph{more secure} than strategy $\adnew$ (denoted by $\ad \securePreceq \addash$) under NPF (resp. LPF), if $\Lunpf \le \Lunpfnew$ (resp. $\Lulpf \le \Lulpfnew$). Finally, we  ask what is the best security strategy $\adstar$, such that for $\ad = \adstar$, $\Lunpf$ is minimized. \cref{fig:designs} shows two possible security strategies $\adone$ (\cref{fig:design1}) and $\adtwo$ (\cref{fig:design2}), and gives a generic security strategy (\cref{fig:design2Tree}). If we compare $\adone$ and $\adtwo$, while transitioning from $\adone$ to strategy $\adtwo$, 3 secure nodes in $\Lambda_2$ subtree go up a level each, while 3 secure nodes in $\Lambda_3$ subtree go down a level each. Then, between $\adone$ and $\adtwo$, which strategy is more secure? In this section, we provide insights about optimal security strategies 
under \assmref{assm:balancedTree}, which help show that $\adtwo$   is more secure than $\adone$.

\def \hh {2cm} 
\begin{figure}[h!]
\centering 
\subfloat[Security strategy $\adone$. $\Ns(\adone) = \{3,5,6,7,10,11\}$.]{
\includegraphics[width=3.5cm,height=\hh]{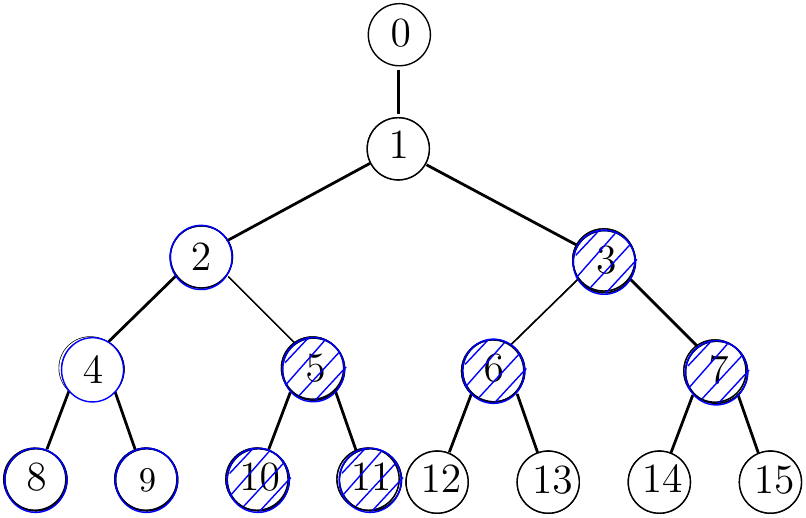}\label{fig:design1}
}\hspace{0.5cm}
\subfloat[Security strategy $\adtwo$. $\Ns(\adtwo) = \{2,4,5,6,12,14\}$.]{
\includegraphics[width=3.5cm,height=\hh]{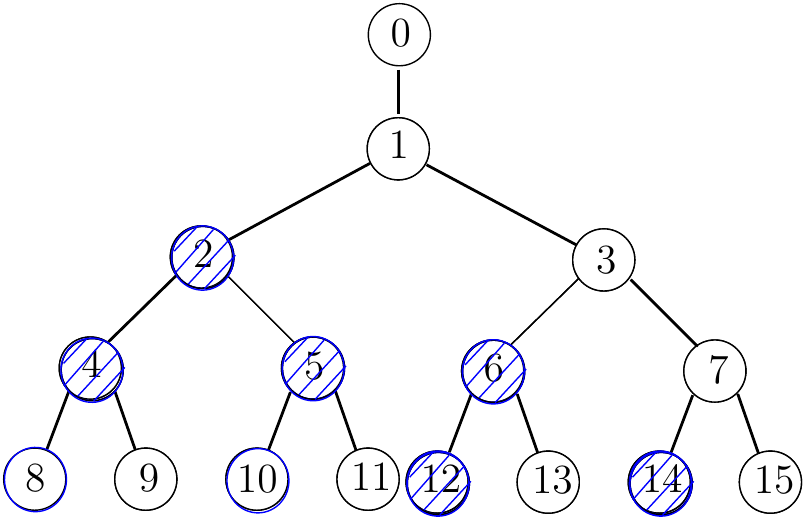}\label{fig:design2}
}\\
\subfloat[A generic security strategy on a tree DN.]{\includegraphics[width=6cm]{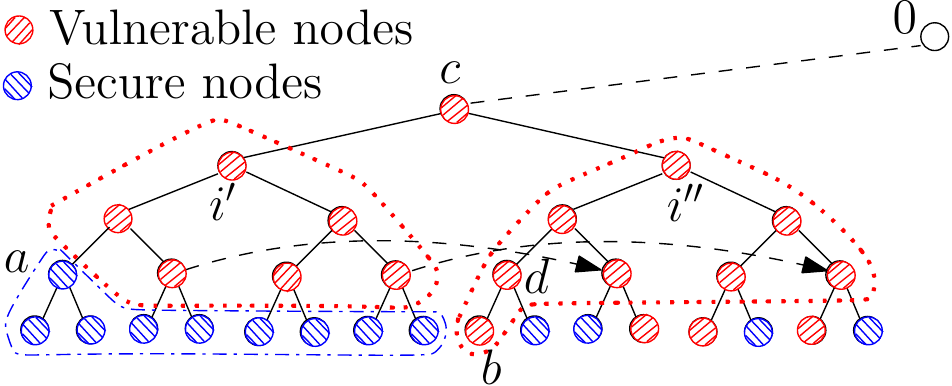}\label{fig:design2Tree}}
\caption{Different defender security strategies. } 
\label{fig:designs}
\end{figure} 

 \Cref{algo:securityAlgorithm} computes an optimal security strategy $\dadlpf$ under \assmref{assm:throughout}-\assmref{assm:balancedTree}. It initially assigns all nodes to be vulnerable. Then, DER nodes are secured sequentially in a bottom-up manner towards the root node. If the security budget is not adequate to secure a full level, the nodes in that level are uniformly secured and the remaining nodes are not secured. \tcb{Under all the assumptions of \Cref{algo:securityAlgorithm}, it takes $\O(n)$ time. }

\def \nnh {\alpha} 
\begin{algorithm}[h!]

\begin{algorithmic}[1]
\small
\State $\adstarlpf \gets $\Call{OptimalSecurityStrategy}{\null}
\Procedure{OptimalSecurityStrategy}{\null}

\def \step {step}
\State $\ns \gets 0, \level \gets \Pmax$, $\ulpf \gets \textbf{0}$ \myComment{Initialize all nodes to vulnerable nodes}
\State For each $\level\in [1,2,\dots,\Pmax]$, let $\nnh_\level \gets \ssum_{j = \level}^\Pmax \abs{\N_j}$ 
\State Let $\level' \gets \argmax_{h\in [1,\dots,\Pmax]:\nnh_\level\ge M} h$
\State Let $\forall\;\level\in[\level',\dots,\Pmax],\forall\;i\in\N_\level$, $\ulpf_i \gets 1$.  
\State Let $\N_{\level'}' \subseteq \N_{\level'}$ be a set of uniformly chosen $M-\nnh_{\level'+1}$ nodes on level $\level'$. 
\State For each $i\in\N_{\level'}'$, $\ulpf_i \gets 1$ 
%
%
\State \Return $\ulpf$
\EndProcedure
\end{algorithmic}
\normalsize
\caption{Optimal security strategy}
\label{algo:securityAlgorithm}
\end{algorithm}

In the following theorem, we show that the security strategy computed by \Cref{algo:securityAlgorithm} is an optimal solution to the Stage 1 of the $\dadlpf$ and $\dadupf$ problem. 
\begin{theorem}\label{thm:optimalSecureDesign}
Assume \assmref{assm:throughout},  \assmref{assm:identicalRX}, \assmref{assm:balancedTree}. Let  $\adstarlpf$ be the security strategy computed by \cref{algo:securityAlgorithm}. Furthermore, with $\ad = \adstarlpf$, let $(\psistarlpf,\phistarlpf)$ be the solution computed by \cref{algo:greedyAlgorithm2}. Then,  $(\adstarlpf,\psistarlpf,\phistarlpf)$ is an optimal solution to $\dadlpf$. Similar result holds for $\dadupf$.  
\end{theorem}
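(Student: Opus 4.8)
The plan is to reduce the Stage~1 minimization to a comparison of sub-game values and then prove optimality of the bottom-up strategy produced by \cref{algo:securityAlgorithm} via a downward exchange argument that relies on the symmetry \assmref{assm:balancedTree}. The first step uses the fact that $(\psistarlpf,\phistarlpf)$ is computed by \cref{algo:greedyAlgorithm2}: by \cref{thm:greedyApproach}, for \emph{any} fixed defender strategy $\ad\in\Dd$ this pair solves the sub-game $\adlpf$ exactly, attaining the value $\Lulpf$. Consequently the outer ($\min_{\ad}$) problem of $\dadlpf$ reduces to $\min_{\ad\in\Dd}\Lulpf$, and it suffices to prove $\adstarlpf\securePreceq\ad$ for every $\ad\in\Dd$, i.e. that $\adstarlpf$ minimizes the sub-game value; optimality of the full triple $(\adstarlpf,\psistarlpf,\phistarlpf)$ is then immediate. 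The claim for $\dadupf$ follows verbatim after replacing $\nulpf$ by $\nuupf$ and invoking the $\UPF$ parts of the cited results.

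Next I would make the attacker's best response explicit for a fixed $\ad$. Under \assmref{assm:identicalRX} the defender set-points are pinned down by \cref{prop:pvSetpoints}, and by \cref{lem:optimalAttackSetIndependentOfGamma} the optimal attack set does not depend on the load-control vector $\gamma$, so the attacker's problem is to maximize the weighted voltage violation $\W_i(\nuMin_i-\nulpf_i)$ over a pivot node $i$ and a feasible attack vector $\aa$. By \cref{lem:pvCompromiseEffects} this violation is additive, $\Delta_\aa(\nulpf_i)=\sum_{j:\aa_j=1}\Delta_j(\nulpf_i)$, and by \cref{prop:downstream} the single-node contribution $\Delta_j(\nulpf_i)$ is weakly larger when $j$ lies further downstream relative to $i$ (it is governed by the common-path impedance $\zz_{ij}$). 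Hence, for each pivot, the worst attack compromises the $\arcm$ deepest vulnerable nodes lying below it, and the sub-game value is the maximum of this quantity over all pivots; this is exactly what \cref{algo:greedyAlgorithm2} evaluates through \cref{prop:greedyOptimality}.

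With this characterization I would prove $\adstarlpf\securePreceq\ad$ by a downward exchange argument. Using \assmref{assm:balancedTree}, permuting secured nodes among symmetrically identical sibling subtrees leaves $\Lulpf$ unchanged, so I may normalize any strategy to secure nodes uniformly across symmetric branches and compare strategies only through their per-level counts of secured nodes. Starting from an arbitrary $\ad$ that is not of the bottom-up form, there exist a secured node $s$ and a vulnerable node $v$ with $\level_v>\level_s$; by symmetry I would realize $v$ as a descendant in $s$'s branch, secure $v$, and release $s$. To bound the new value, fix any pivot $i$ and any post-swap attack $\aa$: if $\aa$ does not use $s$ it was already feasible before the swap; if it uses $s$, then replacing $s$ by $v$ produces a pre-swap-feasible attack whose loss is at least as large because $\Delta_v(\nulpf_i)\ge\Delta_s(\nulpf_i)$ (equality when $i$ sits outside the branch, where $\zz_{is}=\zz_{iv}$, and $\ge$ by \cref{prop:downstream} when $i$ sits inside). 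Thus the worst post-swap attack never exceeds the worst pre-swap attack, so the swap does not increase $\Lulpf$. Iterating these downward shifts terminates precisely at the level profile output by \cref{algo:securityAlgorithm} (full deepest levels, then a uniformly secured partial level of $B-\sum_{h>\level'}\abs{\N_h}$ nodes), establishing optimality.

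The hard part is making this exchange step watertight, since the attacker re-optimizes both the pivot and the attack set after every swap. The crux is that the bare depth relation $\level_v>\level_s$ does \emph{not} by itself give $\Delta_v(\nulpf_i)\ge\Delta_s(\nulpf_i)$ for all pivots $i$: for a pivot sitting below $s$ but outside $v$'s subtree the inequality can reverse, because the comparison is controlled by the common-path impedances $\zz_{is}$ and $\zz_{iv}$ rather than by raw depth. This is exactly where \assmref{assm:balancedTree} is indispensable, as symmetry lets me relocate the swapped nodes into a single ancestor-descendant chain (so $s\preceq_i v$ holds for every relevant pivot) and match the attacker's clustered responses branch-by-branch, reducing the whole comparison to per-level counts. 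The uniformly secured partial top level requires the same argument applied to a boundary case and is where I would be most careful.
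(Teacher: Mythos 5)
Your proposal is correct and follows essentially the same route as the paper's proof: the reduction of Stage~1 to comparing sub-game values via \cref{thm:greedyApproach}, the characterization of the attacker's best response through \cref{prop:pvSetpoints}, \cref{lem:pvCompromiseEffects}, \cref{lem:optimalAttackSetIndependentOfGamma}, \cref{prop:greedyOptimality} and \cref{prop:downstream}, and the iterated downward-exchange argument under \assmref{assm:balancedTree} are exactly the paper's sequence of supplementary propositions (descendant swap, level-based swap via maximal common path, and uniform distribution on the partial top level). Your identification of the key subtlety --- that raw depth does not give the pointwise inequality on $\Delta_j(\nulpf_i)$ for all pivots, so symmetry is needed to reduce the swap to an ancestor--descendant chain --- is precisely the role the symmetry assumption plays in the paper's argument.
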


Finally, we state the following result:
\begin{proposition} 
1) Under  \assmref{assm:throughout}, \assmref{assm:identicalRX}, \assmref{assm:balancedTree}, $\Dastarlpf$ can be partitioned into at most $\abs{\Nv} \le \NN$ equivalence classes of attack vectors, one for each vulnerable node considered as pivot node. Any two attack vectors in the same equivalence class has identical impact on the corresponding pivot node. Additionally, any two equivalence classes can be considered homomorphic transformations of each other.   

2) Under \assmref{assm:throughout}, \assmref{assm:identicalRX}, if $\forall\;i,j,k\in\N$ such that $\sgmax_j > 0$ and  $\sgmax_k > 0$, $\Delta_j(\nulpf_i) \ne \Delta_k(\nulpf_i)$, then $\abs{\Dastarlpf} = \abs{\Nv}$, i.e., if for any pivot node, no two DERs have identical impact on the pivot node due to their individual DER compromises, then each equivalence class is a singleton set, and hence, the set for candidate optimal attack vectors is at most of size $\abs{\Nv}$. 
	
\label{prop:complexityOfDadlpf}
\end{proposition}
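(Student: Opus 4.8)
The plan is to exploit the explicit structure of each pivot-optimal attack set $\Dimlpf$ produced by \cref{algo:greedyAlgorithm2}, together with the strict downstream monotonicity of \cref{prop:downstream}. First I would recall, via \cref{lem:pvCompromiseEffects} and the helper procedure invoked in \cref{algo:greedyAlgorithm2}, that for a fixed pivot $i$ the set $\Dimlpf$ selects the $\arcm$ vulnerable nodes with the largest single-compromise effects $\Delta_j(\nulpf_i)$: a committed set $\J$ of strictly-largest effects is always included, and the remaining $\arcm - m'$ targets are drawn arbitrarily from a tie set $\N_{\gi}$ on which all the $\Delta_j(\nulpf_i)$ coincide. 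Consequently every $\aa \in \Dimlpf$ yields the same aggregate effect $\Delta_\aa(\nulpf_i) = \sum_{j:\aa_j = 1}\Delta_j(\nulpf_i)$, which is exactly the ``identical impact on the pivot node'' assertion; I would then call two attack vectors equivalent when they share such a common pivot, so that each $\Dimlpf$ is a single equivalence class and $\Dastarlpf = \bigcup_{i\in\N}\Dimlpf$ is covered by them.

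For Part~1, I would bound the number of distinct classes by $\abs{\Nv}$. Under \assmref{assm:identicalRX} the defender set-points are fixed and uniform across homogeneous DERs (\cref{prop:pvSetpoints}), so \cref{prop:downstream} applies: $\Delta_j(\nulpf_i)$ is strictly increasing as $j$ moves downstream relative to $i$ (in $\prec_i$) and is constant across nodes at the same precedence level relative to $i$. Hence the top-$\arcm$ selection defining $\Dimlpf$ depends on the pivot $i$ only through the relative ordering $\preceq_i$ of the vulnerable nodes, and since only vulnerable nodes are ever targeted, the optimal attack for a secured pivot coincides with the one obtained by taking its deepest relevant vulnerable node as pivot. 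This lets me index the classes by vulnerable pivot nodes, yielding at most $\abs{\Nv}$ of them. The homomorphism claim I would obtain from \assmref{assm:balancedTree}: an automorphism $\pi$ of $\G$ permuting symmetrically-identical subtrees satisfies $\Delta_j(\nulpf_i) = \Delta_{\pi(j)}(\nulpf_{\pi(i)})$, hence carries $\Dimlpf$ bijectively onto $\widehat{\D}_{\arcm}^{\pi(i)}$; the automorphism group therefore realizes the classes associated with symmetric pivots as structure-preserving images of one another.

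For Part~2, the no-tie hypothesis $\Delta_j(\nulpf_i) \ne \Delta_k(\nulpf_i)$ (for distinct $j,k$ with $\sgmax_j,\sgmax_k > 0$) forces each tie set $\N_{\gi}$ to be empty, so the top-$\arcm$ set is uniquely determined and $\abs{\Dimlpf} = 1$ for every pivot; thus each equivalence class is a singleton. Combining this with the class-count bound of Part~1 — whose reduction to vulnerable pivots uses only the monotonicity of \cref{prop:downstream} and hence already holds under \assmref{assm:throughout},\assmref{assm:identicalRX} without symmetry — yields $\abs{\Dastarlpf} \le \abs{\Nv}$, as claimed.

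The main obstacle is the combinatorial step in Part~1 that collapses the $\NN$ possible pivots down to the $\abs{\Nv}$ vulnerable ones: I must show carefully that the arbitrary choice of $\arcm - m'$ nodes from the tie set $\N_{\gi}$ never produces an attack optimal for a secured pivot but for no vulnerable pivot, and that the ``deepest targeted vulnerable node'' assignment is well defined. This bookkeeping around ties is precisely what the strict inequality in \cref{prop:downstream} controls and what the hypothesis of Part~2 removes, so I expect the tie analysis to be the delicate part, with the monotonicity and symmetry arguments being routine given the earlier results.
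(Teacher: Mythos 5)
The paper itself does not contain a proof of \cref{prop:complexityOfDadlpf} in the provided source --- it is one of the results whose proof is deferred to the online supplementary material --- so there is no in-paper argument to compare yours against line by line; I can only assess your proposal on its own terms. The parts of your plan that are solid: the observation that every $\aa\in\Dimlpf$ has the same aggregate impact $\Delta_\aa(\nulpf_i)$ on its pivot (this is immediate from \cref{lem:pvCompromiseEffects} and the $\J\cup\N'$ structure of the helper procedure), the tree-automorphism argument for the ``homomorphic transformation'' clause under \assmref{assm:balancedTree}, and the implication ``no ties $\Rightarrow$ each $\Dimlpf$ is a singleton'' in Part~2. Those steps are routine and correct.

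The genuine gap is exactly the step you flag as delicate, and it is worse than a bookkeeping issue: your claim that the collapse from $\NN$ pivots to $\abs{\Nv}$ vulnerable pivots ``uses only the monotonicity of \cref{prop:downstream} and hence already holds under \assmref{assm:throughout}, \assmref{assm:identicalRX} without symmetry'' is false. \Cref{prop:downstream} compares two \emph{attacked} nodes for a \emph{fixed} pivot, and only under the hypothesis $\sgsetd_j=\sgsetd_k$, $\sgmax_j=\sgmax_k$; it says nothing about how the top-$\arcm$ set changes when the \emph{pivot} moves, which is what your reduction needs. Under \assmref{assm:identicalRX} one has $\Delta_j(\nulpf_i)=2\xx_{ij}c_j$ with $c_j$ proportional to $\sgmax_j$, and with heterogeneous capacities (permitted in Part~2, where \assmref{assm:balancedTree} is dropped) one can build a small tree with a secured pivot $i$ whose unique top-$\arcm$ set $A$ contains nodes from two different branches, such that re-pivoting at either member of $A$ promotes a nearby non-member past the other member of $A$; then $A\in\Dimlpf$ but $A\notin\widehat{\D}_{\arcm}^{j}$ for every vulnerable $j$, even though the no-tie hypothesis of Part~2 holds. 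So the ``one class per vulnerable pivot'' indexing, and hence your derivation of the $\abs{\Nv}$ bound in Part~2, does not follow from the ingredients you cite. To close Part~1 you genuinely need the rigidity of \assmref{assm:balancedTree} (homogeneous DERs and symmetric subtrees, under which $\Delta_j(\nulpf_i)$ depends only on the depth of the meet of $i$ and $j$, so re-pivoting at the deepest targeted node provably preserves the top-$\arcm$ set); and for Part~2 you need a separate argument for the cardinality bound rather than inheriting the pivot-reduction from Part~1.
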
 
%

By \cref{thm:optimalSecureDesign}, we can compute the optimal security investment $\aastarlpf$ in $\mathcal{O}(\NN)$, and by \cref{prop:complexityOfDadlpf}, for fixed $\aastarlpf$, we can compute the optimal attacker strategy $\psistarlpf$ in $\mathcal{O}(\NN)$. Finally, for fixed $\aastarlpf$ and $\psistarlpf$, we can compute the optimal defender response $\phistarlpf$ in $\mathcal{O}(poly(\NN))$. Hence, we can compute the optimal solution for $\dadlpf$, in $\mathcal{O}(poly(\NN))$. Same holds for $\dadupf$.

Admittedly, our structural results on optimal security investment in Stage~$1$ of the game are specific to assumption~\assmref{assm:balancedTree}. Future work involves extending these results to a general radial DN with heterogeneous DER nodes. A key aspect in effort will be to understand how the defender's net value of securing an individual DER node depends on its capacity and location in the DN. 

\section{Computational Study}
\label{sec:caseStudy}

We describe a set of computational experiments to evaluate the performance of the iterative Greedy Approach (GA) in solving $\adnpf$; see \Cref{algo:greedyApproach}. We again assume $\ad = \textbf{0}$. We compare the optimal attack strategies and optimal defender set-points obtained from GA with the corresponding solutions obtained by conducting an exhaustive search (or Brute Force (BF)), and by implementing the Benders Cut (BC) algorithm. We refer the reader to \cite{baldick}, \cite{kevinwood}, for the BC algorithm adopted here.
The abbreviations BC-LPF and BC-NPF denote the solutions obtained by applying optimal attack strategies from $\adlpf$ to LPF and NPF, respectively. Importantly, the experiments illustrate the impact of attacker's resource ($\arcm$) and defender's load control capability $\gammaMin$ on the optimal value of $\adnpf$. The code for this computational study can be obtained by contacting the authors. 

\subsubsection*{Network Description}$\ $ Our prototypical DN is a modified IEEE 37-node network; see \cref{fig:IEEE37NodeNetwork1}. We consider two variants of this network: homogeneous and heterogeneous. \textbf{Homogeneous Network ($\ghm$)} has 14 homogeneous DERs with randomly assigned node locations, loads with equal nominal demand, and lines with identical $\rxratio$ ratio. \ifparameterTable \Cref{tab:parameterTable} (in appendix) lists the parameter values of $\ghm$. \else Each line has impedance of $z_j = (0.33 + 0.38\j)\;\Omega$. The nominal demand at each node $i$ is $\scdem_i = 15\;kW+\j4.5\;kvar$. The apparent power capability of each DER node $i$ is $\sgmax_i = 11.55\;kVA$. The nominal voltage at node $0$ is $\abs{V_0} = 4\;kV$. The cost of load control is $C = 7\;\$$ per $kW$. \fi 
\textbf{Heterogeneous Network ($\ght$)} has same topology as $\ghm$, but has heterogeneous DERs (chosen at random from 3 different DER apparent power capabilities), heterogeneous loads, and lines with different $\rxratio$ ratios. The locations of DER nodes, the total nominal generation capacity, and the total nominal demand in $\ght$ is roughly similar to the corresponding values for $\ghm$.  

\noindent \textbf{DER output vs $\arcm$.} \cref{fig:37pgqg} compares the DER output ($\sg$)  of uncompromised DERs that form part of defender response in $\ghm$ and $\ght$ for different $\arcm$. When $\arcm = 0$ (no attack), there are no voltage violations, and the defender minimizes $\lloss$, which results in $\pg > \qg$. For $\arcm > 0$, the voltage bounds may be violated. To limit $\llovr$, the defender responds by increasing $\qg$; and the output of uncompromised DERs lie in a neighborhood of $\theta = \arccot \rxratio$. For the case of $\ght$ (\cref{fig:37heteropgqg}), the set-points of the uncompromised DERs are more spread out to achieve voltage regulation over different $\rxratio$ ratios (\cref{prop:pvSetpoints}). In \cref{fig:37heteropgqg} the three semi-circles correspond to the uncompromised DERs with different apparent power capabilities. These observations on the defender response validate \cref{prop:pvSetpoints}. 
  
\def\hssss{\hspace{-0.2cm}}
\def\hs{\hspace{-0.65cm}}
\def\ww{4cm}
\def\hh{4cm}
\begin{figure}[h]
\centering 
\subfloat[Homogeneous network.]{
\includegraphics[width=\ww,height=\hh]{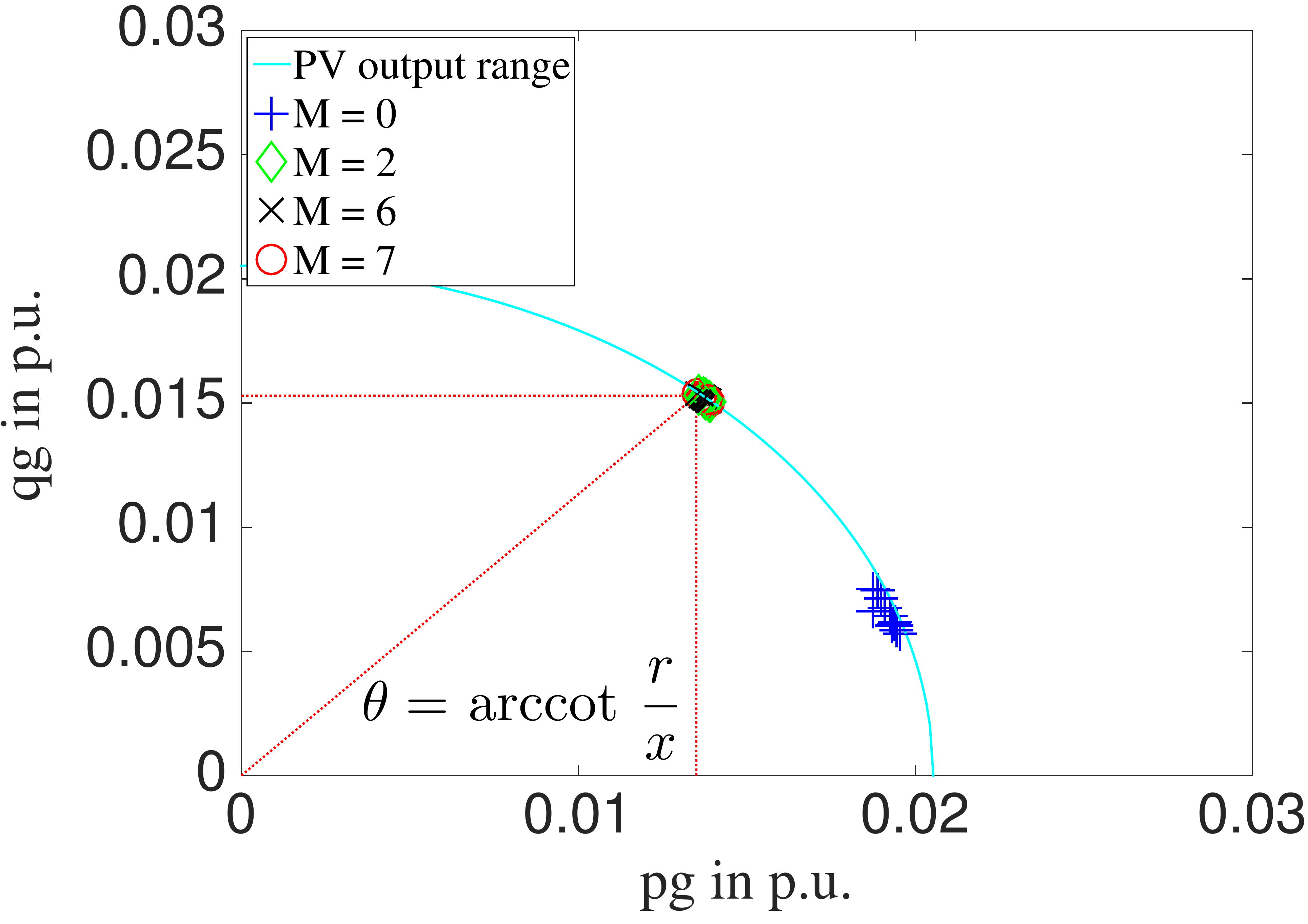}
\label{fig:37homopgqg}}\hfill
\subfloat[Heterogeneous network.]{
\includegraphics[width=\ww,height=\hh]{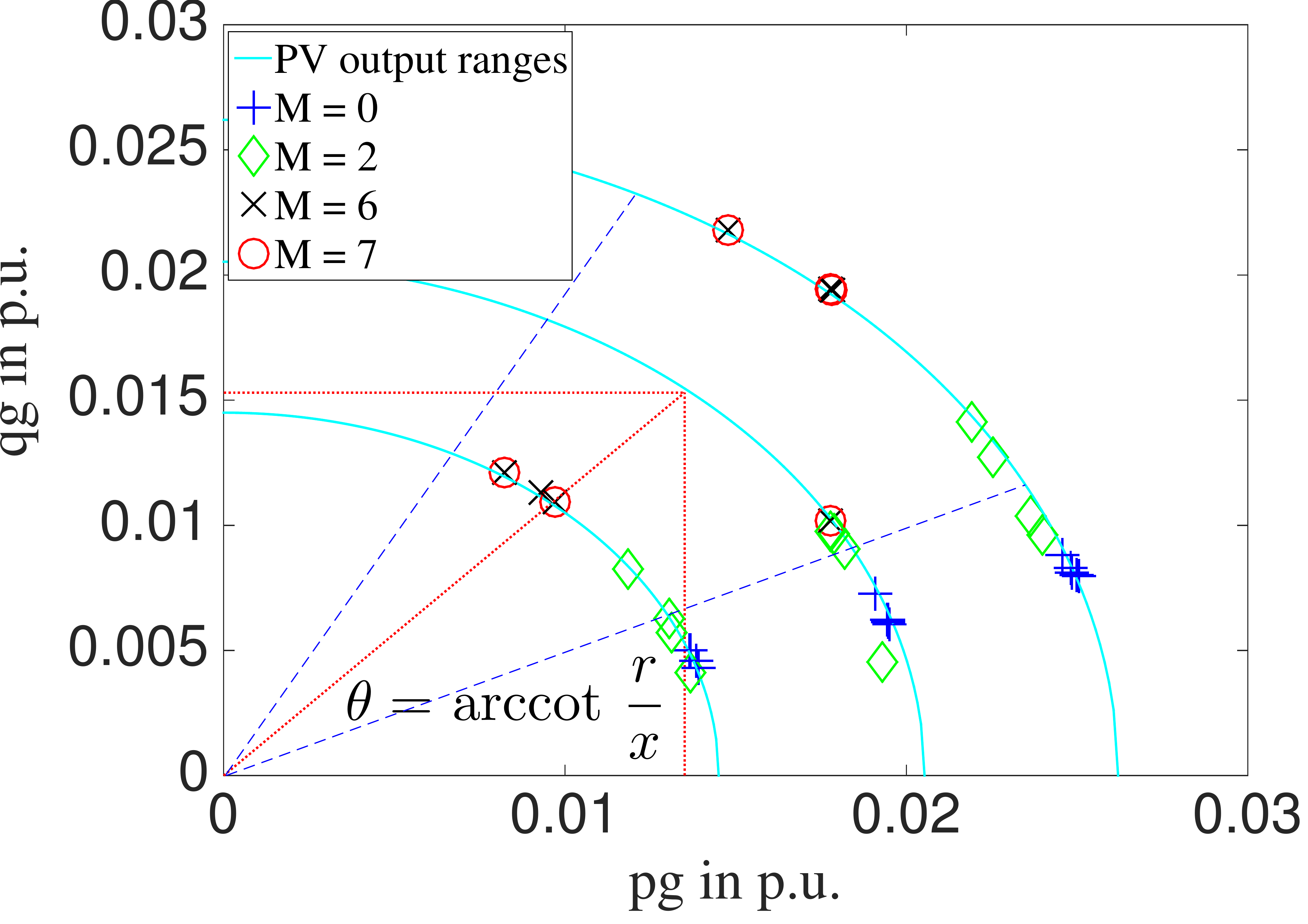}
\label{fig:37heteropgqg}}
\caption{Reactive power vs Real power output of DERs.}
\label{fig:37pgqg}
\end{figure}

\noindent \textbf{GA \textit{vs.} BC-NPF, BC-LPF and BF.} \cref{fig:homogenous} compares results obtained from~\emph{BC-NPF},~\emph{GA}, and~\emph{BF} on $\ghm$. We consider two cases with the maximum controllable load percentage $\gammaMin=50\;\%$ and $\gammaMin=70\;\%$. For each case, we vary $\arcm$ from $0$ to $\abs{\Nv} = 14$; and also vary $\wcratio$ ratios to capture the effect of different weights on the terms  $\llovr$ and $\lvoll$. 
 
\tcb{In our study, we chose $C_i= 7\;cents/kWh$, converted appropriately to the per unit system. \footnote{\tcb{From a practical viewpoint, the weights $C_i$ can be obtained from the operator's rate compensation scheme for load control. For example, North Star Electric \cite{northStar} provides a compensation of $9.1 \; cents$ to their customers for $1\; kWh$ of load curtailment. One can argue that the net cost of shedding unit load should be adjusted to reflect the fact that the defender supplies additional power during the attack to meet the consumers' demand. }} The ratio $\wcratio = 2$ roughly corresponds to the maximum $\sfrac{W}{C}$ ratio for which the defender does not exercise load control, because the cost of doing load control is too high, i.e., at optimum defender response $\gammastarnpf = \unity$. In contrast, $\wcratio = 18$  roughly corresponds to the minimum $\sfrac{W}{C}$ ratio for which the defender exercises maximum load control (i.e. $\gammastarnpf = \gammaMin$). We also consider an intermediate ratio, $\wcratio = 10$. }


\def\ww{4.3cm}
\def\hh{4.3cm}
\begin{figure}
\subfloat[$\llovr$ vs $\arcm,\quad\gammaMin = 0.5$.]{
\includegraphics[width=\ww,height=\hh]{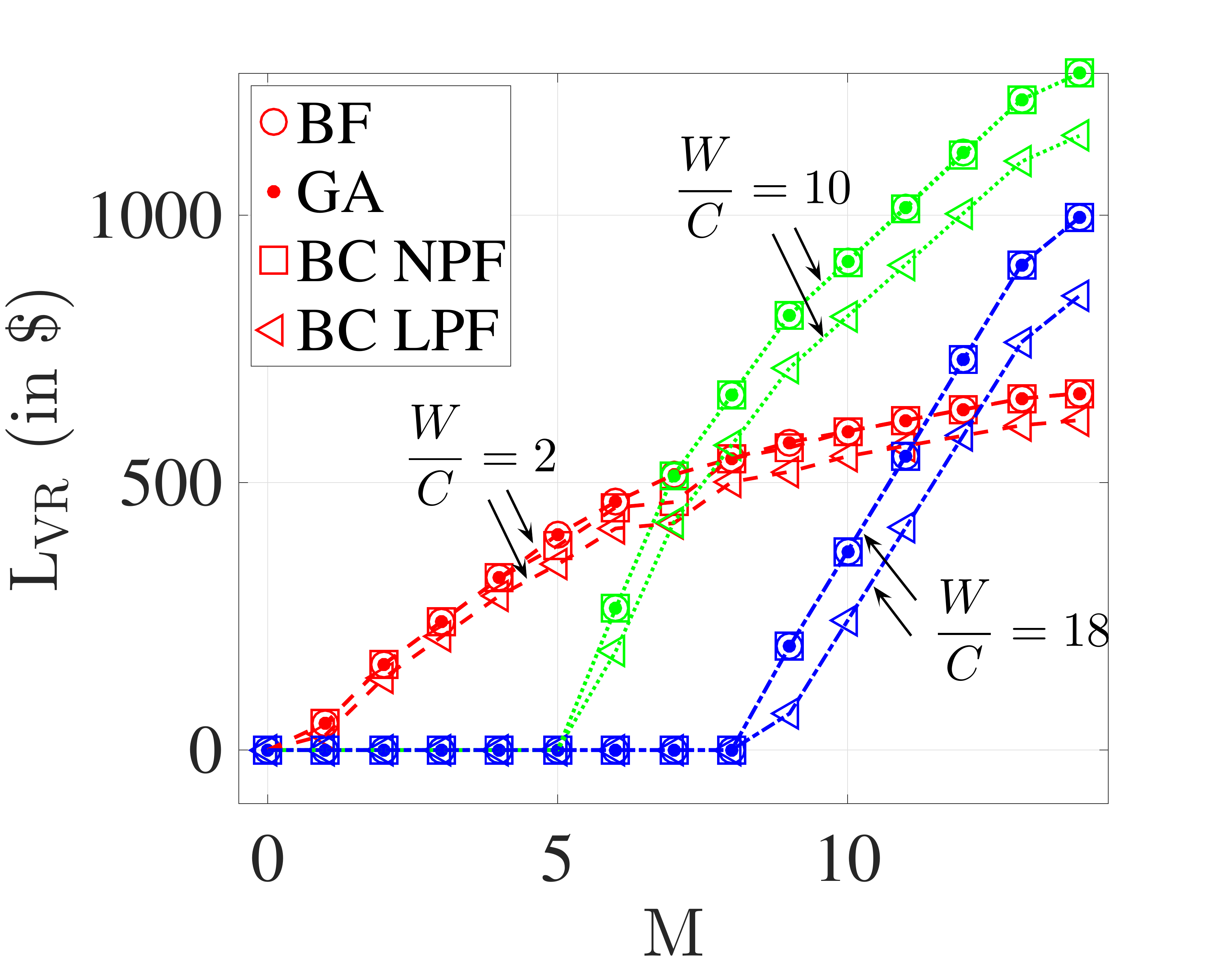}
\label{fig:lovrVsDelta_37_50}
}
\subfloat[$\llovr$ vs $\arcm,\quad\gammaMin = 0.7$.]{
\includegraphics[width=\ww,height=\hh]{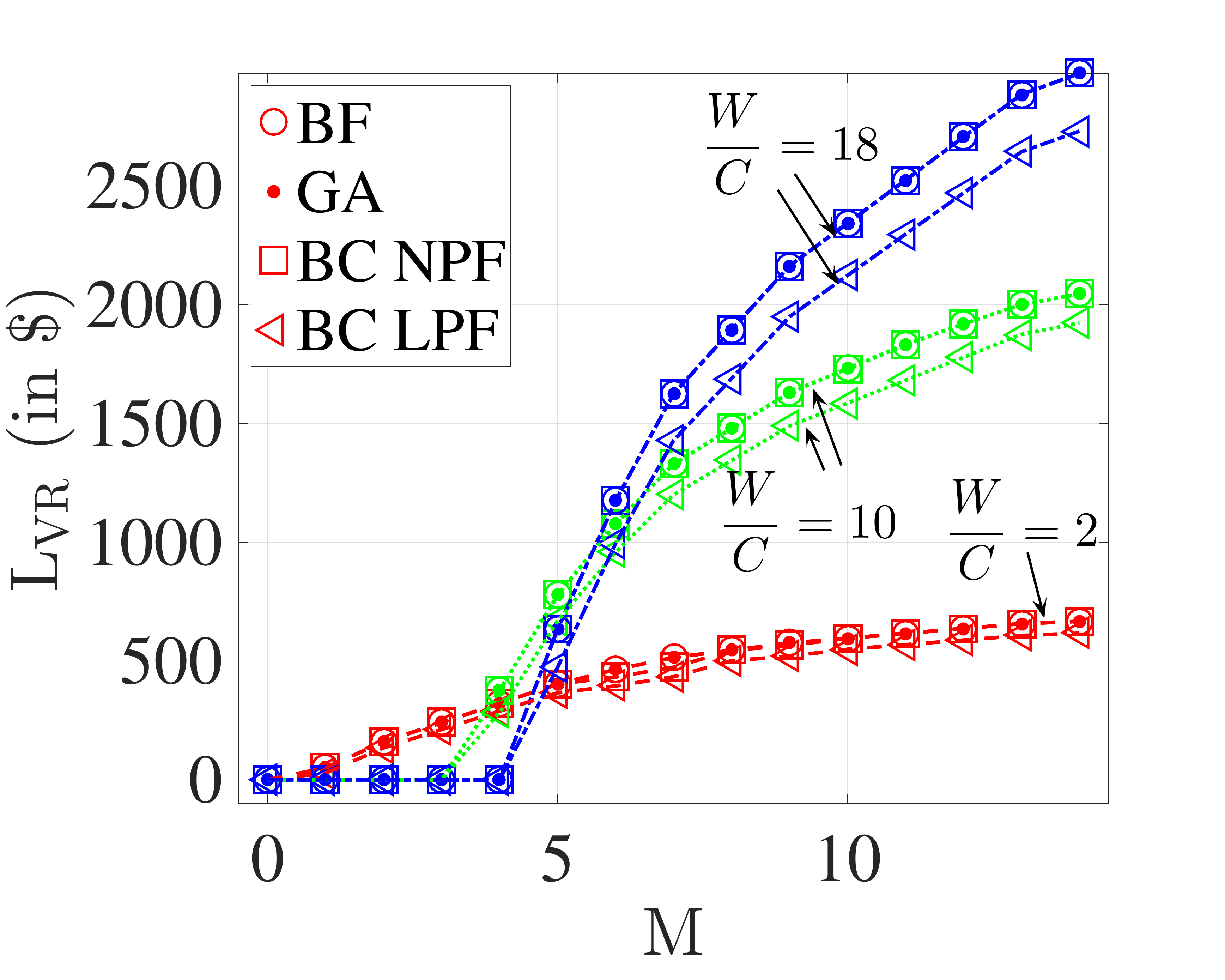}
\label{fig:lovrVsDelta_37_70}
}\\
\subfloat[$\lvoll$ vs $\arcm,\quad\gammaMin = 0.5$.]{
\includegraphics[width=\ww,height=\hh]{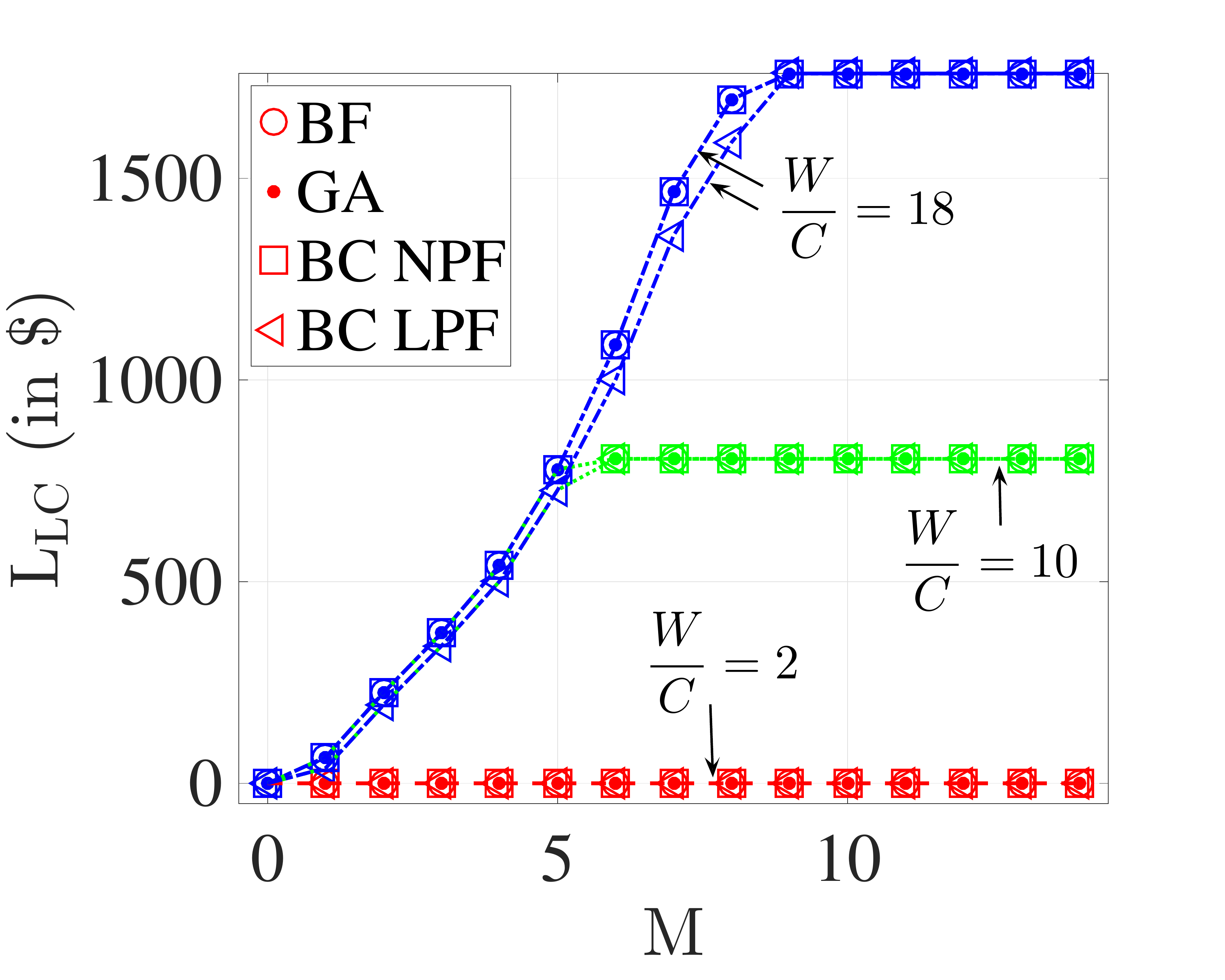}
\label{fig:vollVsDelta_37_50}
}
\subfloat[$\lvoll$ vs $\arcm,\quad\gammaMin = 0.7$.]{
\includegraphics[width=\ww,height=\hh]{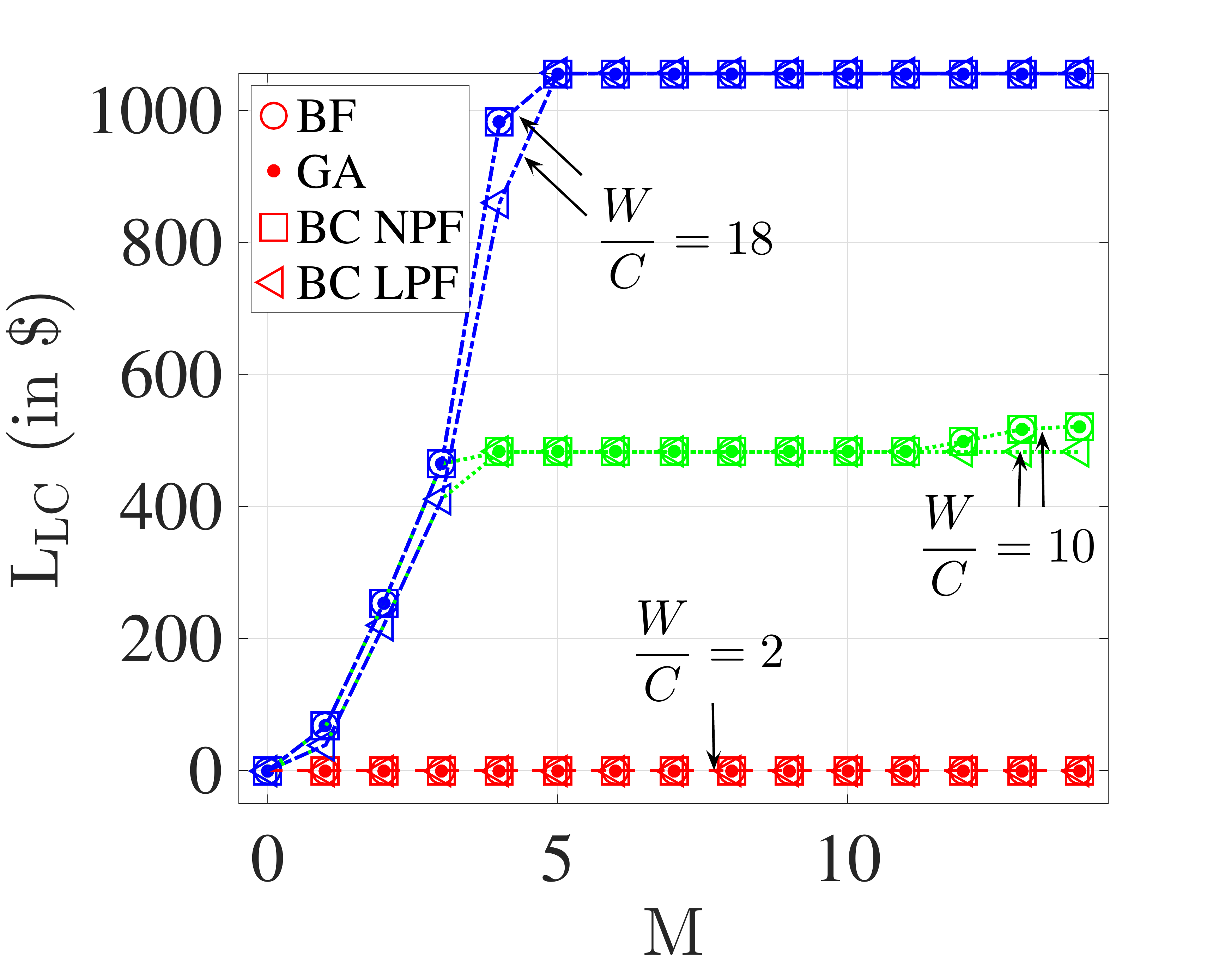}
\label{fig:vollVsDelta_37_70}
}
\caption{$\llovr$ and $\lvoll$ vs $M$ for $\ghm$. The results of $\ght$ are more or less similar to those of $\ghm$. }\label{fig:homogenous}
\end{figure}

 
\noindent $\mathbf{\Loss}$ \textit{versus} $\mathbf{\arcm}$. Both $\llovr$ and $\lvoll$ are zero when there is no attack. As $\arcm$ increases, one or both $\llovr$ and $\lvoll$ start increasing. This indicates that as more DERs are compromised, the defender incurs $\llovr$, and in addition, he imposes load control to better regulate the DN. Indeed, after the false set-points (\Cref{thm:attackerSetpoints}) are used to compromise  DERs, the net load in the DN increases. Without load control, the voltages at some nodes drop below the lower bounds, increasing $\llovr$. Hence, the defender exercises  load control, and changes the set-points of uncompromised DERs to limit the total loss. 

Perhaps a more interesting observation is that as $\arcm$ increases,  $\lvoll$ first increases rapidly but then flattens out (\cref{fig:vollVsDelta_37_50,fig:vollVsDelta_37_70}). This can be explained as follows: depending on the $\sfrac{W}{C}$ ratio, there is a subset of downstream loads that are beneficial in terms of the value that the defender can obtain by controlling them. That is, if the loads belonging to this subset are  controlled, the decrease in $\llovr$ outweighs the increase in $\lvoll$, hence, the defender imposes load control on these downstream loads to reduce the the total loss. In contrast, controlling the loads outside this subset, increases $\lvoll$ more than the decrease in $\llovr$. Hence, the defender satisfies the demand at these loads fully. The $\lvoll$ increases  until load control capability in the subset of  beneficial downstream loads to the defender is fully exhausted. The size of this subset depends on the $\sfrac{W}{C}$ ratio. The higher the ratio, the larger the size of the subset of the loads beneficial to the defender. Hence, the value of $\arcm$, at which the $\lvoll$ cost curve flattens out, increases as the $\sfrac{W}{C}$ ratio increases.  

The cost curve for $\llovr$ also shows interesting behavior as the number of compromised DER nodes increases (\cref{fig:lovrVsDelta_37_50,fig:lovrVsDelta_37_70}). The marginal increase in $\llovr$ for every additional DER compromised reduces as $\arcm$ increases. This observation can be explained by the fact that the attacker prefers to compromise downstream nodes over upstream ones (\cref{prop:downstream}).  Initially, the attacker is able to rapidly increase $\Loss$ by compromising more beneficial downstream nodes. However, as the downstream nodes are eventually exhausted, the attacker has to target the relatively less beneficial upstream nodes. Hence, the reduction in marginal increase of $\llovr$. 

In $\llovr$ plots, for small $\arcm$, $\wcratio = 2$ curves are lower than the $\wcratio=10$ curves which in turn are lower than the $\wcratio=18$ curves. But, for larger $\arcm$, this order reverses. The $\arcm$ where these lines cross each other decreases, as the $\gammaMin$ increases (see \cref{fig:lovrVsDelta_37_50,fig:lovrVsDelta_37_70}). The reason is for some intermediate value of $\arcm$, the defender exhausts the load control completely, and then the $\Loss$ increases at rates in the same order of increasing $\wcratio$ values.  

Our computational study also validates that the GA is more efficient than BC method because GA calculates the exact impact the DER compromises will have on a pivot node. In contrast, BC overestimates the impact of DER compromises that are not the ancestors to the pivot nodes. Therefore, the feasible region probed by BC at every iteration is larger than the feasible region probed in the corresponding iteration of GA. Hence, although GA converges to a solution in 2-3 iterations, BC in most cases does not converge to the optimal solution even in 200 iterations.

\section{Concluding Remarks}
\label{sec:conclusion}

We focused on the security assessment of radial DNs for an adversary model in which multiple DERs (in this case, DER nodes) are compromised. The adversary can be a threat agent, who can compromise the operation of DERs, or a malicious insider in the control center. We considered a composite loss function that primarily accounts for the attacker's impact on voltage regulation and induced load control. The security assessment problem is formulated as a three-stage Defender-Attacker-Defender ($\dadnpf$) sequential game. Our main technical contributions include: (i) Approximating the $\dadnpf$ game that has nonlinear power flow model and mixed-integer decision variables with tractable formulations based on linear power flow; and (ii) characterization of structural properties of security investments in Stage 1 and the optimal attack in Stage 2 (i.e., the choice of DER node locations and the choice of false set-points).
 
Future work includes: (a) Extending \Cref{thm:lpfNpfUpfRelationship,thm:attackerSetpoints} to cases where reverse power flows are permissible (e.g., when the DN is not under heavy loading conditions and the attacker can cause DER generation to exceed the demand); (b) Designing greedy algorithm to solve [AD] and proving optimality guarantees of \Cref{thm:greedyApproach,thm:optimalSecureDesign} for DNs with heterogeneous $\rxratio$ ratio, and heterogeneous DERs or loads. 

\tcb{Finally, note that we do not consider cascading failures in our paper. 
However, our analysis can be extended to a form of cascading failures within DNs reported by Kundu and Hiskens~\cite{kunduOvervoltage}. They study synchronous tripping of the loads (specifically, plug-in electric vehicles chargers) leading to over-voltages in the DN. Our result on optimal DER attack can be used to create voltage violations at some nodes. If these violations are too high, certain loads may start to trip. After sufficiently large number of loads trip, the attacker can further manipulate the DER setpoints to their maximum power generation capacity. In the absence of new loads, this may lead to overvoltages, as described in~\cite{kunduOvervoltage}.}

\bibliographystyle{IEEEtran}
\bibliography{IEEEfull.bib}

\appendix


\ifArxivVersion
\begin{footnotesize}
\begin{table}
\centering
\begin{footnotesize}
\begin{tabular}{|c|c|}
\hline
Parameters & Values \\
\hline\hline
$r + \j x$ & $(0.33 + 0.38\j)\;\Omega$\\
\hline
$\pcdem_i $ & $15\;kW$\\
\hline
$\qcdem_i $ & $4.5\;kvar$\\
\hline
$\sgmax_i $ & $11.55\;kVA$\\
\hline
$\abs{V_0}$ & $4\;kV$\\
\hline
$C$ & $7\;\$$ per $kW$\\
\hline
\end{tabular}
\end{footnotesize}
\caption{Parameters of the Homogeneous Network}\label{tab:parameterTable}\normalsize
\end{table}
\end{footnotesize}
\fi 
\def \msmall {\normalsize}
\newenvironment{mysmall}[1]{\msmall #1}{}

\begin{mysmall}
For a pivot node~$i\in\N$, \cref{algo:helperProcedures} computes a sequence of sets of nodes in decreasing order of  $\Delta_j(\nulpf_i)$ values. This sequence is used to compute the optimal attacks that maximize voltage bounds violation at node~$i$. 

\begin{algorithm}[H]
\begin{algorithmic}[1] 
\begin{small}
\Procedure{OptimalAttackHelper}{$i$, $\sgsetdlpf$} 
 	\State For each $j\in\N$ compute $\Delta_j(\nulpf_i)$ using \Cref{lem:pvCompromiseEffects}
 	\State Create a sequence of sets  $\{\Ni_j \}_{j=1}^{\NN}$ such that \begin{enumerate}
\item[i)] $\N = \bigcup_{j=1}^\NN \Ni_j$, $\forall\;1\le j,k,\le \NN, \; \Ni_j \cap \Ni_k = \emptyset$
\item[ii)] if $1 \le l \le \NN,\;j,k\in\Ni_l$, then $\Delta_j(\nulpf_i) =  \Delta_k(\nulpf_i)$, and
\item[iii)] if $1 \le l < m \le \NN,\;j\in\Ni_l,k\in\Ni_m$, then $\Delta_j(\nulpf_i) > \Delta_k(\nulpf_i)$. 
 	\end{enumerate}
 	
 	\State Let, for $j\in[1,\dots,\NN], \mi_j \gets \abs{\Ni_j}$, $\Mi_j \coloneqq \sum_{k=1}^{j-1} \mi_k$. 
 	\State Let $\gi \gets  \argmin_{j\in[1,\dots,\NN], \Mi_j \ge M} \quad j$. 
 	\State $\J \gets \bigcup_{j=1}^{\gi-1}$, $\N_{\gi}$, $\m' = M - \Mi_{\gi-1}$ 
 	\State \Return  $\J$, $\N_{\gi}$, $\m'$
\EndProcedure

\end{small} 
\end{algorithmic}
\caption{Helper procedure}\label{algo:helperProcedures}
\end{algorithm} 
\end{mysmall}

\begin{table}[h]
\centering
\begin{small}
\def \Ilpf {\widehat{I}}
\def \Iupf {\widecheck{I}}
\def \Vlpf {\widehat{V}}
\def \Vupf {\widecheck{V}}
\def \xx {0.5cm}
\def \xxx {5.0cm}
\def \xxxx {7.0cm}
\def \xxxxx {0.5cm}
\def \xxxxxx {8.1cm}
\def \xxxxxxx {6.5cm}
\def \myRowHeight {[5ex]}
\setlength\minrowclearance{0.7pt}
\tcb{\begin{tabular}{|p{\xxxxx}|p{\xxx}|p{\xx}|p{\xxxxx}|}
\hline
\hline
$\j$ & \multicolumn{3}{|p{\xxxx}|}{$\j = \sqrt{ -1}$ complex square root of -1} \\
\hline
\hline
\multicolumn{4}{|c|}{Network parameters} \\
\hline
\hline
\ifArxivVersion 
$\N$ & \multicolumn{3}{|p{\xxxx}|}{set of nodes} \\
$\E$ & \multicolumn{3}{|p{\xxxx}|}{set of edges} \\
$\G$ & \multicolumn{3}{|p{\xxxx}|}{tree topology $\G = (\N,\E)$} \\
$r_j $ & \multicolumn{3}{|p{\xxxx}|}{resistance of line $(i,j) \in \E$}  \\
$x_j $ & \multicolumn{3}{|p{\xxxx}|}{reactance of line  $(i,j) \in \E$ }\\ \fi 
$z_j $ & \multicolumn{3}{|p{\xxxx}|}{impedance $z_j = r_j + \j x_j$ of line  $(i,j) \in \E$ }\\
$H $ & \multicolumn{3}{|p{\xxxx}|}{height of the tree }\\
\ifArxivVersion $\N_h $ & \multicolumn{3}{|p{\xxxx}|}{set of nodes on level $h \in 1,2,\cdots,H$}\\
$h_i $ & \multicolumn{3}{|p{\xxxx}|}{level of node $i$}\\
\ifArxivVersion $\child_i $ & \multicolumn{3}{|p{\xxxx}|}{set of children nodes of node $i$}\\ \fi 
$\Lambda_i$ & \multicolumn{3}{|p{\xxxx}|}{subtree rooted at node $i \in \N $}\\
$\Lambda_i^j $ & \multicolumn{3}{|p{\xxxx}|}{subtree rooted at node $i\in\N$ until level $h_j$ for $j\in\Lambda_i$}\\ \fi 
$\P_i$ & \multicolumn{3}{|p{\xxxx}|}{path from the root node to node $i$} \\
$Z_{ij}$ & \multicolumn{3}{|p{\xxxx}|}{$Z_{ij}\coloneqq \sum_{k\in\P_i\cap\P_j}z_k$ common path impedances between nodes $i$ and $j$} \\
\hline
\hline
\multicolumn{4}{|c|}{Power flow notations} \\
\hline
\hline
$\npf$ & Nodal quantities of node $i \in \N $  & $\lpf$ & $\UPF$ \\
\hline
$\scdem_i $ & complex power demand at node $i$ & $ - $ & $ - $ \\
$\sc_i $ & complex power consumed at node $i$ & $\sclpf_i $ & $\scupf_i $ \\
$\sg_i $ & complex power generated at node $i$ & $\sglpf_i $ & $\sgupf_i $ \\
$\sgset_i$ & complex power set-point of DER $i$ & $\sgsetlpf_i $ & $\sgsetupf_i $ \\
$V_i$ & complex voltage at node $i$ & $\Vlpf_i$ & $\Vupf_i$  \\
$\nu_i$ & square of voltage magnitude at node $i$ & $\nulpf_i$ & $\nuupf_i$  \\
$\nuMin_i,\nuMax_i$ & \emph{soft} lower and upper bounds on square of  voltage magnitude at node $i$ & &   \\
\hline
\hline
$\npf$ & Edge quantities of edge $(i, j) \in \E $  & $\lpf$ & $\UPF$ \\
\hline
$S_j $ & complex power flowing on line $(i,j)$ & $\Slpf_i $ & $\Supf_i $ \\
\ifArxivVersion $I_j$ & complex current flowing on line $(i,j)$ & $\Ilpf_i $ & $\Iupf_i $ \\ \fi 
$\ell_j $ & square of magnitude of current $I_j$ & $\elllpf_i $ & $\ellupf_i $ \\
$\xnpf $ & $\xnpf = (\nunpf,\ellnpf,\scnpf,\sgnpf,\Snpf)$ - state vector & $\xlpf $ & $\xupf $ \\
\hline
\hline
\multicolumn{4}{|c|}{Attacker model} \\
\hline
\hline
$\delta_i$ & $\delta_i = 1$ if DER $i$ is compromised & $- $ & $- $ \\
$\sgsetanpf_i$ & attacker set-point of DER $i$ & $\sgsetalpf_i $ & $\sgsetaupf_i $ \\
$\psi$ & $\psi\coloneqq (\sgsetanpf,\delta)$ attacker strategy & $\psilpf$ & $\psiupf$ \\
\hline
\hline
\multicolumn{4}{|c|}{Defender model} \\
\hline
\hline
$\gammaMin_i$ & max. allowed fraction of load control & $- $ & $-$ \\
$\gammanpf_i$ & fraction of load control at load $i$ & $\gammalpf_i $ & $\gammaupf_i $ \\
$\sgsetdnpf_i$ & defender set-point of DER $i$ & $\sgsetdlpf_i $ & $\sgsetdupf_i $ \\
$\phi$ & $\phi\coloneqq (\sgsetdnpf,\gamma)$ defender strategy & $\philpf$ & $\phiupf$ \\
\hline
\end{tabular}}
\caption{Table of Notations.} \label{tab:notationsTable}
\end{small}
\end{table}

\begin{IEEEbiography}[{\includegraphics[width=1in,height=1.25in,clip,keepaspectratio]{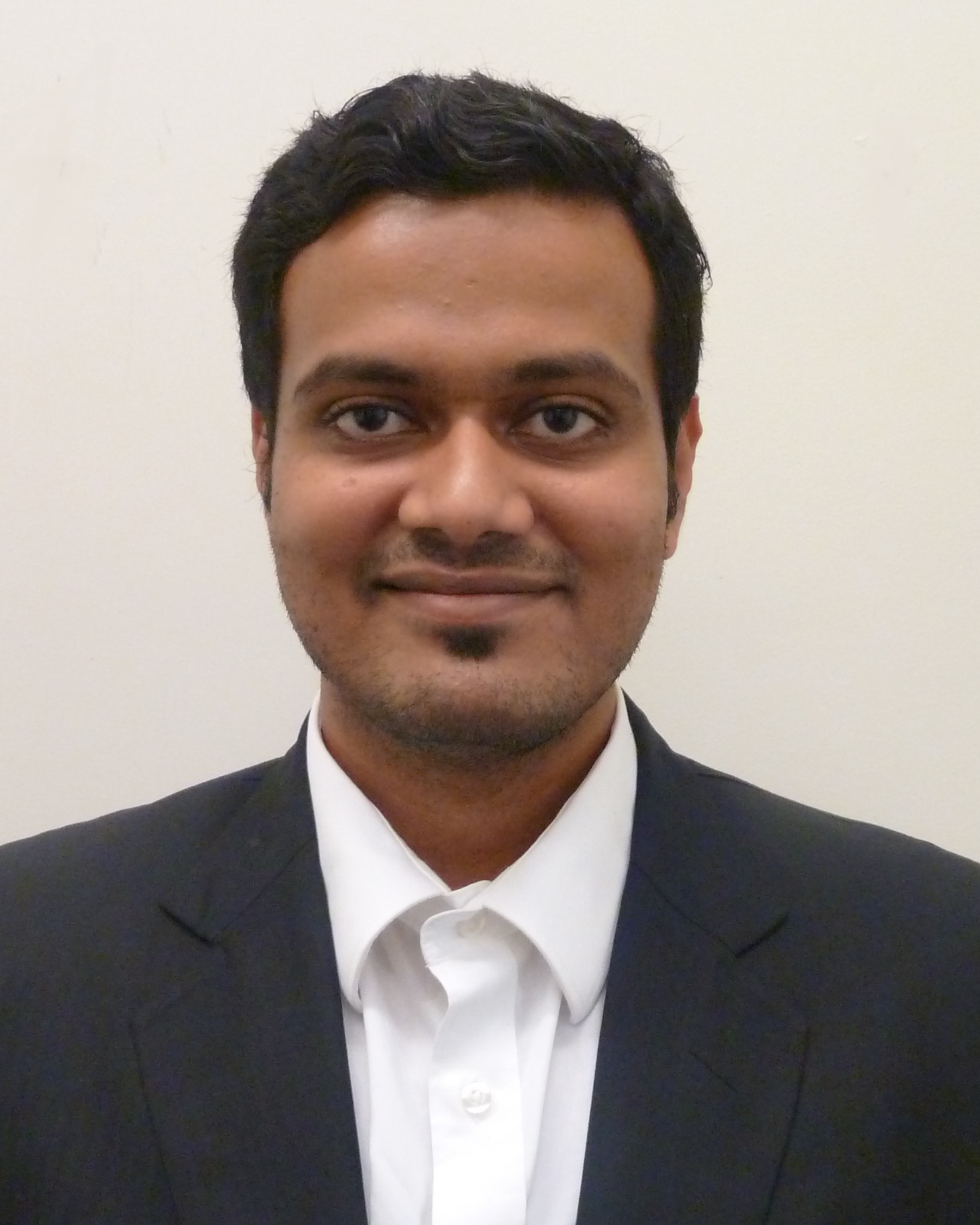}}]{Devendra Shelar}
is pursuing his Ph.D. at the Center for Computational Engineering (CCE), Massachusetts Institute of Technology (MIT). He is interested in developing tools using large-scale optimization and game theory to improve the resiliency of cyber-physical systems to failures. His current focus is on the secure and efficient operation of power systems with high penetration of distributed energy resources.  He  received his Dual B.Tech. \& M.Tech. in Computer Science and Engineering from the Indian Institute of Technology Bombay in 2012, M.S. in Transportation Engineering from MIT in 2016.
\end{IEEEbiography}

\begin{IEEEbiography}[{\includegraphics[width=1in,height=1.25in,clip,keepaspectratio]{./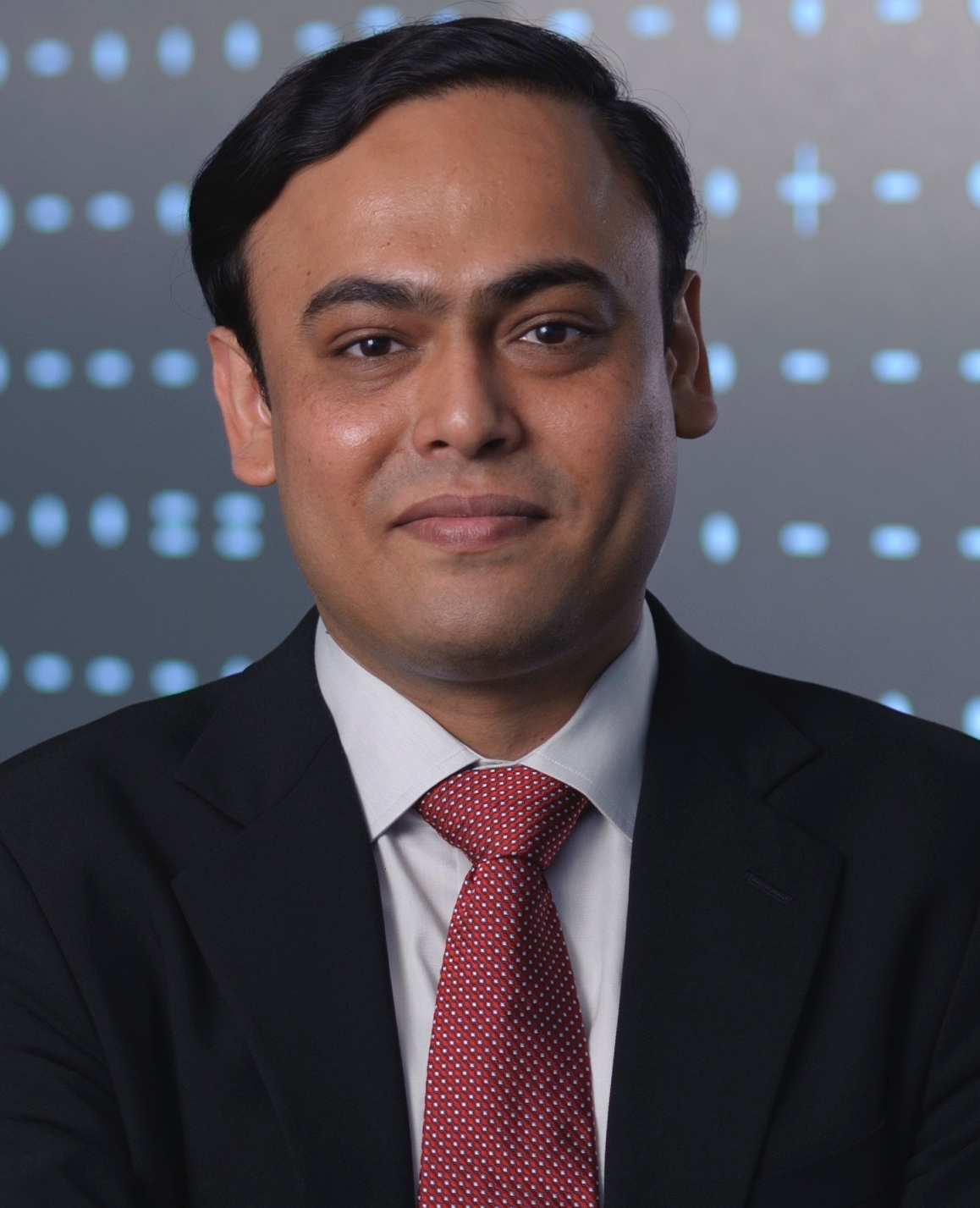}}]{Saurabh Amin}
is Robert N. Noyce Career Development Assistant Professor in the Department of Civil and Environmental Engineering, Massachusetts Institute of Technology (MIT). His research focuses on the design and implementation of high confidence network control algorithms for infrastructure systems. He works on robust diagnostics and control problems that involve using networked systems to facilitate the monitoring and control of large-scale critical infrastructures, including transportation, water, and energy distribution systems. He also studies the effect of security attacks and random faults on the survivability of networked systems, and designs incentive-compatible control mechanisms to reduce network risks. Dr. Amin received his Ph.D. in Systems Engineering from the University of California, Berkeley in 2011. He is a recipient of NSF CAREER award, and Google Faculty Research award. 
\end{IEEEbiography}
  
\ifArxivVersion
\clearpage 
\section*{Supplementary Material}


\begin{proof}[\textbf{Proof of \cref{lem:lpfAndUpf}}]
Recursively apply the power flow models \eqref{eq:NPF}, \eqref{eq:LPF}, and  \eqref{eq:UPF}, from the root node to leaf nodes.
\end{proof}

\begin{proof}[\textbf{Proof of \cref{lem:lpfNpfRelationship}}]

We apply induction from  leaf nodes to the root node. 
\textbf{Base case:} For any leaf node $k \in \Nleaf$,
\begin{align*}
\begin{aligned}\myEquationStyle
z_k\ellnpf_k &\stackrel{\subassmref{assm:smallLineLosses}}{\le} \slr \Snpf_k &&\stackrel{\eqref{eq:recursiveNPFConservation}}{=} \slr(\sn_k + z_k\ellnpf_k) \\
\therefore z_k\ellnpf_k &\le \myFrac{\slr\sn_k}{1-\slr} &&\stackrel{\eqref{eq:recursiveLPFConservation}}{=} \myFrac{\slr\Slpf_k}{1-\slr}.
\end{aligned}
\end{align*}
Now, for any $j\in\N \backslash \Nleaf$, 
\begin{alignat}{8}\myEquationStyle
z_j\ellnpf_j \stackrel{\subassmref{assm:smallLineLosses}}{\le} \slr\Snpf_j &\stackrel{\eqref{eq:NPFconservation}}{=} \slr\big[\ssum_{k:(j,k)\in\E} \Snpf_k + \sn_j+z_j\ellnpf_j \big] \nonumber \\
\therefore z_j\ellnpf_j &\le \myFrac{\slr}{1-\slr} \big[\ssum_{k:(j,k)\in\E} \Snpf_k + \sn_j\big]. \nonumber
\end{alignat}
Adding $\sum \Snpf_k + \sn_j$ on both the sides:
\begin{alignat}{8}\myEquationStyle \underbrace{\ssum_{k:(j,k)\in\E} \Snpf_k + \sn_j+z_j\ellnpf_j}_{\Snpf_j} &\le \myFrac{1}{1-\slr} \big[\ssum_{k:(j,k)\in\E} \Snpf_k + \sn_j\big]. \nonumber
\end{alignat}
\textbf{Inductive step:} By inductive hypothesis (IH) on $\child_j$, 
\begin{alignat}{8}\myEquationStyle
 \Snpf_j &\stackrel{\text{(IH)}}{\le} \myFrac{1}{(1-\slr)^{\nl-\abs{P_k} + 2}}\big[\ssum_{k:(j,k)\in\E} \Slpf_k + \sn_j \big] \nonumber \\
 \vspace{-1cm}&\stackrel{}{=}  \myFrac{\Slpf_j}{(1-\slr)^{\nl-\abs{P_j} + 1}} \quad (\because \abs{\P_j} = \abs{\P_k} - 1). \nonumber
\end{alignat}
\end{proof}

\begin{proof}[\textbf{Proof of \cref{prop:allPowerFlows}}] The inequalities $\Slpf \le \Snpf$ and $\nulpf \ge \nunpf$ are proved in \cite{exactConvexRelaxation}.

The rest of the proof of \Cref{prop:allPowerFlows} utilizes two lemmas. From \Cref{lem:lpfNpfRelationship}, for any $(i,j)\in\E$, 
\begin{equation}\label{eq:npfAndUpfPowerFlow}\myEquationStyle
S_j \le \frac{\Slpf_j}{(1-\slr)^{\Pmax - \abs{\P_j} + 1}} \le \frac{\Slpf_j}{(1-\slr)^{\Pmax}} = (1+\slrrr) \Slpf_j \stackrel{\eqref{eq:lpfAndUpf}}{=} \Supf_j.
\end{equation}
For nodal voltages, 
\begin{alignat}{8} \myEquationStyle
\nu_j & \myEquationStyle \stackrel{\eqref{eq:voltageSquare2}}{=} \nu_i - 2\Re(\bar{z}_jS_j) + \abs{z}_j^2 \ell_j \nonumber \\
\myEquationStyle & \myEquationStyle \stackrel{}{\ge} \nu_i - 2\Re(\bar{z}_jS_j) \nonumber \\
\myEquationStyle \label{eq:voltageNpfUpf}& \myEquationStyle \stackrel{\eqref{eq:npfAndUpfPowerFlow}}{\ge} \nu_i - 2\Re(\bar{z}_j\Supf_j).
\end{alignat}
Applying \eqref{eq:voltageNpfUpf} recursively from the node~$j$ till root node:
\begin{align*}
\begin{aligned}\myEquationStyle
\nu_j &\ge \nu_0 - 2\ssum_{k\in\P_j}\Re(\bar{z}_k\Supf_k) \stackrel{\eqref{eq:recursiveVoltageSquareUpfToRoot}}{=} \nuupf_j.
\end{aligned}
\end{align*}
Thus, $\Slpf_j \le \Snpf_j \le \Supf_j \text{ and } \nulpf_j \ge \nunpf_j \ge \nuupf_j$. Furthermore, 
\begin{align*}
\begin{aligned}\myEquationStyle
\Slpf_j \le \Snpf_j \le \Supf_j \stackrel{\subassmref{assm:noReversePowerFlows}}{\implies} \myEquationStyle \abs{\Slpf_j}^2 \le \abs{\Snpf_j}^2 \le \abs{\Supf_j}^2 \\
\implies \myFrac{\abs{\Slpf_j}^2}{\nulpf_j} \le \myFrac{\abs{\Snpf_j}^2}{\nunpf_j} \le \myFrac{\abs{\Supf}^2}{\nuupf_j} \implies \elllpf \le \ellnpf \le \ellupf. 
\end{aligned}
\end{align*}
Finally, \eqref{eq:optimalValueRelationship} immediately follows from \eqref{eq:lossFunction}, \eqref{eq:lossFunctionDefinitions}, and \eqref{eq:lpfNpfUpfRelationship}. 
\end{proof}

\begin{proof}[\textbf{Proof of \Cref{lem:convexRelaxationOptimalSolution}}] 

Let $\adspcpf$ denote the following problem:
\begin{align}
\begin{aligned}
&\adspcpf \quad \phistarcpf(\psi) &&\in  \argmin_{\phi\in\Phi} \Loss(\xnpf(\psi,\phi))\quad \\
& && \text{s.t. } \xlpf(\ad,\psi,\phi) \in \Xcpf, \eqref{eq:directLoadControl}, \eqref{eq:pvSetpoints}. 
\end{aligned}
\end{align} 

\subassmref{assm:hardLowerBoundOfVoltage} implies that a feasible solution exists for $\adspnpf$. Since, $\Xnpf \subset \Xcpf$, a feasible solution $\xcpf\in\Xcpf$ also exists for $\adspcpf$. 

Let $(\phicpf,\ellcpf)$ denote the decision variables for $\adspcpf$.  Note that, for a fixed $\psi$, $\xcpf$ is affine in the variables $(\phicpf,\ellcpf)$, and can be very efficiently computed using \eqref{eq:NPFconservation} and \eqref{eq:voltageSquare2}. 

Now, $\Loss$ is convex in $\phicpf$ (because the $\llovr$ is a maximum over affine functions, $\lvoll$ is affine in $\phicpf$, and $\lloss$ is affine in $\ellcpf$. Also, $\Phi$ is a convex compact set. Further, for a fixed $\phi$, $\Loss$ is strictly increasing in $\ellcpf$ (because, $\llovr$ is non-decreasing in $\ellcpf$ as $\nucpf$ is affine decreasing in $\ellcpf$; $\lvoll$ does not change with $\ellcpf$; $\lloss$ is strictly increasing in $\ell$). From Theorem~1 \cite{farivar}, $(\phistarcpf,\ellstarcpf)$ can be computed using a SOCP. To argue that $\ellstarcpf$ satisfy \eqref{eq:currentMagnitudeNpf}, assume for contradiction that $\exists\;(i,j)\in\E, \text{s.t. } \ellstarcpf_j > \sfrac{\abs{\Sstarcpf_j}^2}{\nustarcpf_i}$. Then, construct $(\phistarnpf,\ellcpf')$ such that $\forall\;j\in\N\;:j\ne i,\; \ellcpf_j' = \ellstarcpf_j$, and $\ellcpf_i' = \sfrac{\abs{\Sstarcpf_j}^2}{\nustarcpf_i} $. Since  $\forall\;(j,k)\in\E,\sfrac{\abs{\Scpf_k}^2}{\nucpf_j}$ is strictly decreasing in $\ellcpf_i$, $\forall\;(j,k)\in\E:\;\ellcpf_k' \ge \sfrac{\abs{\Sstarcpf_k}^2}{\nustarcpf_j} > \sfrac{\abs{\Scpf_k'}^2}{\nucpf_j'}$. Hence, one can further minimize the loss function by choosing a new feasible solution $(\phistarnpf,\ellcpf')$, thus violating the optimality of $(\phistarcpf,\ellstarcpf)$.
\end{proof}

\begin{proof}[\textbf{Proof of \cref{prop:pvSetpoints}}]
\def \istar {j} 

Let $(\di, \myti)$ denote $\sgsetdlpf_i$ in the polar coordinates, i.e., $\di = \abs{\sgsetdlpf_i}, \myti = \angle \sgsetdlpf_i$. 

For $\aa_i = 0$, $\sgsetlpf_i = \sgsetdlpf_i$. Then from  \eqref{eq:recursiveVoltageSquareLpf}, $\forall\;j\in\N$, 
\begin{equation} \label{eq:partialNuTheta}
 \nulpf_j = \nulpf_j' + 2\di (\rr_{ij} \cos\myti + \xx_{ij}\sin\myti), 
\end{equation}
where $\nulpf_j' = \nu_0 - 2\ssum_{k\in\N,k\ne j} \Re(\conj{\zz}_{jk}\sn_k) - 2\Re(\conj{\zz}_{ij}\sc_j)$. Note that $\nulpf_j'$ does not depend on $(\di, \myti)$. 

It is clear from \eqref{eq:partialNuTheta} that $\nulpf_j$ is greater if $\myti \in [0,\sfrac{\pi}{2}]$ than if $\myti \in [-\sfrac{\pi}{2},0]$. Furthermore, the impedances are positive. Hence, $\forall\; j, \partial_{\di}\nulpf_j = 2(\rr_{ij}\cos\myti + \xx_{ij}\sin\myti) > 0$. Hence, $\partial_{\di}\llovr > 0$. But, from \eqref{eq:dgConstraint1}, $\di \le \sgmax_i$. Hence, $\dstari = \sgmax_i$. Further, $\partial_{\myti} \nulpf_j = 2\di(-\rr_{ij}\sin\myti + \xx_{ij}\cos\myti)$. 
\[
\partial_{\myti} \nulpf_j \quad 
\begin{cases}
 > 0 & \text{ if } \quad \myti \in [0, \arccot (\sfrac{\rr_{ij}}{\xx_{ij}}) ) \\
 = 0 & \text{ if } \quad \myti = \arccot (\sfrac{\rr_{ij}}{\xx_{ij}}) ) \\
 < 0 & \text{ if }  \quad\myti \in (\arccot (\sfrac{\rr_{ij}}{\xx_{ij}}) ), \sfrac{\pi}{2}] 
\end{cases}
\]
Now, $\arccot \kmax \le \arccot (\sfrac{\xx_{ij}}{\rr_{ij}}) \le \arccot \kmin$. Hence, 
\begin{equation}\label{eq:partialNuTheta2} 
\forall\quad j\in\N,\quad \partial_{\myti} \nulpf_j  \quad 
\begin{cases}
 < 0 & \text{ if } \myti > \quad \arccot \kmin \\
 > 0 & \text{ if } \myti < \quad \arccot \kmax \\
\end{cases}
\end{equation} 

\noindent Suppose, for contradiction, $\myti^\star \not\in [\arccot \kmax,\arccot \kmin]$. Holding all else equal, for $\myti = \mytti$, let $\nulpf({\mytti})$ and $\llovr({\mytti})$ be the $\nulpf$ and $\llovr$. From \eqref{eq:partialNuTheta2}, for any $\mytti \in [\arccot \kmax, \arccot \kmin]$, $\nulpf({\mytti}) >  \nulpf({\mytti}^\star) $. Since, $\llovr > \lloss \ge 0$, $\llovr({\mytti}) < \llovr({\mytti}^\star)$, violating the optimality of ${\mytti}^\star$. Furthermore, under identical $\rxratio$ ratio, $\kmin = \kmax = K$, which implies $\myti = \arccot K$. 
\end{proof}

\begin{claim}
\cref{thm:attackerSetpoints} also holds for $\admpnpf$. 
\end{claim}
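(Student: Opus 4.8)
The plan is to reduce the claim to a monotonicity property of the NPF solution map and then reuse the corner-selection argument already used for $\admplpf$ in the proof of \cref{thm:attackerSetpoints}. Fix a compromised node $i$ (i.e.\ $\delta_i=1$), fix the defender response $\phi$ and the attacker set-points at all other compromised nodes; the only free variables are $\pgnpf_i=\Re(\sgseta_i)\ge 0$ and $\Im(\sgseta_i)$, subject to $\pgnpf_i^2+\Im(\sgseta_i)^2\le\sgmax_i^2$. Since $\phi$ (hence $\gamma$) is fixed, $\lvoll$ is constant, so maximizing $\Loss=\llovr+\lvoll+\lloss$ is the same as maximizing $\llovr+\lloss$. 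As $\llovr$ is non-increasing in each $\nunpf_j$ and $\lloss=\lonenorm{r\emult\ellnpf}$ is non-decreasing in each $\ellnpf_j$, it suffices to show that lowering $\pgnpf_i$ or $\Im(\sgseta_i)$ — equivalently, raising the net injection $\sn_i=\sc_i-\sg_i$ in its real, resp.\ imaginary, part — weakly lowers every $\nunpf_j$ and weakly raises every $\ellnpf_j$.

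So first I would prove a monotonicity lemma for the NPF model under \assmref{assm:throughout}: if two admissible injection profiles satisfy $\sn^{(1)}\le\sn^{(2)}$ componentwise (in both real and imaginary parts) and both give states in $\Xnpf$, then $\Snpf^{(1)}\le\Snpf^{(2)}$, $\nunpf^{(1)}\ge\nunpf^{(2)}$ and $\ellnpf^{(1)}\le\ellnpf^{(2)}$. I would prove this through the monotone forward--backward iteration that computes the NPF fixed point: initialize $\ellnpf\equiv 0$ (so the first backward sweep returns exactly the LPF flows) and repeat the three explicit sweeps (a) $\Snpf_j=\sum_{k\in\Lambda_j}(\sn_k+z_k\ellnpf_k)$, (b) $\nunpf_j=\nu_0-2\sum_{k\in\P_j}\Re(\conj{z}_k\Snpf_k)+\sum_{k\in\P_j}\abs{z_k}^2\ellnpf_k$, and (c) the current update \eqref{eq:currentMagnitudeNpf}, which are \eqref{eq:recursiveNPFConservation}, the recursive form of \eqref{eq:voltageSquare2}, and \eqref{eq:currentMagnitudeNpf}, respectively. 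Running both profiles through identical sweep counts and inducting on the iteration index, sweep (a) makes $\Snpf^{(2)}\ge\Snpf^{(1)}$ (as $\sn^{(2)}\ge\sn^{(1)}$, $\ellnpf^{(2)}\ge\ellnpf^{(1)}$, $z_k>0$), and sweep (c) makes $\ellnpf^{(2)}\ge\ellnpf^{(1)}$ once $\Snpf^{(2)}\ge\Snpf^{(1)}$ and $\nunpf^{(2)}\le\nunpf^{(1)}$ (numerator up, denominator down).

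The crux — and the step I expect to be the main obstacle — is sweep (b), where the two $\ellnpf$-dependent contributions push the voltage in opposite directions. Writing $\Delta(\cdot)=(\cdot)^{(2)}-(\cdot)^{(1)}\ge 0$, no reverse power flows (\subassmref{assm:noReversePowerFlows}) give $\Delta\Pnpf_k\ge\sum_{m\in\Lambda_k}r_m\Delta\ellnpf_m\ge r_k\Delta\ellnpf_k$ and $\Delta\Qnpf_k\ge x_k\Delta\ellnpf_k$, whence $\Re(\conj{z}_k\Delta\Snpf_k)=r_k\Delta\Pnpf_k+x_k\Delta\Qnpf_k\ge(r_k^2+x_k^2)\Delta\ellnpf_k=\abs{z_k}^2\Delta\ellnpf_k$; substituting into (b) yields $\Delta\nunpf_j\le-\sum_{k\in\P_j}\abs{z_k}^2\Delta\ellnpf_k\le 0$, so the gain term is always dominated and $\nunpf^{(2)}\le\nunpf^{(1)}$. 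Small impedances and small line losses (\subassmref{assm:lowImpedance}, \subassmref{assm:smallLineLosses}) then make the iteration a contraction that stays in the no-reverse-flow regime, so it converges to the unique NPF solution and the ordering passes to the limit, proving the lemma.

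With the lemma in hand I finish exactly as for $\admplpf$: decreasing $\pgnpf_i$ raises $\Re(\sn_i)$ and decreasing $\Im(\sgseta_i)$ raises $\Im(\sn_i)$, each weakly decreasing all $\nunpf_j$ and weakly increasing all $\ellnpf_j$, hence weakly increasing $\llovr+\lloss=\Loss$. Since the feasible set $\{\pgnpf_i\ge 0,\ \pgnpf_i^2+\Im(\sgseta_i)^2\le\sgmax_i^2\}$ attains its componentwise-minimal point at $(\pgnpf_i,\Im(\sgseta_i))=(0,-\sgmax_i)$, I can move any feasible set-point first to $\pgnpf_i=0$ and then to $\Im(\sgseta_i)=-\sgmax_i$ without ever decreasing $\Loss$; as the per-node feasible sets decouple across nodes, applying this coordinatewise to every compromised node gives $\sgsetastarnpf_i=0-\j\sgmax_i$ for all $i$ with $\delta_i=1$, which is the claim.
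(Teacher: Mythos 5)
Your proposal is correct and follows essentially the same route as the paper: both arguments reduce the claim to monotone comparative statics of the NPF fixed point in the net injections, established via the contraction-type bounds that \subassmref{assm:lowImpedance}--\subassmref{assm:smallLineLosses} place on the current-update map $\ell\mapsto\abs{S}^2/\nu$ (the paper phrases this as a unique crossing of $y=\ell$ using entrywise Jacobian bounds in $[0,1)$, you as a monotone forward--backward sweep started from $\ell\equiv 0$). Your one genuine addition is making explicit the voltage step that the paper leaves implicit --- the domination $\Re(\conj{z}_k\Delta\Snpf_k)\ge\abs{z_k}^2\Delta\ellnpf_k$ showing that the $+\abs{z_k}^2\ellnpf_k$ gain term in \eqref{eq:voltageSquare2} cannot overturn the voltage drop --- which is a worthwhile tightening but not a different proof.
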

\begin{proof}
Now, we prove the case for $\admpnpf$ by contradiction. Suppose that there exists $i\in\N \text{ s.t. } \Re(\sgset^{a\star}_i) > 0$. Then we can construct another attacker strategy $\psistarcpf = [\aanew,\sgsetanew]$ that can further maximize $\Loss$, such that $\Re(\sgset^{a\star}_i) = 0$, holding all else equal, i.e., $\aanew = \delta, \forall\;j\in\N,\;\Im(\sgsetanew_j) = \Im(\sgset^{a\star}_j), \forall\;j\in\N:j\ne i,\; \Re(\sgsetanew_j) = \Re(\sgset^{a\star}_j)$.  

Let $(\sgseta,\ell)$ be the decision variables for $\admpnpf$, as for fixed $\phi$, the other decision variables $\Pnpf,\Qnpf,\nunpf$ can then be written as affine functions of $(\sgseta,\ell)$ from \eqref{eq:NPF}. Let $(\sgsetastarnpf,\ellstarnpf)$ (resp. $(\sgsetacpf,\ellcpf)$) be the solution to $\admpnpf$ when $\psi = \psistarnpf$ (resp. $\psi = \psicpf$).  

Let $\f \in \R_+^\NN$ such that, for any $(i,j)\in\E$, $f_{j}(\sgseta,\ell) \coloneqq \frac{\Pnpf_{j}^2+\Qnpf_{j}^2}{\nunpf_i}$. Let $\fstar = \f(\sgsetastarnpf, \ellstarnpf)$, $\fnew = \f(\sgsetacpf,\ellstarnpf)$, and $\fcpf = \f(\sgsetacpf,\ellcpf)$.

Since $(\sgsetastarnpf,\ellstarnpf)$ and $(\sgsetacpf,\ellcpf)$ are solutions to $\admpnpf$, they satisfy \eqref{eq:currentMagnitudeNpf}. Hence,  $\fstar = \ellstarnpf$, and $\fcpf = \ellcpf$. Furthermore, it can be checked that $\fnew > \fstar$. We want to show that $\fcpf > \fnew$. Assume that $\fcpf > \fnew$. Then, $\fcpf > \fstar$. Hence, $\Loss(\xcpf) > \Loss(\xstarnpf)$, (because,  $\llovr(\xcpf) > \llovr(\xstarnpf)$, $\lvoll(\xcpf) =  \lvoll(\xstarnpf)$, $\lloss(\xcpf) > \lloss(\xnpf) $). However, this is a contradiction, as it violates the optimality of $\sgsetastarnpf$.  By similar logic, we can show that $\forall\;i\in\N,\;\Im(\sgsetastarnpf_i) = -\sgmax_i$. 
 
We now prove that $\fcpf > \fnew$, with the help of an illustrative diagram (see \cref{fig:currentShift}).  
\begin{figure}[h!]
\includegraphics[width=7cm]{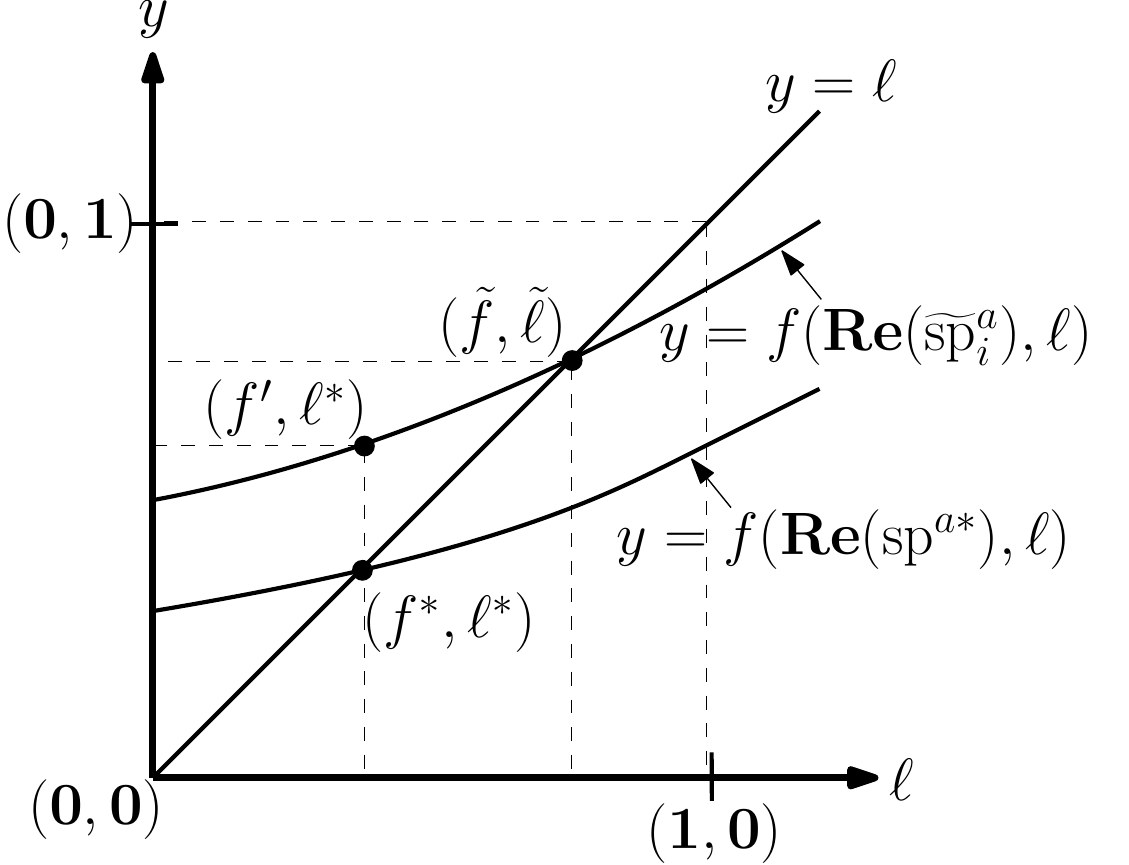}
\caption{Illustrative diagram showing how $\ell$ changes with $\sgseta$} \label{fig:currentShift}
\end{figure}

Note that from \eqref{eq:NPF}, one can show that for any $\ell$, $\f(\Re(\sgsetanew,\ell)) > \f(\Re(\sgsetastarnpf,\ell))$. Now, consider the $(j,k)^{th}$ entry of Jacobian $\textbf{J}_\f(\ell)$.
\begin{align*}
\begin{aligned}
\partial_{\ell_k} f_{j} &= \myFrac{\nu_i \big(2P_{j}\partial_{\ell_k}P_j + 2Q_{j}\partial_{\ell_k}Q_j\big)}{\nu_i^2} - \myFrac{(P_{j}^2+Q_{j}^2)\partial_{\ell_k} \nu_i}{\nu_i^2} 
\end{aligned}
\end{align*}
\begin{align*}
\begin{aligned}
\therefore \quad 0 \stackrel{\assmref{assm:noReversePowerFlows}}{\le} \partial_{\ell_k} f_{j} & \stackrel{\assmref{assm:lowImpedance}}{<} \myFrac{(2r_{k} + 2x_{k})}{\nu_i} + \myFrac{(4\rr_{ik}r_{k} + 4\xx_{ik}x_{k})}{\nu_i^2} \\
\implies \quad 0 \le \partial_{\ell_k} f_{j} & \stackrel{\assmref{assm:lowImpedance}}{<} (r_{k} + x_{k})(\sfrac{2}{\mu}+\sfrac{4}{\mu^2}) \quad  \stackrel{}{\le} 1 \\
\implies \quad 0 \le \partial_{\ell_k} f_{j} & < 1. 
\end{aligned}
\end{align*}

At $\ell = \textbf{0}$, $\f > \textbf{0}$, and each entry of Jacobian $\textbf{J}_\f(\ell)$ is positive and smaller than 1. Hence, $\f$ intersects the hyper-plane $y = \ell$, exactly once. Furthermore, $\f(\Re(\sgsetanew,\ell)) > \f(\Re(\sgsetastarnpf,\ell))$. Hence, we can conclude that $\ellcpf = \fcpf > \fnew > \fstar = \ellstarnpf$. 
\end{proof}

\begin{proof}[\textbf{Proof of \cref{prop:greedyOptimality}}]

Note that for fixed $\phi\in\Phi$, maximizing $\Losslpf(\xlpf(\aalpf,\phi))$ (resp. $\Lossupf(\xupf(\aalpf,\phi))$) is equivalent to maximizing $\llovr(\xlpf(\aalpf,\phi))$ (resp. $\llovr(\xupf(\aaupf,\phi))$). 
Let $\aastarlpf$ be the optimal solution to $\admplpf$. 

\textbf{Case (i).} $\llovr(\xlpf(\aastarlpf,\phi)) = 0$.  Then \Cref{algo:greedyAlgorithm} computes $\aastarlpf$ trivially, because $\llovr(\xlpf(\aastarlpf,\phi)) \ge  \llovr(\xlpf(\aalpf,\phi)) \ge 0$. Hence, $\llovr(\xlpf(\aalpf,\phi)) = \llovr(\xlpf(\aastarlpf,\phi)) = 0$. 

\textbf{Case (ii).} $\llovr(\xlpf(\aastarlpf,\phi)) > 0$.  
Let $\nulpf_j(\aa,\phi)$ denote the nodal voltage at node~$j$ after the attack $\aa$. Since, $\aalpf = \aaklpf$ , for some pivot node~$k \in \N$ (see \Cref{algo:greedyAlgorithm}), $\aaklpf$ maximally violates  \eqref{eq:voltageConstraint} over all $\aailpf$, i.e.,
\begin{equation}\label{eq:optimalityOfaaklpf}
\forall\;i\in\N,\quad \nuMin_k - \nulpf_k(\aaklpf,\phi) \ge \nuMin_i - \nulpf_i(\aailpf,\phi), 
\end{equation}
where $\aailpf$ is the optimal pivot node attack  as computed by \Cref{algo:greedyAlgorithm} for node~$i$, i.e.,
\begin{equation} \label{eq:optimalityOfaailpf}
\forall\;i\in\N,\;\forall\;\aa\in\Da \quad \nuMin_i - \nulpf_i(\aailpf,\phi) \ge \nuMin_i - \nulpf_i(\aa,\phi). 
\end{equation}

\noindent Let $i = \argmax_{j\in\N}\W_j (\nuMin_j-\nulpf_j(\aastarlpf,\phi))_+$.  Furthermore, since $ \llovr(\xlpf(\aastarlpf,\phi)) > 0$, 
\begin{equation} \label{eq:positiveLovr}
\llovr(\xlpf(\aastarlpf,\phi))  = \W_i (\nuMin_i-\nulpf_i(\aastarlpf,\phi))
\end{equation} \normalsize  
\begin{align*}
\begin{aligned}
\therefore \llovr(\xlpf(\aalpf, \phi)) & =  \llovr(\xlpf(\aaklpf,\phi))\\
&\stackrel{\eqref{eq:lovrDefinition}}{=} \mmax_{j\in\N}\W_j (\nuMin_j-\nulpf_j(\aaklpf,\phi))_+ \\
& \stackrel{\eqref{eq:optimalityOfaaklpf}}{\ge} \W_i(\nuMin_i-\nulpf_i(\aailpf,\phi))  \\ 
& \stackrel{\eqref{eq:optimalityOfaailpf}}{\ge} \W_i(\nuMin_i-\nulpf_i(\aastarlpf,\phi)) \\
& \stackrel{\eqref{eq:positiveLovr}}{=}  \llovr(\xlpf(\aastarlpf,\phi)). 
\end{aligned}
\end{align*}
 \normalsize 
Furthermore, for a fixed $\phi$ $\lvoll(\xlpf(\aalpf,\phi)) = \lvoll(\xlpf(\aastarlpf,\phi))$. Hence, 
$\quad 
\Losslpf(\xlpf(\aalpf,\phi)) \ge  \Losslpf(\xlpf(\aastarlpf,\phi))$. 
\end{proof}

\begin{proof}[\textbf{Proof of \Cref{lem:pvCompromiseEffects}}]
Let $\sgset_j$ be the DER set-point of node $j$ before the attack. If $\sgset_j$ is the pre-attack set-point, let $\Delta_j(\sgset_j)$ denote the change in the set-point of DER~$j$  after it is compromised. By \cref{thm:attackerSetpoints}, $\Delta(\sgset_j) = \sgset_j - \sgseta_j =  \sgset_j - (0 - \j\sgmax_j) = \sgset_j + \j\sgmax_j$; and by  linearity in \eqref{eq:recursiveVoltageSquareLpf},
\begin{align*}
\begin{aligned}
\Delta_j(\nu_i) = 2\Re(\conj{Z}_{ij}\Delta_j(\sgset_j)) = 2\Re(\conj{Z}_{ij}(\sgsetd_j + \j\sgmax_j)).
\end{aligned}
\end{align*}
Again, by invoking the linearity in \eqref{eq:recursiveVoltageSquareLpf}, \eqref{eq:sumVoltageSquareLPF} follows. 

\noindent Similarly, one can show \eqref{eq:recursiveUPFVoltageSquare} and \eqref{eq:sumVoltageSquareUPF}. 
\end{proof}

\begin{proof}[\textbf{Proof of \cref{lem:optimalAttackSetIndependentOfGamma}}]
The computation of $\Dastarlpf(\phi)$ depends on $\Delta_j(\nulpf_i)$ values which depend only on $\sgsetd$, and not on $\gammalpf$ (see \Cref{lem:pvCompromiseEffects}). 
\end{proof}

\begin{proof}[\textbf{Proof of \cref{prop:downstream}}]
When $\aa_j = 1$, i.e., the DER $j$ is compromised, only the power supplied at node $j$ changes. Using \eqref{eq:recursiveLPFVoltageSquare}, we get, 
\begin{align*}
\begin{aligned}
\therefore \Delta_j(\nulpf_i) &= 2\Re(\conj{\zz}_{ij}\Delta(\sgsetd_j)) \\
&= 2\Re(\conj{\zz}_{ij}(\sgsetd_j + \j\sgmax_j)). \\
\end{aligned}
\end{align*}

\begin{alignat*}{2}
\text{Now, } j \prec_i k &\implies \P_i\cap\P_j \subset \P_i\cap\P_k \implies \zz_{ij} < \zz_{ik}. 
 \end{alignat*}
\begin{small}
\begin{equation*}
\begin{split}
\therefore \Delta_j(\nulpf_i) &= 2\Re(\conj{\zz}_{ij}(\sgsetd_j+ \j\sgmax_j))\\
& < 2\Re(\conj{\zz}_{ik}(\sgsetd_k+ \j\sgmax_k)) = \Delta_k(\nulpf_i)
\end{split}
\end{equation*}
\end{small}
Similarly, we can prove the case for $j =_i k$. 

\noindent Under the $\UPF$ model, \small 
$\myEquationStyle
\Delta_j(\nuupf_i) = 2(1+\slrrr)\Re(\conj{\zz}_{ij}(\sgsetd_j+ \j\sgmax_j))$. \normalsize
The rest of the proof follows similarly. 
\end{proof}

\begin{remark}

 \Cref{prop:downstream} implies that, broadly speaking, compromising downstream DERs is advantageous to the attacker than compromising the upstream DERs. The following illustrative example suggests that compromising DERs by means of clustered attacks are more beneficial to the attacker than distributed attacks. 
 
 \begin{example}
 Consider the $\admplpf$ with $M = 2$ instantiated on the DN in  \cref{fig:precedenceDescription}. Assume that all loads and DERs are homogeneous, all lines have equal impedances, i.e., $\forall i\in\N, \sc_i = \sc_a$, $\sgsetd_i = \sgsetd_a$, $\sgmax_i = \sgmax_a$, $z_i = z_a$. By \Cref{prop:pvSetpoints}, the outputs of all the DERs are fixed and identical to each other. 
 
 Let $\alpha = 2(\Re(\conj{z}_a(\sc_a-\sgsetd_a)))$, and $\beta = 2(\Re(\conj{z}_a (\Re(\sgsetd_a) + \j (\Im(\sgsetd_a)+\sgmax_a))))$. Then $\nu$ values for different attack vectors are given in \Cref{tab:clusterAttacks}. The optimal attack compromises nodes $i$ and $m$, which is a cluster attack. 
 
 \begin{table}[h]
 \tiny
 \centering
 \begin{tabular}{|p{1.3cm}|c|c|c|}
 \hline
 Attacked Nodes & $\nu_m$ & $\nu_j$ & $\nu_k$\\
 \hline
  $\emptyset$& $\nu_0 - 23\alpha$ & $\nu_0 - 13\alpha$ & $\nu_0 - 20\alpha$ \\
 \rowcolor{lightgray}  $\{i,m\}$ & $\nu_0-23\alpha-9\beta$& $\nu_0-13\alpha-2\beta$& $\nu_0-20\alpha-4\beta$\\
  $\{j,m\}$ & $\nu_0-23\alpha-6\beta$& $\nu_0-13\alpha-4\beta$& $\nu_0-20\alpha-3\beta$\\
  $\{k,m\}$ & $\nu_0-23\alpha-7\beta$& $\nu_0-13\alpha-2\beta$& $\nu_0-20\alpha-6\beta$\\
  $\{g,j\}$ & $\nu_0-23\alpha-2\beta$& $\nu_0-13\alpha-5\beta$& $\nu_0-20\alpha-2\beta$\\
  $\{d,k\}$ & $\nu_0-23\alpha-4\beta$& $\nu_0-13\alpha-2\beta$& $\nu_0-20\alpha-7\beta$\\
  \hline
 \end{tabular}
 \caption{$\nu$ vs Different Attack Combinations.}\label{tab:clusterAttacks}
 \end{table}
 \normalsize
 \end{example}
 Consequently, our results (see \cref{sec:securityDesign}) on security strategy in Stage~1 show that the defender should utilize his security strategy to deter cluster attacks. 
 
\end{remark}

 \begin{proof}[\textbf{Proof of \cref{prop:generalApproach}}]
 For a fixed defender action $\phi$, we have from \eqref{eq:nulpfupfRelationship} that $\forall\ j\in\N, \Delta_j(\nuupf_i) = (1+\epsilon) \Delta_j(\nulpf_i)$. Hence, the sequence of partitions of the nodes for every pivot node is the same in both the LPF and the $\UPF$ model. 
 Hence, $\Dastarlpf(\phi) = \Dastarupf(\phi)$. 
 
 Now, for any $\psi_1,\psi_2 \in\Psi$, 
  \begin{align}
  \begin{aligned}
  \lvoll(\xlpf(\psi_1,\phi)) &= \lvoll(\xlpf(\psi_2,\phi)) \quad \text{ and }\\
  \lvoll(\xupf(\psi_1,\phi)) &= \lvoll(\xupf(\psi_2,\phi)). \label{eq:sameVollCost}
  \end{aligned}
 \end{align}
  Suppose $\psistarlpf$ is not an optimal solution to $\admpupf$. Then, 
  \begin{small}
  \begin{alignat*}{8}
  &\Lupf(\xupf(\psistarupf,\phi)) &&> \Lupf(\xupf(\psistarlpf,\phi))\\
  \stackrel{\eqref{eq:sameVollCost}}{\iff} & \llovr(\xupf(\psistarupf,\phi)) &&> \llovr(\xupf(\psistarlpf,\phi)) \\
  \iff & \max_{i\in\N}W_i(\nuMin_i - \nuupf_i(\psistarupf,\phi))_+ && > \max_{j\in\N}W_j(\nuMin_j - \nuupf_j(\psistarlpf,\phi))_+ \\
  \stackrel{\eqref{eq:positiveLovrCond}}{\iff} & \max_{i\in\N}(\nuMin - \nuupf_i(\psistarupf,\phi)) &&> \max_{j\in\N}(\nuMin - \nuupf_j(\psistarlpf,\phi)) \\
  \iff & \max_{i\in\N}(\nu_0 - \nuupf_i(\psistarupf,\phi)) &&> \max_{j\in\N}(\nu_0 - \nuupf_j(\psistarlpf,\phi)) \\
  \stackrel{\eqref{eq:nulpfupfRelationship}}{\iff} &   (1+\epsilon)\linfinityNorm{(\nu_0 - \nulpf_i(\psistarupf,\phi))} && > (1+\epsilon)\linfinityNorm{(\nu_0 - \nulpf_j(\psistarlpf,\phi))} \myEquationStyle \normalsize\\
  \stackrel{\eqref{eq:positiveLovrCond}}{\iff}  & \max_{i\in\N}W_i(\nuMin_i - \nulpf_i(\psistarupf,\phi))_+ &&> \max_{j\in\N}W_j(\nuMin_j - \nulpf_j(\psistarlpf,\phi))_+ \\
  \iff & \llovr(\xlpf(\psistarupf,\phi)) &&> \llovr(\xlpf(\psistarlpf,\phi)) \\
  \stackrel{\eqref{eq:sameVollCost}}{\iff} &\Llpf(\xlpf(\psistarupf,\phi)) &&> \Llpf(\xlpf(\psistarlpf,\phi)). 
  \end{alignat*}
  \end{small}
  Hence, the contradiction that $\psistarlpf$ is an optimal solution to $\admplpf$. 
  Similarly, we can show that $\psistarupf$ is an optimal solution to $\admpupf$. 
  \end{proof}

To prove \Cref{thm:optimalSecureDesign}, we first introduce \cref{prop:childNodeSecure,prop:design2Better,prop:type3secure}. 
\noindent Consider any security strategy $\ad\in \AD$ such that \small
\begin{equation}\label{eq:originalSecureDesign}\myEquationStyle
\ad = \begin{bmatrix}
\underbracket{\ad_1}_1 & \underbracket{\ad_2}_2 &  \dots & \underbracket{\hspace{3pt}1\hspace{3pt}}_a &  \dots &  \underbracket{\hspace{3pt}0\hspace{3pt}}_b & \dots &  \underbracket{\ad_\NN}_\NN
\end{bmatrix}.
\end{equation} \normalsize

\noindent \tcb{Construct $\adnew$ from $\ad$ by only flipping the bits at nodes $a$ and $b$ as follows:}
\begin{equation} \small 
\label{eq:newSecureDesign}\myEquationStyle
\adnew = \begin{bmatrix}
\underbracket{\ad_1}_1 & \underbracket{\ad_2}_2 &  \dots & \underbracket{\hspace{3pt}0\hspace{3pt}}_a &  \dots &  \underbracket{\hspace{3pt}1\hspace{3pt}}_b & \dots &  \underbracket{\ad_\NN}_\NN
\end{bmatrix},
\end{equation} \normalsize
i.e., $\adnew_i = \ad_i\quad\forall\quad i\in\N\backslash\{a,b\}$. Similarly, let $\aa \in \Da(u)$ such that $\aa_a = 0, \aa_b = 1$; and construct $\aanew$ from $\aa$ as in \eqref{eq:newSecureDesign} such that $\aanew_i = \aa_i\quad\forall\quad i\in\N\backslash\{a,b\}$. Note that, $\aanew \in \Da(\adnew)$.

We use \cref{prop:childNodeSecure,prop:design2Better,prop:type3secure} to compare the security strategies $\u$ and $\adnew$ under various conditions. Refer to \cref{fig:design2Tree} for the purpose of proofs of \cref{prop:childNodeSecure,prop:design2Better,prop:type3secure}.
\begin{proposition}\label{prop:childNodeSecure}
Assume \assmref{assm:throughout},  \assmref{assm:identicalRX}, \assmref{assm:balancedTree}. Let $\u\in\Dd$ (resp. $\adnew\in\Dd$) be as in \eqref{eq:originalSecureDesign} (resp. \eqref{eq:newSecureDesign}). If $b \in \Lambda_a$, then $\u \securePreceq \adnew$. 
\end{proposition}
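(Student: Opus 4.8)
The plan is to prove $u \securePreceq \adnew$ by comparing the worst‑case sub‑game losses $\Lulpf$ and $\Lulpfnew$ directly, through a budget‑preserving coupling of the two attacker feasible sets. The strategies $u$ and $\adnew$ agree on every coordinate except the swapped pair: under $u$ the node $b$ is attackable while $a$ is secured, and under $\adnew$ the node $a$ is attackable while $b$ is secured. Hence $\Da(u)$ and $\Da(\adnew)$ differ only in the single coordinate that is exchanged, and the natural device is the bit‑swap that fixes all coordinates outside $\{a,b\}$ and exchanges the bits at $a$ and $b$. This map preserves $\lzeronorm{\aa}\le\arcm$ and sends the vulnerable set of one strategy onto that of the other, so it is a bijection between $\Da(u)$ and $\Da(\adnew)$; I would use it to transport the worst‑case attack against one strategy to a feasible attack against the other.

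Next I would pin down the inner (Stage‑3) response so that the comparison is purely combinatorial. Under \assmref{assm:identicalRX} the optimal uncompromised set‑points are fixed to $\sgsetd=\sgsetdstarlpf$ by \Cref{prop:pvSetpoints}, and by \cref{lem:convexRelaxationOptimalSolution} the inner minimization is solved exactly, leaving only the load‑control vector $\gamma$ as Stage‑3 freedom. With $\sgsetd$ fixed, \Cref{lem:pvCompromiseEffects} lets me write the post‑attack voltage at any pivot $i$ additively as $\nulpf_i^0-\sum_{j:\aa_j=1}\Delta_j(\nulpf_i)$, where $\nulpf_i^0$ is the no‑attack value. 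The key consequence is that two attacks differing only by the $a\leftrightarrow b$ swap produce voltages at $i$ that differ by exactly $\Delta_b(\nulpf_i)-\Delta_a(\nulpf_i)$, whose sign is governed entirely by the location of $i$ relative to $a$ and $b$. Because $b\in\Lambda_a$, every pivot $i\notin\Lambda_a$ satisfies $a=_ib$, so the swap is neutral there; for pivots inside $\Lambda_a$ the precedence ordering of \Cref{prop:downstream} fixes the sign of the difference. This reduces the whole statement to a pivot‑by‑pivot comparison of the two voltage profiles.

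Finally I would assemble these pointwise comparisons into the loss inequality. Since $\llovr=\linfinityNorm{\W\emult(\nuMin-\nulpf)_+}$ is a maximum over pivots and the swap changes neither the set of controlled loads nor $\lvoll$ for a fixed $\gamma$, the defender's re‑optimization over $\gamma$ transfers identically to both sides, so it suffices to control $\llovr$ at the pivot attaining the worst‑case violation. Transporting the optimal attack through the swap map and invoking the sign of $\Delta_b(\nulpf_i)-\Delta_a(\nulpf_i)$ from \Cref{prop:downstream} yields the ordering $\Lulpf\le\Lulpfnew$, i.e.\ $u\securePreceq\adnew$; the $\UPF$ statement follows verbatim using the $(1+\slrrr)$‑scaled identities of \Cref{lem:pvCompromiseEffects}. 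The step I expect to be the main obstacle is precisely this sign bookkeeping: the swap acts favourably at some pivots and neutrally at others, so one must verify that the pivot realizing the worst‑case $\llovr$ is ordered in the required direction and that the argument is robust to ties at the \emph{same precedence level} (the $=_i$ case of \Cref{prop:downstream}); a secondary technicality is the budget boundary, where the constraint $\lzeronorm{\aa}\le\arcm$ may force the swapped coordinate in or out, which must be checked so that the transported attack remains feasible.
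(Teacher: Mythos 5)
Your proposal is correct and follows essentially the same route as the paper: a budget-preserving bit-swap of the attack at $a$ for one at $b$, the pointwise dominance $\Delta_a(\nulpf_i)\le\Delta_b(\nulpf_i)$ for every pivot $i$ from \Cref{prop:downstream} (using $a\preceq_i b$ since $b\in\Lambda_a$), and an interchange of defender responses to chain the loss inequalities (the paper handles this via a two-case split on whether the optimal attack against $\adnew$ targets $a$, which your bijection covers uniformly). The two obstacles you flag are in fact non-issues: the dominance holds at \emph{every} pivot, so no bookkeeping of which pivot attains the $\linfinityNorm{\cdot}$ is needed, and the swap preserves $\lzeronorm{\aa}$ exactly, so feasibility at the budget boundary is automatic.
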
 

\begin{proof}[\textbf{Proof of \cref{prop:childNodeSecure}}]
Let $(\aastar,\phistar)$ and $(\aastarnew,\phistarnew)$, denote the optimal solutions of $\adlpf$ with $\u = \ulpf$ (resp. $\u = \adnew$). \assmref{assm:identicalRX} $\implies \sgsetdstar$ is fixed (\cref{prop:pvSetpoints}). Hence, $\phistar$ depends only on $\aastar$, and not $\ad$. Then, let $\phistar(\aa)$ denote optimal defender response to $\aa$. We want to show $\Lulpfnew \le \Lulpf$. 

 \textbf{Case ${\aastarnew_a = 0}$.} Then $\aastarnew\in\Da(\ad)$. Thus,   
 \begin{align*}
\begin{aligned}
\Lulpfnew &= \Losslpf(\xlpf(\adnew,\aastarnew,\phistar(\aastarnew))) \\
 &= \Losslpf(\xlpf(\ad,\aastarnew,\phistar(\aastarnew))) \\
 & \le \Losslpf(\xlpf(\ad,\aastar,\phistar(\aastar))),
\end{aligned}
 \end{align*} where the inequality follows due to the optimality of $\aastar$. 
  
\textbf{Case ${\aastarnew_a = 1}$.} Let $\aa\in\Da(\ad):\aa_a = 0,\aa_b=1,\forall i\in\N\backslash \{a,b\}, \aa_i = \aastarnew_i$.  We have assumed that $b\in\Lambda_a$; see~\cref{fig:design2Tree}. Therefore, $\forall\;i\in\N,\; a \preceq_i b$. Hence, by \cref{prop:downstream}, 
\begin{equation*}
\forall\; i\in\N,\;\Delta_b(\nulpf_i) \ge \Delta_a(\nulpf_i). 
\end{equation*}  
Then, by \cref{lem:pvCompromiseEffects}, for fixed $\phi$,  
$\Delta_{\aa}(\nulpf) \ge \Delta_{\aastarnew}(\nulpf)$. Hence,
\begin{align*}
\begin{aligned}
\Lulpfnew &= \Losslpf(\xlpf(\adnew,\aastarnew,\phistar(\aastarnew))) \\
 & \le \Losslpf(\xlpf(\adnew,\aastarnew,\phistar(\aa))) \\
 & \le \Losslpf(\xlpf(\ad,\aa,\phistar(\aa))) \\
 & \le \Losslpf(\xlpf(\ad,\aastar,\phistar(\aastar))) \\
 & = \Lulpf. 
\end{aligned}
\end{align*} 
Here, the first (resp. last) inequality follows due to optimality of $\phistar(\aastarnew)$ (resp. $\aastar$). Hence, $\ad \securePreceq \adnew$. 
\end{proof}

\begin{remark}
\noindent Starting with any strategy $\u'\in\Dd$, \cref{prop:childNodeSecure} can be applied recursively to obtain a more secure strategy $\ad\in\Dd : \u' \securePreceq \ad$, which has the property that if a node~$i$ is secure, then all its successor nodes (i.e. all nodes in subtree $\Lambda_i$) are also secured by the defender, i.e., 
 \begin{equation}\label{eq:childNodeSecure}
 \forall\;i\in\N,\;\ad_i = 1 \implies \forall\;j\in\Lambda_i,\;\ad_j = 1.
 \end{equation}

\end{remark}

\begin{proposition}\label{prop:design2Better}
Assume \assmref{assm:throughout}, \assmref{assm:identicalRX}, \assmref{assm:balancedTree}. Let $\u\in\Dd$ (resp. $\adnew\in\Dd$) be as in \eqref{eq:originalSecureDesign} (resp. \eqref{eq:newSecureDesign}). Let $A_\ad = \{(i,j) \in \N\times\N\; |\; \ad_i = 1, \ad_j = 0, \level_i \ge \level_j+1 \}$. If $\ad$ satisfies \eqref{eq:childNodeSecure}, and $(a,b) \in \argmax_{(i,j)\in A_\ad} \abs{\P_i\cap\P_j}$, then $\u \securePreceq \adnew$.  
\end{proposition}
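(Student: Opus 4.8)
The plan is to follow the template of the proof of \cref{prop:childNodeSecure}. First I would reduce to the LPF model (the $\UPF$ case being analogous) and invoke \assmref{assm:identicalRX} together with \cref{prop:pvSetpoints} to fix the defender set-points at $\sgsetdstar$; consequently the optimal Stage~3 response depends only on the attack, so I write $\phistar(\aa)$ for the optimal response to an attack $\aa$ and regard $\Losslpf(\xlpf(\ad,\aa,\phistar(\aa)))$ as a function of $\aa$ alone. Let $(\aastarnew,\phistarnew)$ be an optimal solution of $\adlpf$ with $\ad=\adnew$. Since $\ad$ and $\adnew$ agree off $\{a,b\}$ and $b$ is secured under $\adnew$ (so $\aastarnew_b=0$), it suffices to produce an attack $\aa\in\Da(\ad)$ with $\Losslpf(\xlpf(\ad,\aa,\phistar(\aa)))\ge\Lulpfnew$, which yields $\Lulpfnew\le\Lulpf$, i.e. $\ad\securePreceq\adnew$.

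The argument then splits on whether the optimal attack against $\adnew$ uses node~$a$. If $\aastarnew_a=0$, then $\aastarnew$ compromises neither $a$ nor $b$, hence $\aastarnew\in\Da(\ad)$, and optimality of $\aastar$ for $\ad$ gives $\Lulpfnew=\Losslpf(\xlpf(\ad,\aastarnew,\phistar(\aastarnew)))\le\Losslpf(\xlpf(\ad,\aastar,\phistar(\aastar)))=\Lulpf$. The substantive case is $\aastarnew_a=1$, and here the symmetry hypothesis \assmref{assm:balancedTree} and the maximality of $\abs{\P_a\cap\P_b}$ are essential: unlike in \cref{prop:childNodeSecure}, the simple swap $a\mapsto b$ fails because $b$ lies at a strictly shallower level than $a$, so $\Delta_b(\nulpf_i)$ need not dominate $\Delta_a(\nulpf_i)$.

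To handle this case I would let $c$ be the least common ancestor of $a$ and $b$ and use the symmetric isomorphism $\sigma$ between the two child-subtrees of $c$ (one carrying the branch to $a$, the other the branch to $b$), which exists by \assmref{assm:balancedTree}, to define $a'\coloneqq\sigma(a)$. The key observation is that $a'$ is \emph{vulnerable} under $\ad$: were $\ad_{a'}=1$, then, since $a'$ and $b$ lie in a common child-subtree of $c$, the pair $(a',b)$ would satisfy $\ad_{a'}=1$, $\ad_b=0$, $\level_{a'}=\level_a\ge\level_b+1$, and $\abs{\P_{a'}\cap\P_b}>\abs{\P_a\cap\P_b}$, contradicting $(a,b)\in\argmax_{(i,j)\in A_\ad}\abs{\P_i\cap\P_j}$. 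Hence $a'\in\N\setminus\Ns(\ad)$ is available to the attacker against $\ad$. I then build $\aa$ from $\aastarnew$ by replacing the compromise of $a$ with that of $a'$; since $\level_{a'}=\level_a$ and $a'\ne b$, we have $\aa\in\Da(\ad)$ with $\lzeronorm{\aa}=\lzeronorm{\aastarnew}$. Because $\sigma$ preserves line impedances and nominal data, the expressions $\Delta_j(\nulpf_i)=2\Re(\conj{Z}_{ij}(\sgsetd_j+\j\sgmax_j))$ of \cref{lem:pvCompromiseEffects} transform consistently under the reflection, so by the additivity in \cref{lem:pvCompromiseEffects} and \cref{prop:greedyOptimality} the worst-case voltage violation (hence $\llovr$, and with $\sgsetd$ fixed and $\gamma$ optimized by an LP, also $\Losslpf$) produced by $\aa$ against $\ad$ is at least that produced by $\aastarnew$ against $\adnew$.

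The main obstacle is precisely this last step: certifying that the symmetric substitution $a\mapsto a'$ does not decrease the loss. This requires matching the optimal pivot node of the $\adnew$-attack with a symmetric pivot for the $\ad$-attack, and separately treating pivots inside $\Lambda_c$ (where $\sigma$ acts) from pivots at or above $c$, for which $\Delta_a(\nulpf_i)=\Delta_{a'}(\nulpf_i)$ because $\P_i\cap\P_a$ and $\P_i\cap\P_{a'}$ both reduce to $\P_i\cap\P_c$. I also expect to need a short separate treatment of the degenerate configuration in which $b$ is an ancestor of $a$ (so $c=b$ and the two branches coincide): there the reflection target must be chosen among the other children of $b$, and one again appeals to \cref{prop:downstream} and the maximality of $\abs{\P_a\cap\P_b}$ to exhibit a vulnerable, equal-level twin of $a$ in $\ad$. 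With this in hand, the per-pivot comparison of greedy attacks from \cref{prop:greedyOptimality} closes the case, and the $\UPF$ statement follows from the corresponding lines of \cref{lem:pvCompromiseEffects}.
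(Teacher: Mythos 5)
Your high-level template is the same as the paper's (fix the defender set-points via \assmref{assm:identicalRX} and \cref{prop:pvSetpoints}, split on whether $\aastarnew_a=0$, and in the nontrivial case exhibit an attack in $\Da(\ad)$ whose loss dominates $\Lulpfnew$), and your maximality argument that the mirror node $a'=\sigma(a)$ must be vulnerable under $\ad$ is correct — indeed the paper needs the same observation to claim that every vulnerable node of $\Lambda_{i'}$ has a vulnerable homomorphic twin in $\Lambda_{i''}$. Where you diverge is the substitution itself: the paper replaces the compromise of $a$ by the compromise of $b$ and then argues, using that twin-by-twin vulnerability comparison, that the attacker's optimal pivot is never in $\Lambda_{i'}$ (the child-subtree of the least common ancestor $c$ containing $a$); for a pivot $i\in\Lambda_{i''}$ one has $a\prec_i b$ and hence $\Delta_a(\nulpf_i)<\Delta_b(\nulpf_i)$ by \cref{prop:downstream}, and for a pivot outside $\Lambda_{i'}\cup\Lambda_{i''}$ one has $a=_i b$ and equality. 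You instead replace $a$ by $a'$ and try to transfer the loss by reflection.

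That is exactly where the gap sits, and the fix you sketch does not close it. For an optimal pivot $i^*\in\Lambda_{i'}$ of the $\adnew$-attack you propose comparing the new attack at the reflected pivot $\sigma(i^*)\in\Lambda_{i''}$. The reflection preserves the swapped node's contribution, $\Delta_{a'}(\nulpf_{\sigma(i^*)})=\Delta_a(\nulpf_{i^*})$, but not the contributions of the \emph{other} compromised nodes of $\aastarnew$ lying in $\Lambda_{i'}$: for such a node $k$ one has $\P_{\sigma(i^*)}\cap\P_k=\P_c$ while $\P_{i^*}\cap\P_k\supseteq\P_{i'}\supsetneq\P_c$, so by \cref{lem:pvCompromiseEffects} $\Delta_k(\nulpf_{\sigma(i^*)})<\Delta_k(\nulpf_{i^*})$, and the per-pivot comparison can fail outright (e.g., $\arcm=2$ with the second compromised node deep in $\Lambda_{i'}$ near $i^*$). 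Reflecting a single node of the attack while the pivot's side remains populated by unreflected compromised nodes is not a symmetry of the configuration; what is needed — and what the paper supplies — is the argument that a pivot in $\Lambda_{i'}$ is never optimal because the entire $\Lambda_{i'}$-portion of any attack can be reflected into the at-least-as-vulnerable $\Lambda_{i''}$, after which the comparison of $a$ against $b$ (or $a'$) goes through. Two smaller loose ends: your claim $\lzeronorm{\aa}=\lzeronorm{\aastarnew}$ fails if $a'$ is already compromised by $\aastarnew$; and the degenerate case $b\in\P_a$ (so $c=b$ and there is no second child-subtree) that you flag is indeed left open by your argument — though, to be fair, the paper's own proof also silently assumes $a$ and $b$ lie in distinct child-subtrees of $c$.
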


\begin{proof}[\textbf{Proof of \cref{prop:design2Better}}] 
Let $c = \argmax_{(i\in\P_a\cap \P_b)} \level_i$, be the lowest common ancestor of $a$ and $b$. Let $i', i''\in\child_c\; :\;a\in\Lambda_{i'}$ and $b\in\Lambda_{i''}$. 
From \cref{thm:greedyApproach}, we know that the optimal attack $\aastar$ will be a pivot node attack $\aailpf$ for some node, say $i\in\N$. Let $\N' = \Lambda_{i'}\cup \Lambda_{i''}$. 

\textbf{Case $i\in\N'$.} Now $\ad_j = 1\; \forall\; j \in \Lambda_{i'}\backslash\Lambda_{i'}^a \cup\{a\}$ by maximality of $\abs{\P_a\cap\P_b}$. Similarly, $\ad_j = 0 \; \forall\; j \in \Lambda_{i''}^d\cup\{b\}$. Thus, $\forall\;j \in \Lambda_{i'} \text{ s.t. }\ad_j = 0$ there exists a separate node~$k \in \Lambda_{i''}$ such that $j$ and $k$ are homomorphic, and $\ad_k = 0$ (see \cref{fig:design2Tree}). Hence, the subtree $\Lambda_{i''}$ is more vulnerable than the subtree $\Lambda_{i'}$, and it will be more beneficial  for the attacker to target a pivot node in $\Lambda_{i''}$. Now, $i\in\Lambda_{i''}$, and $\forall\;i\in\Lambda_{i''},\; a \prec_i b$. Hence, by using  \cref{prop:downstream}, we get, $\Delta_a(\nulpf_i) <  \Delta_b(\nulpf_i)$. 

\textbf{Case $i\notin \N'$.} Then $a =_i b$, and by \cref{prop:downstream}, we have  $\Delta_a(\nulpf_i) = \Delta_b(\nulpf_i)$. 

We now want to show that $\Lulpfnew \le \Lulpf$. The rest of the proof is similar to the proof of \cref{prop:childNodeSecure}. 
%
\end{proof}

\begin{remark}
\noindent Again, starting with any strategy $\u'\in\Dd$, we can apply \cref{prop:design2Better} recursively to obtain a more secure strategy $\ad\in\Dd : \u' \securePreceq \ad$, in which, if a node is secure, then all nodes in lower levels are also secured by the defender, i.e.,
\begin{equation}\label{eq:design2Better}
\forall\;i,j\in\N,\; (\u_i = 1\text{ and }\level_j > \level_i) \implies \u_j=1. 
\end{equation} 
Thus, \cref{prop:design2Better} is a generalization of \cref{prop:childNodeSecure}. 
\end{remark}

\begin{proposition}
\label{prop:type3secure}
Assume \assmref{assm:throughout}, \assmref{assm:identicalRX}, \assmref{assm:balancedTree}. Let $\u \in \Dd$ be such that $\ad$ satisfies \eqref{eq:design2Better}. Let $\level' = \argmin_{(\exists\ a\in \N_{\level}:\u_a = 1)}\level$. If the  secure nodes on level $\level'$ are uniformly distributed over the level $\level'$, i.e., 
$\abs{\child_j\cap\Ns} \in \{T, T+1\}, \;\forall\;j \in \N_{\level'}, $ 
where $T \in \Z_+$, then $\u$ is an optimal security strategy, i.e., $\forall\;\adnew \in \Dd,\; \adnew \securePreceq \u$. 
\end{proposition}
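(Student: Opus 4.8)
The plan is to establish optimality of $\u$ by reducing an arbitrary competitor $\adnew\in\Dd$ to the canonical bottom-up, uniform form of $\u$ through a sequence of loss-non-increasing bit swaps, and then arguing that, among all strategies already in the bottom-up form, the uniform one is the most secure. Throughout I use that, by \cref{prop:greedyOptimality} and \Cref{thm:greedyApproach}, for a fixed security strategy the optimal attack is a pivot-node attack; hence comparing two strategies reduces to comparing, over all pivot nodes $i\in\N$, the largest voltage violation the attacker can inflict by compromising $M$ vulnerable DERs optimally for that pivot.

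First I would invoke the two structural reductions already available. Given any $\adnew\in\Dd$ with $\lzeronorm{\adnew}=B$, the remark following \cref{prop:childNodeSecure} lets me repeatedly move protection from a secured ancestor to a vulnerable descendant, yielding a strategy satisfying \eqref{eq:childNodeSecure} that is at least as secure; the remark following \cref{prop:design2Better} then pushes protection to the deepest available levels, producing $\adnew'$ satisfying \eqref{eq:design2Better} with $\Llpf^{\adnew'}\le\Lulpfnew$ and the same budget. Since each reduction flips exactly one secured and one vulnerable bit, feasibility and the budget are preserved. Consequently all nodes strictly below a threshold level $\level'$ are secured, all nodes above $\level'$ are vulnerable, and only level $\level'$ is partially secured, so the sole remaining degree of freedom is how the secured nodes are distributed among the parents in $\N_{\level'-1}$.

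Next I would compare $\adnew'$ with $\u$ by an exchange argument driven by \assmref{assm:balancedTree}: under symmetry all subtrees rooted at a common level are isomorphic with identical parameters, so two parents $p,q\in\N_{\level'-1}$ host interchangeable subtrees. If $\adnew'$ is not uniform, then $p$ has at least two more secured children on level $\level'$ than $q$; I would move one secured child from $p$ to $q$ and show the optimal attacker loss does not increase. By \cref{prop:downstream}, the impact $\Delta_j(\nulpf_i)$ of compromising $j$ on a pivot $i$ depends only on the common-path impedance, so the attacker always prefers to cluster its budget in the region leaving the most vulnerable deep DERs, and balancing $p$ and $q$ shrinks the largest such cluster. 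Formally I would argue that the worst-case violation is a Schur-convex function of the vector of per-parent vulnerable counts, hence minimized at the majorization-minimal (uniform) distribution; iterating the exchange turns $\adnew'$ into $\u$ without ever increasing the loss, giving $\Lulpf\le\Llpf^{\adnew'}\le\Lulpfnew$, i.e. $\adnew\securePreceq\u$. The $\UPF$ case follows identically using the $\UPF$ versions of \cref{prop:downstream,prop:greedyOptimality}.

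The main obstacle is this last step: proving that balancing secured nodes across symmetric parents cannot help the attacker. The difficulty is that when the attacker's optimal pivot sits in the region being made safer it may migrate to the region being made more vulnerable, so a naive pointwise comparison fails. The clean route is to establish Schur-convexity of the worst-case violation in the per-parent vulnerable-count vector, combining the additive decomposition of \cref{lem:pvCompromiseEffects} with the observation that, within each isomorphic subtree, the sum of the $M$ largest values $\Delta_j(\nulpf_i)$ the attacker can collect is a concave, saturating function of the number of DERs that subtree leaves vulnerable. Verifying this concavity/saturation carefully, and checking that it is preserved under the outer maximum over pivot nodes, is where the real work lies.
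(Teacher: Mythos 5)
Your overall architecture --- reduce an arbitrary competitor to the bottom-up form via \cref{prop:childNodeSecure} and \cref{prop:design2Better}, then balance the secured nodes on the single partially-secured level by repeated swaps --- is exactly the architecture the paper uses (its own proof of this proposition is literally ``similar to the proof of \cref{prop:design2Better}'', with the global claim assembled by chaining the three propositions in the proof of \cref{thm:optimalSecureDesign}). The divergence, and the gap, is in how the single balancing swap is justified. You reduce it to Schur-convexity of the worst-case violation in the per-parent vulnerable-count vector, which in turn rests on a claimed concavity/saturation of the sum of the $\arcm$ largest $\Delta_j(\nulpf_i)$ as a function of how many DERs a subtree leaves vulnerable; you then explicitly state that verifying this is ``where the real work lies'' and do not do it. That claim is not obviously true: the marginal value of releasing one more vulnerable node depends on where in the subtree it sits relative to the pivot, the attacker's budget $\arcm$ is shared globally rather than allocated per subtree, and the outer maximum over pivot nodes does not automatically preserve concavity. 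As written, the proposal therefore replaces the statement to be proved with an unproven lemma of comparable difficulty.

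The paper closes this step by a lighter, more combinatorial argument that you could adopt. For a single swap (secure $b$, unsecure $a$, with $a$ and $b$ lying in sibling subtrees $\Lambda_{i'}$ and $\Lambda_{i''}$ of their lowest common ancestor), one does a case split on the location of the optimal pivot node $i$: if $i \notin \Lambda_{i'}\cup\Lambda_{i''}$, then $a =_i b$ and \cref{prop:downstream} makes the two strategies indistinguishable at that pivot; if $i \in \Lambda_{i'}\cup\Lambda_{i''}$, a count of vulnerable nodes shows that every vulnerable node of the better-protected subtree has a homomorphic vulnerable counterpart in the worse-protected one, so the attacker's best pivot may be taken inside the more vulnerable subtree, where $a \prec_i b$ and \cref{prop:downstream} gives $\Delta_a(\nulpf_i) \le \Delta_b(\nulpf_i)$. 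One then injects any attack feasible against the balanced strategy into an attack at least as damaging against the unbalanced one, exactly as in the second case of the proof of \cref{prop:childNodeSecure}. This handles your ``pivot migration'' worry directly, with no concavity claim; substituting this injection argument for the Schur-convexity step would complete your proof along the paper's lines.
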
 
\begin{proof}[\textbf{Proof of \cref{prop:type3secure}}] 
\ifoldProofPropTypeThreeSecure 
Consider $i \in \Lambda_{i'}, j\in\Lambda_{i''}$ such that $j\in\N_i$. Now, if $k\in\N$ such that $\aa_k = 1$, then $k\in \Lambda_{i''}$ or $k \not\in \Lambda_{i'}\cup \Lambda_{i''}$. If $k\in \Lambda_{i''}$, then $\Delta_k(\nu_j) > \Delta_k(\nu_i)$ by \cref{prop:downstream}. Again, by \cref{prop:downstream}, if $k \not\in \Lambda_{i'}\cup \Lambda_{i''}$ then $\Delta_k(\nu_j) = \Delta_k(\nu_i)$. Thus, $\Delta_{\aa}(\nu_j) \ge \Delta_{\aa}(\nu_i)$. So, in any attack $\Lambda_{i''}$ will be more impacted by $\Lambda_{i'}$. So, securing $b$ If  This is because compromise of any PV outside $\Lambda_{i'}\cup \Lambda_{i''}$ will have equal impact on $\nu_i$ and $\nu_j$. And, compromise of

Now, suppose, in a worst-case attack, some node~$j \in \Lambda_{i'}\cup \Lambda_{i''}$ is targeted. Let $p$ denote the parent of node $b$. Let $\Lambda_m^j$ denote the subtree rooted at $m$ until the depth of the node~$j$. 

\begin{claim}\label{clm:atLeastChoice}$k \in \Nv^1 \quad\forall\quad k\in \Lambda_{i''}^{p}$
\end{claim}
Suppose, \cref{clm:atLeastChoice} is false. Then $\exists k\in \Lambda_{i''}^{p}$ such that $k\in \Nv^1$. But, clearly, $\abs{\P_k\cap\P_b} > \abs{\P_a\cap\P_b}$, violating the maximality assumption on choice of nodes $a$ and $b$. 

\begin{claim}\label{clm:atMostChoice}$
k \in \Ns^1 \quad\forall\quad k\in \Lambda_{i'}, k\notin\Lambda_{i'}^a, k \ne a$
\end{claim}
Again, by contradiction, we can show that if $\exists k\in \Lambda_{i'}, k\notin\Lambda_{i'}^a$ such that $k\in \Nv^1$, then choosing $b = k$ will violate the maximality of $ \abs{\P_a\cap\P_b}$. 

Let $n' = \abs{\Lambda_{i'}^a} = \Lambda_{i''}^p$. By \cref{clm:atLeastChoice}, at least $n'+1$ nodes are vulnerable in the cluster $\Lambda_{i''}$, whereas by claim~\ref{clm:atMostChoice}, at most $n'-1$ nodes are vulnerable in the cluster $\Lambda_{i'}$. Also, any node in $\Lambda_{i'}$, that is vulnerable, has a homomorphic counterpart in $\Lambda_{i''}$. Hence, if in a worst-case attack, the node with least voltage lies in $\Lambda_{i'}\cup\Lambda_{i''}$, it will be more lucrative for the attacker to target a node in $\Lambda_{i''}$ instead of $ \Lambda_{i'}$. Since, $\Delta_a(\nu_j) < \Delta_b(\nu_j)\quad\forall\quad j\in\N_0,j\in\Lambda_{i}$. Hence, it is better to secure $b$ instead of $a$. 
\else 
Similar to the proof of \cref{prop:design2Better}. 
\fi 
\end{proof}


\begin{remark}

\noindent \cref{prop:type3secure} implies that there exists an optimal security strategy in which there is a top-most level with DER nodes that are uniformly chosen for security investment, while all the lower levels are fully secure. 

\Cref{prop:childNodeSecure,prop:design2Better} capture the attacker preference for the downstream DERs, whereas  \cref{prop:type3secure} capture the attacker preference for cluster attacks. Hence, the optimal security strategy has distributed secured nodes. 

\end{remark}

\begin{proof}[\textbf{Proof of \Cref{thm:optimalSecureDesign}}]
Let $\adstaronelpf \in \Dd$ be any optimal security strategy. From $\adstaronelpf$, by sequentially applying \cref{prop:childNodeSecure}, \cref{prop:design2Better}, and \cref{prop:type3secure}, we can obtain an optimal security strategy $\adstartwolpf$ that satisfies \eqref{eq:childNodeSecure}, \eqref{eq:design2Better}, and has the top-most level with secure nodes having uniformly distributed secured  nodes. 

Now, let $\adstarlpf$ be the output of \cref{algo:securityAlgorithm}. Since in \cref{algo:securityAlgorithm}, nodes are secured from the leaf nodes to the root node level-by-level, $\adstarlpf$ also satisfies \eqref{eq:childNodeSecure} and \eqref{eq:design2Better}. The  \cref{algo:securityAlgorithm} also secures the top-most level with secure nodes with uniformly distributed secured nodes, $\adstarlpf$ is the same as $\adstartwolpf$  upto a homomorphic transformation. 

Finally, we argue that under \assmref{assm:throughout}-\assm{\ref{assm:balancedTree}}, $\adstarlpf$ can be combined with previous results to obtain full solution of $\dadlpf$. Under   \assmref{assm:identicalRX}, the defender set-points are fixed. Since, $\ulpf$ and $\sgsetdstarlpf$ are both fixed, we can compute the set of candidate optimal attack vectors $\Dastarlpf$, by considering only vulnerable DERs. Then for a fixed $\aa\in\Dastarlpf$, the sub-problem $\adsplpf$ reduces to an LP in $\gamma$. Hence,   \Cref{algo:greedyAlgorithm2} solves for $(\psistarlpf,\phistarlpf)$, the optimal solution of $\adlpf$ for $\ad = \ulpf$, by iterating over $\aa \in \Dastarlpf$. The strategy profile $(\adstarlpf, \psistarlpf, \phistarlpf)$, thus obtained, is an optimal solution to for DNs that satisfy  \assmref{assm:throughout}, \assmref{assm:identicalRX}, \assmref{assm:balancedTree}. Similarly, we can solve $\dadupf$. 
\end{proof}

\begin{remark}
We revisit the security strategies $\adone$ and $\adtwo$ in \cref{fig:designs}: which one is better? Firstly, we use symmetricity \assmref{assm:balancedTree} to argue that securing nodes 2, 4, 5 is equivalent to securing nodes 3, 6, 7. Then,  $\Lambda_3$ subtree of $\adtwo$ has more distributed secured nodes than $\Lambda_2$ in $\adone$. Hence, strategy 2 is better. \Cref{thm:optimalSecureDesign} will, of course, give the optimal security strategy $\adstarlpf$ in which nodes $\Ns(\adstarlpf) = \{8,9,10,12,13,14\}$, or other homomorphic strategies of $\adstarlpf$. 

\end{remark}

\fi 

\end{document}